\documentclass[11pt]{amsart}
\usepackage[utf8]{inputenc}

\setcounter{secnumdepth}{3}
\setcounter{tocdepth}{3}

\usepackage{mathtools}
\usepackage{amssymb}
\usepackage{tikz}
\usetikzlibrary{cd}
\usetikzlibrary{decorations.pathmorphing}
\usepackage{xcolor}
\usepackage{ragged2e}
\usepackage[style=alphabetic,backend=bibtex, maxbibnames=99]{biblatex}
\renewbibmacro{in:}{}
\addbibresource{bhkmn_biblio}

%================= colours
\usepackage{xcolor}
\definecolor{MyBlue}{RGB}{0,0,255}

\definecolor{MyRed}{RGB}{255,0,0}

\definecolor{MyPink}{RGB}{170,51,106}

%==================

\theoremstyle{plain}
\newtheorem{proposition}{Proposition}
\newtheorem{theorem}{Theorem}
\newtheorem{lemma}{Lemma}
\newtheorem{corollary}{Corollary}
\theoremstyle{remark}
\newtheorem{remark}{Remark}
\newtheorem{example}{Example}

\newtheorem{definition}{Definition}

\newcommand{\CC}{\mathbb{C}}
\newcommand{\RR}{\mathbb{R}}

\newcommand{\LL}{\mathcal{L}}

\renewcommand{\aa}{\mathfrak{a}}

\newcommand{\kk}{\mathfrak{k}}

\renewcommand{\ss}{\mathfrak{s}}
\renewcommand{\tt}{\mathfrak{t}}

\newcommand{\uu}{\mathfrak{u}}

\DeclareMathOperator{\Ad}{Ad}
\DeclareMathOperator{\ad}{ad}
\DeclareMathOperator{\Conv}{Conv}

\DeclareMathOperator{\Hess}{Hess}

\DeclareMathOperator{\tr}{tr}

\title[Mabuchi rays on symmetric spaces]{Fibering polarizations and Mabuchi rays on symmetric spaces of compact type}
\author{T.~Baier}
\address{Thomas~Baier\\Center for Mathematical Analysis, Geometry and Dynamical Systems, Instituto Superior T\'ecnico, Lisbon, Portugal.}
\email{thomas.baier@tecnico.ulisboa.pt}

\author{A.~C.~Ferreira}
\address{Ana Cristina Ferreira\\Centro de Matem\'atica, Universidade do Minho, Braga, Portugal.}
\email{anaferreira@math.uminho.pt}

\author{J.~Hilgert}
\address{Joachim~Hilgert\\Department of Mathematics, Paderborn University, Paderborn, Germany.}
\email{joachim.hilgert@upb.de}

\author{J.M.~Mourão}
\address{Jos\'e~M.~Mourão\\Department of Mathematics and Center for Mathematical Analysis, Geometry and Dynamical Systems, Instituto Superior T\'ecnico, Lisbon, Portugal.}
\email{jmourao@tecnico.ulisboa.pt}

\author{J.~P.~Nunes}
\address{Jo\~ao~P.~Nunes\\Department of Mathematics and Center for Mathematical Analysis, Geometry and Dynamical Systems, Instituto Superior T\'ecnico, Lisbon, Portugal.}
\email{jpnunes@tecnico.ulisboa.pt}

\date{\today}

\begin{document}

\begin{abstract}
In this paper, we describe holomorphic quantizations of the cotangent bundle of a symmetric space of compact type $T^*(U/K)\cong U_\CC/K_\CC$, along Mabuchi rays of $U$-invariant K\"ahler structures. At infinite geodesic time, the K\"ahler polarizations converge to a mixed polarization $\mathcal{P}_\infty$. We show how a generalized coherent state transform relates the quantizations along the Mabuchi geodesics such that holomorphic sections converge, as geodesic time goes to infinity, to distributional $\mathcal{P}_\infty$-polarized sections. Unlike in the case of $T^*U$, the gCST mapping from the Hilbert space of vertically polarized sections are not asymptotically unitary due to the appearance of representation dependent factors associated to the isotypical decomposition for the $U$-action. In agreement with the general program outlined in \cite{baier.hilgert.kaya.mourao.nunes:2023}, we also describe how the quantization in the limit polarization $\mathcal{P}_\infty$ is given by the direct sum of the quantizations for all the symplectic reductions relative to the invariant torus action associated to the Hamiltonian action of $U$. 

\end{abstract}

\maketitle

\tableofcontents

\section{Introduction}
\label{sect-introd}

In this paper, we pursue the program outlined in \cite{baier.hilgert.kaya.mourao.nunes:2023} for the geometric quantization of K\"ahler manifolds with an Hamiltonian action of a compact Lie group $U$, by considering  the case of the cotangent bundle of a compact symmetric space,
$$
T^*(U/K) \cong U_\CC / K_\CC.
$$
The main idea is to use Mabuchi rays of K\"ahler structures, generated by the Hamiltonian flows in imaginary time of  Hamiltonian functions which are convex in the moment map, to relate families of holomorphic quantizations to the quantization in a mixed polarization which is attained at infinite Mabuchi geodesic time.

The case of the cotangent bundle $T^*U$ of a Lie group $U$ of compact type  has been studied in \cite{hall:2002, kirwin.mourao.nunes:2013, kirwin.mourao.nunes:2014, baier.hilgert.kaya.mourao.nunes:2023}. The related case of symplectic toric manifolds has been studied in \cite{baier.florentino.mourao.nunes:2011, kirwin.mourao.nunes:2016}
and for recent developments in K\"ahler manifolds with $T$-symmetry see \cite{leung.wang:2022, leung.wang:2023a, leung.wang:2023b}. Applications to the case of flag manifolds are explored in \cite{hamilton.konno:2014} and for more general algebraic varieties in \cite{hamilton.harada.kaveh:2016}

In Section \ref{sect-prelim}, we describe Mabuchi rays of $U$-invariant K\"ahler structures on $T^*(U/K)$, obtained by symplectic reduction from invariant K\"ahler structures on $T^*U$.
These geodesic rays are generated by the Hamiltonian flow in imaginary time of convex functions on $\uu^*$ which are compatible with the symmetric space involution.
At infinite geodesic time along the Mabuchi rays, in Section \ref{sect-mabuchirays}, we obtain the mixed polarization $\mathcal{P}_\infty$ on the open dense subset of regular values of the moment map for the right $K$-action on $T^*U$, $T^*(U/K)_\mathrm{reg}$. In Section \ref{sect-extendedquantumbundle}, we show how a generalized coherent state transform relates the K\"ahler quantizations of $T^*(U/K)$ along the Mabuchi rays such that, in the limit of infinite geodesic time, the elements of natural basis of holomorphic sections, given by the isotypical decomposition with respect to the $U$-action, converge to distributional polarized sections for $\mathcal{P}_\infty.$ In Section \ref{subsection-hinfinity}, we show that independently of the Mabuchi ray which one follows, connecting the vertical, or Schr\"odinger, polarization to $\mathcal{P}_\infty$, there is a well-defined limit for the inner product structures along the family of K\"ahler polarizations. This leads to a natural definition of Hilbert space structure on the space of $\mathcal{P}_\infty$-polarized section $\mathcal{H}_\infty.$ As described in Sections  \ref{subsection-hinfinity} and \ref{subsection_fouriertransform}, in contrast to the case of $T^*U$ studied in \cite{baier.hilgert.kaya.mourao.nunes:2023}, the resulting $U$-equivariant isomorphism $\mathcal{H}_\mathrm{Sch} \to \mathcal{H}_\infty$ defined by the gCST is not asymptotically unitary (even for Mabuchi rays generated by Hamiltonians quadratic in the moment map) and unitarity is achieved only by including representation-dependent correcting factors for each isotypical component. Finally, in Section \ref{subsection_interpretation}, we give a quantum-geometrical interpretation of $\mathcal{P}_\infty$, in the line of the general program described in \cite{baier.hilgert.kaya.mourao.nunes:2023}, where we relate the K\"ahler quantization of $T^*(U/K)$ with a direct sum over the quantizations of the coisotropic reductions for $\mathcal{P}_\infty$ or, equivalently, over the quantizations of the symplectic reductions for the invariant moment map.

{}

\section{Preliminaries}
\label{sect-prelim}

\subsection{Basic definitions}

Let $U$ be a compact simply connected Lie group and $$\sigma:U\to U$$ an involutive automorphism 
such that $U/K$ is a symmetric space of compact type, where $K$ is a closed subgroup of $U$ and a relatively open subgroup of the set $U^\sigma$ of fixed points\footnote{Since we are taking $U$ to be simply connected, from \cite[Lemma II.6.2]{Takeuchi94} we also have  that $U^\sigma$ and hence $K$ is connected.}. One has for the Lie algebra {$\uu$} of $U$, and for its dual $\uu^*$, orthogonal decompositions with respect to a fixed $\Ad_U$-invariant inner product, $\langle\cdot, \cdot\rangle_\uu$, on $\uu$,
$$
\uu = \kk \oplus \ss, \,\, \uu^* = \kk^* \oplus \ss^*,
$$
where, for the derived automorphism of $\uu$,  also denoted by $\sigma$,
$$
{\sigma_{\vert{\kk}} = \mathrm{Id}_\kk}, \, \sigma_{\vert{\ss}} = -\mathrm{Id}_\ss,
$$
with $[\kk,\ss] \subset \ss$ and $[\ss,\ss] \subset \kk$.

The cotangent bundle $T^*(U/K)$ has the structure of a {homogeneous} vector bundle associated to the principal $K$-bundle $U\to U/K$,
$$T^*(U/K) =U\times_K \ss^*.
$$

\subsection{Invariant K\"ahler structures on $T^*(U/K)$}

Here, we will recall standard facts about the symplectic geometry of the cotangent bundles $T^\ast (U)$ and $T^\ast (U/K)$, realized as the symplectic quotient
\[
  T^\ast \left(U/K\right) = \left(T^\ast U\right) /\!\!/ K \, .
\]

Recall that there is a Hamiltonian right action of $U$ on $T^*U$ with moment map $\mu^{(R)}$. The moment map of the corresponding right action of the subgroup $K\subset U$ is
\begin{equation}
\label{ee-muk}
 \mu^K = \pi_{\kk^\ast} \circ \mu^{(R)} ,
 \qquad \pi_{\kk^\ast}: \uu^\ast \to \kk^\ast .
\end{equation}
Using the trivialization $T^*(U)\cong U\times \uu^\ast $ one finds that
\begin{equation}
 \label{e-quot-const}
 \left(\mu^K \right)^{-1}(0) = U \times \ss^\ast
 \quad \text{ and } \quad 
 \left(T^\ast U\right) /\!\!/ K =
 \left( \mu^K \right)^{-1}(0) / K =
 U \times_K \ss^\ast = T^*(U/K).
\end{equation}

The moment map of the left $U$-action on $T^\ast \left(U/K\right)$, which we will denote ${\mu}$, descends from $T^*U$ through this quotient,
\begin{equation}
\label{e-moment-symm-space}
 \begin{tikzcd}
  U \arrow[r, phantom, "{\circlearrowright}"] & T^\ast \left(U/K\right) \arrow[r, "{{\mu}}"] & \uu^\ast
 \end{tikzcd}, \qquad
{\mu}([x,\xi]) = \Ad^\ast_x \xi ,
\end{equation}
where $[x, \xi]$ denotes 
the $K$-orbit through 
the point $(x,\xi)$ in
$U \times \ss^*$.

If we equip $T^*U$ with a $U\times U$-invariant K\"ahler structure, then this symplectic quotient becomes a K\"ahler quotient so that $T^*(U/K)$ inherits an $U$-invariant K\"ahler structure with respect to the reduced symplectic structure.
We will now describe an infinite-dimensional space of $U$-invariant K\"ahler structures on $T^* (U/K)$ obtained by K\"ahler reduction from the $(U \times U)$-invariant K\"ahler structures on $T^*(U)$, which are described in Section 2.3 of \cite{baier.hilgert.kaya.mourao.nunes:2023} and in \cite{neeb:2000a, neeb:2000b, kirwin.mourao.nunes:2013}. Given a uniformly convex invariant function on $\uu^*$,
$$
g \in  \Conv^\infty_{\rm unif}(\uu^\ast)^{\mathrm{Ad}_U},
$$
one has the Legendre transform $\mathcal{L}_g:
T^*U \cong U\times \uu^*\to U_\CC$ given by
$$
\mathcal{L}_g (x,\xi) = x e^{id_\xi g}.
$$
The pull-back of the canonical complex structure on $U_\CC$ by $\mathcal{L}_g$ and the canonical symplectic structure on $T^*U$ define a $U\times U$-invariant K\"ahler structure on $T^*U$, $I_g$, with global K\"ahler potential
$$
\kappa_g(x,\xi) = \langle \xi, d_\xi g\rangle - g(\xi).
$$
{}
\begin{lemma} \label{lemma.1}
For any uniformly convex $g \in  \Conv^\infty_{\rm unif}(\uu^\ast)^{\mathrm{Ad}_U}$, the corresponding K\"ahler potential $\kappa_g: U_{\CC}\to \RR$ is exhausting.
\end{lemma}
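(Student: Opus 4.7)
The plan is to reduce the exhausting property of $\kappa_g$ on $U_\CC$ to a quadratic growth estimate for a function on $\uu^\ast$. First I would observe that, in the trivialization $T^\ast U \cong U \times \uu^\ast$, the expression $\kappa_g(x,\xi) = \langle \xi, d_\xi g\rangle - g(\xi)$ does not depend on the base point $x \in U$, so $\kappa_g$ descends to a function $f(\xi) := \langle\xi, d_\xi g\rangle - g(\xi)$ on $\uu^\ast$. Since $U$ is compact and the Legendre transform $\mathcal{L}_g : T^\ast U \to U_\CC$ is a diffeomorphism (the map $dg : \uu^\ast \to \uu$ is a global diffeomorphism precisely because $g$ is uniformly convex, and the polar decomposition $(x,Y)\mapsto xe^{iY}$ identifies $U \times \uu$ with $U_\CC$), the exhausting property of $\kappa_g$ on $U_\CC$ will follow from the exhausting property of $f$ on $\uu^\ast$.

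The main step is the growth estimate for $f$. Uniform convexity of $g$ should supply a constant $c > 0$ such that
\[
g(\eta) \geq g(\xi) + \langle d_\xi g, \eta - \xi\rangle + \frac{c}{2}\|\eta - \xi\|^2 \qquad \text{for all } \xi, \eta \in \uu^\ast,
\]
which follows from a uniform lower bound $\Hess(g) \geq c \cdot \mathrm{Id}$ by integrating twice along the segment from $\xi$ to $\eta$. Setting $\eta = 0$ and rearranging then yields
\[
f(\xi) = \langle\xi, d_\xi g\rangle - g(\xi) \geq \frac{c}{2}\|\xi\|^2 - g(0),
\]
so the sublevel sets $\{f \leq a\}$ are bounded in $\uu^\ast$; being closed by continuity of $f$, they are compact.

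To close the argument I would combine the two steps: the sublevel set $\{\kappa_g \leq a\} \subset T^\ast U$ coincides with $U \times \{f \leq a\}$, which is compact by the previous paragraph together with compactness of $U$, and transports to a compact sublevel set in $U_\CC$ under the diffeomorphism $\mathcal{L}_g$. This is precisely what it means for $\kappa_g$ to be exhausting.

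I do not anticipate a real obstacle here; the only point requiring care is to pin down the sense of ``uniformly convex'' used in the paper and confirm that it implies the displayed quadratic lower bound. Both of the standard interpretations (a uniform positive lower bound on the Hessian, or the quadratic inequality itself) suffice, and the Legendre framework already in place handles the transfer to $U_\CC$ without further work.
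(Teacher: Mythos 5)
Your proof is correct, but it takes a genuinely different route from the paper's. The paper works on the $U_\CC$ side: it observes that $\kappa_g\circ\LL_g$ need only be shown unbounded on unbounded subsets of $\uu^\ast$, notes that the gradient map $\xi\mapsto d_\xi g$ of a uniformly convex function carries unbounded sets to unbounded sets of $\uu$, and then invokes a duality result of Moreau--Rockafellar (supercoercivity of the convex conjugate of a function bounded on bounded sets) to conclude that $\kappa_g$, which is essentially $g^\ast$ along the $\uu$-directions, blows up there. You instead stay entirely on the $\uu^\ast$ side and extract the explicit quadratic lower bound $\langle\xi,d_\xi g\rangle-g(\xi)\geq \tfrac{c}{2}\|\xi\|^2-g(0)$ from the strong-convexity gradient inequality, then transport compactness of sublevel sets through the diffeomorphism $\LL_g$. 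Your argument is more elementary and quantitative; its only hypothesis-sensitivity is the one you flag yourself, namely that ``uniformly convex'' supplies the inequality $g(\eta)\geq g(\xi)+\langle d_\xi g,\eta-\xi\rangle+\tfrac{c}{2}\|\eta-\xi\|^2$, i.e.\ a uniform positive lower bound $\Hess g\geq c\,\mathrm{Id}$. That is indeed the convention in the references the paper relies on (and even under the weaker modulus-of-convexity definition one still gets at least quadratic growth at infinity, so the argument survives), so there is no gap. The paper's route buys independence from the precise quantitative form of uniform convexity at the price of citing an external convex-duality proposition; yours buys self-containedness and an explicit growth rate.
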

\begin{proof}
We need to show that the component in $\uu^\ast$ of the preimage of an interval $]-\infty,\lambda[$ under $\kappa_g \circ {\LL}_g$ is bounded, which follows if we show that
 $ \kappa_g \circ {\LL}_g $ is not bounded above on unbounded subsets of  $\uu^\ast$.
As $g$ is uniformly convex on $\uu^\ast$, its Legendre transform maps unbounded subsets to unbounded subsets in $\uu$, but on these $\kappa_g$ is unbounded as it is supercoercive by a proposition of Moreau and Rockafellar \cite[Prop. 3.5.4]{borwein.vanderwerff:2010}.
\end{proof}

\begin{remark}
 As we will see in Theorem \ref{th-4}, the exhaustion property of the global K\"ahler potential $\kappa_g$ is needed to show that all points in $U_\CC$ are  stable with respect to the $K_\CC$-action in the sense that all $K_\CC$-orbits intersect the zero level set of the $K$-moment map. 
\end{remark}
{}
\begin{remark}\label{gissigmacompatible}Let $\sigma$ also denote the involution on $\uu^*$ induced from $\sigma:\uu\to \uu,$ so that 
$$
{\sigma_{\vert{\kk^*}} = \mathrm{Id}_{\kk^*}}, \, \sigma_{\vert{\ss^*}} = -\mathrm{Id}_{\ss^*}.
$$
In this paper, we will always assume that the symplectic potential $g$ (and also $h$ and $g_t = g+th, t>0,$ to appear below), are compatible with the involution $\sigma$, that is we assume that 
$$g\circ \sigma = g.$$ This is not a major restriction since symplectic potentials built from the even degree Casimirs of $\uu$ will satisfy this property, to begin with the fundamental example given by $g= \frac12 \vert\vert\xi\vert\vert^2.$ 
In addition to the properties listed in Proposition 2.7 of \cite{baier.hilgert.kaya.mourao.nunes:2023}, we will therefore also have that:
$$d_\xi g= d_{\sigma(\xi)}g \circ \sigma = \sigma( d_{\sigma(\xi)}g),
$$
so that, in particular, for $\xi\in\ss^*$ we get 
$$
(d_\xi g)_{\vert_{\ss^*}} =  - (d_{-\xi}g)_{\vert_{\ss^*}},
$$
and
$$
(d_\xi g)_{\vert_{\kk^*}} =  (d_{-\xi}g)_{\vert_{\kk^*}}.
$$
For the Hessian of $g$ we also obtain (see (c) in Proposition 2.7 of \cite{baier.hilgert.kaya.mourao.nunes:2023}), for $\xi\in \ss^*$,
$$
\mathrm{Hess}_g(\xi)_{\vert_{\ss^*}} =  \mathrm{Hess}_g(-\xi)_{\vert_{\ss^*}}
$$
These identities will be used below. 
\end{remark}

\begin{remark}\label{legendreok}
Note that, from Lemma 3.1 in \cite{kirwin.mourao.nunes:2013}, one has that the map 
$\uu^* \ni \xi \mapsto d_\xi g \in \uu$ is a diffeomorphism. The compatibility condition between $g$ and $\sigma$, imposed in Remark \ref{gissigmacompatible}, then gives that for $\xi\in\kk^*$,
$$
d_\xi g = \sigma d_\xi g,
$$
which implies that $d_\xi g\in \kk$ for $\xi\in \kk^*.$ The fact that $g$ is compatible with $\sigma$ also implies that the K\"ahler potential $\kappa_g$ is compatible with $\sigma$ so that the inverse Legendre transform, which is the Legendre transform  with respect to $\kappa_g$, maps $\kk$ bijectively onto $\kk^*$. It follows that, since $\uu = \kk \oplus \ss$, 
$$
\xi \in \ss^* \Leftrightarrow d_\xi g \in \ss.
$$
\end{remark}

{}

\begin{theorem}\label{th-4}
Symplectic reduction provides us with a map 
\begin{equation}\label{UxN-invariant_Is}
 \Conv^\infty_{\rm unif}(\uu^\ast)^{\Ad^\ast_U} \overset{\widehat{I}}{\longrightarrow} \mathcal{J}(T^\ast (U/K),\omega_{\rm std})^{U} ,
 \qquad
 g \mapsto \widehat{I}_g,
\end{equation}
where $\mathcal{J}(T^\ast (U/K),\omega_{\rm std})^{U}$ is the space of $U$-invariant complex structures on $T^\ast(U/K)$ compatible with the standard symplectic form $\omega_{\rm std}$.
Dually, these K\"ahler structures are described by a map
\begin{equation}\label{UxN-invariant_metrics}
 \Conv^\infty_{\rm unif}(\uu^\ast)^{\Ad^\ast_U} \overset{\widehat{\omega}}{\longrightarrow} \mathcal{K}(U_{\CC}/K_{\CC})^{U} , \qquad g \mapsto \widehat{\omega}_g,
\end{equation}
to the space of $U$-invariant K\"ahler forms on $U_{\CC}/K_{\CC}$.

Explicitly, these maps are defined by pulling back the relevant structures along the map descending from the Legendre transform ${\LL}_g$, which we still denote by ${\LL}_g$, through the symplectic quotient,
\begin{equation}\label{legendre_on_reduction}
    \begin{tikzcd}[row sep=small]
      U \times \ss^\ast \arrow[r, "{\LL_g\vert_{U \times \ss^\ast}}"] \arrow[d] & U\times \uu \arrow[r] & U_{\CC} \arrow[d] \\
      U \times_K \ss^\ast \arrow[rr, "{{\LL}_g}"] & & U_{\CC}/K_{\CC} \\[-3ex]
     {[x,\xi]} \arrow[rr, mapsto] & & x e^{i d_\xi g} K_{\CC}
    \end{tikzcd} .
\end{equation}
In particular, a K\"ahler potential $\widehat{\kappa}_g$ for $\widehat{\omega}_g$ is given by the restriction of $\kappa_g$ to the \emph{Kempf--Ness set} $\left(\mu^{K}\right)^{-1}(0) = U \times \ss^\ast$ (see (\ref{ee-muk})),
\begin{equation}\label{ee-9cc}
  \widehat{\kappa}_g \circ {\LL}_g ([x,\xi]) =
  \langle \xi, d_\xi g \rangle - g(\xi) . 
\end{equation}
\end{theorem}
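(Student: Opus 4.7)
The strategy is to construct the Kähler structure on $T^\ast(U/K)$ by Kähler reduction of $(T^\ast U, \omega_{\mathrm{std}}, I_g)$ by the right $K$-action, and then identify the result with $U_\CC/K_\CC$ via a descended Legendre map. By \eqref{e-quot-const} the zero level set is $(\mu^K)^{-1}(0) = U \times \ss^\ast$, so the set-theoretic reduction $T^\ast(U/K) = U \times_K \ss^\ast$ is already in place; what must be verified is (i) that the restriction of $I_g$ to the Kempf--Ness set descends to a well-defined complex structure $\widehat{I}_g$ compatible with the reduced symplectic form, (ii) that the induced map $\widehat{\LL}_g$ to $U_\CC/K_\CC$ is a diffeomorphism, and (iii) that the reduced Kähler potential is obtained by restriction of $\kappa_g$, yielding \eqref{ee-9cc}.

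First I would check that $\LL_g$ sends $U \times \ss^\ast$ into $U \exp(i\ss)$: by Remark \ref{legendreok}, the $\sigma$-compatibility of $g$ forces $d_\xi g \in \ss$ whenever $\xi \in \ss^\ast$, so $\LL_g(x,\xi) = x e^{i d_\xi g} \in U \exp(i\ss)$. Next I would show $\LL_g$ intertwines the $K$-action on $U \times \ss^\ast$ (from the right $K$-action on $T^\ast U$) with right multiplication by $K$ on $U_\CC$ modulo $K_\CC$: by $\mathrm{Ad}_U$-invariance of $g$ one has $d_{\mathrm{Ad}^\ast_k \xi} g = \mathrm{Ad}_k (d_\xi g)$, hence $\LL_g(x k^{-1}, \mathrm{Ad}^\ast_k \xi) = x k^{-1} e^{i\, \mathrm{Ad}_k d_\xi g} = x e^{i d_\xi g} \cdot k^{-1}$, which is the same $K_\CC$-coset as $\LL_g(x,\xi)$. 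This produces the descended map $\widehat{\LL}_g$ in \eqref{legendre_on_reduction}. That $\widehat{\LL}_g$ is a diffeomorphism then follows from the polar (Mostow) decomposition $U_\CC = U \exp(i\ss) K_\CC$ for complexified symmetric spaces, combined with the fact that $\LL_g\vert_{U\times\ss^\ast}$ restricts to a diffeomorphism onto $U\exp(i\ss)$ (because $\xi \mapsto d_\xi g$ is, by Remark \ref{legendreok}, a diffeomorphism $\ss^\ast \to \ss$), modding out the residual $K$-action on both sides.

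The step I expect to be the crux is point (ii), and more specifically the assertion that \emph{every} $K_\CC$-orbit in $U_\CC$ meets the Kempf--Ness set, i.e.\ that the stratum of $g$-semistable points coincides with all of $U_\CC$. This is precisely the content of Kempf--Ness in this setting: one fixes a $K_\CC$-orbit $\OO$ and restricts $\kappa_g$ to it; because $\kappa_g$ is the Kähler potential of an $I_g$-Kähler metric and is $K$-invariant, its critical points on $\OO$ are exactly the points where the $K$-moment map $\mu^K$ vanishes. The exhaustion property established in Lemma \ref{lemma.1} guarantees that $\kappa_g\vert_\OO$ (which is convex along geodesics for the symmetric metric on the orbit, by standard Kähler reduction calculations) attains its infimum, producing a zero of $\mu^K$ in each orbit. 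Uniqueness up to $K$ then yields the bijectivity of $\widehat{\LL}_g$ on orbit spaces, and hence the existence of $\widehat{I}_g$ and $\widehat{\omega}_g$ with the stated compatibility.

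Finally, for the Kähler potential formula \eqref{ee-9cc}, I would invoke the standard principle of Kähler reduction: if $i: (\mu^K)^{-1}(0) \hookrightarrow T^\ast U$ and $p: (\mu^K)^{-1}(0) \to T^\ast(U/K)$ denote inclusion and projection, then $p^\ast \widehat{\omega}_g = i^\ast \omega_{I_g}$, and any $K$-invariant Kähler potential for $\omega_{I_g}$ restricts via $i^\ast$ to a function that descends through $p$ to a Kähler potential for $\widehat{\omega}_g$. Applying this to $\kappa_g$ (which is $U \times U$-invariant, hence in particular $K$-invariant under the right action) and evaluating $\kappa_g(\LL_g(x,\xi)) = \langle \xi, d_\xi g\rangle - g(\xi)$ on $U \times \ss^\ast$ yields the asserted formula. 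Pulling back $\widehat{\omega}_g$ along $\widehat{\LL}_g$ to $U_\CC/K_\CC$ completes the construction of the two advertised maps $\widehat{I}$ and $\widehat{\omega}$.
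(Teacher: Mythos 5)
Your proposal is correct and follows the same overall architecture as the paper's proof — Kähler reduction of $(T^\ast U, I_g)$ by the right $K$-action, descent of $\LL_g$ via $\Ad_U$-invariance of $g$, stability of all points from the exhaustion property of Lemma \ref{lemma.1}, and the potential formula by restriction to the Kempf--Ness set. Your Kempf--Ness paragraph (convexity of $\kappa_g$ along geodesics in a $K_\CC$-orbit, attainment of the infimum by exhaustion, critical points being zeros of $\mu^K$) is precisely the content of the lemma of Heinzner--Huckleberry that the paper cites at this step, so there is no real divergence there. Where you genuinely differ is in establishing that the descended map $\LL_g: U\times_K\ss^\ast \to U_\CC/K_\CC$ is a diffeomorphism: the paper stays inside the Heinzner--Huckleberry framework, identifying the descended Legendre map with the canonical map from the symplectic quotient to the analytic Hilbert quotient, which that theory shows is a diffeomorphism once stability is known; you instead factor $\LL_g$ through the $g$-independent map $[x,X]\mapsto xe^{iX}K_\CC$ (a clean reduction, using that $\xi\mapsto d_\xi g$ is a diffeomorphism $\ss^\ast\to\ss$) and appeal to the Mostow decomposition $U_\CC = U\exp(i\ss)K_\CC$. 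Note that the paper explicitly remarks, right after its proof, that this diffeomorphism ``is not at all easy to see from Lie theoretic arguments, not even in the case of $g(\xi)=|\xi|^2/2$'' — so your route rests on Mostow's theorem in its strong form (a global diffeomorphism $U\times\ss\times\kk\to U_\CC$, not merely a surjective decomposition with uniqueness), which is a nontrivial external input the authors deliberately avoid. If you grant that form of Mostow, your argument closes and is arguably more concrete; the paper's version buys the smoothness of the quotient identification for free from the complex-analytic quotient machinery, at the cost of invoking that machinery. There is also some harmless redundancy in your write-up: the surjectivity half of Mostow and the Kempf--Ness existence of a zero of $\mu^K$ in each orbit are the same statement proved twice.
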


\begin{proof}
Since, by Lemma \ref{lemma.1}, $\kappa_g$ is a global $(U\times U)$-invariant strictly plurisubharmonic exhaustion function on $U_\CC$, it follows from \cite[Lemma 2.4.2]{heinzner.huckleberry:1999} that all points in $U_\CC$ are
$\mu^K \circ (\LL_g)^{-1}$-stable.
Considering the restriction of $\LL_{g}$ 
to $(\mu^K)^{-1}(0)= U \times \ss^*$ as in (\ref{legendre_on_reduction}),
we know that both these maps are diffeomorphisms 
onto their images.
From the equivariance of the Legendre transform (see Proposition 2.7 in \cite{baier.hilgert.kaya.mourao.nunes:2023}), we know that the map $\LL_{{g}}|_{U \times \ss^*}$ descends to the $K$-quotients.
The map $\LL_{g}$ in the lower arrow of (\ref{legendre_on_reduction}) then corresponds  to the map $i_x$ in \cite[Lemma 2.4.3]{heinzner.huckleberry:1999} and hence defines a diffeomorphism between the symplectic and the Hilbert quotients. Thus, it  defines the K\"ahler quotient structure on these quotients.

The form of the K\"ahler potential in (\ref{ee-9cc}) follows from the fact that the K\"ahler potential of a K\"ahler reduction is obtained by restricting the K\"ahler potential to the Kempf--Ness set, cf. \cite[Proposition 2.4.6]{heinzner.huckleberry:1999} or \cite[Theorem 7]{biquard.gauduchon}. 
 \end{proof}
 \begin{remark}
  We note that the fact that the map (\ref{legendre_on_reduction}) is a diffeomorphism is not at all easy to see from Lie theoretic arguments, not even in the case of $g(\xi) = |\xi|^2/2$, for which
  $d_\xi g = \xi$.
 \end{remark}
We denote by $\widetilde{\mathcal K}(U/K)$
the space of K\"ahler structures
on $T^*(U/K)$ corresponding to the image of 
the map $\widehat I$ in 
(\ref{UxN-invariant_Is})
$$
\widetilde{\mathcal K}(U/K) := \left\{
(\omega_{\rm std} , \widehat{I}_g) \mid  g \in \Conv^\infty_{\rm unif}(\uu^\ast)^{\Ad^\ast_U}
\right\} \, .$$

\subsection{Restricted roots, Satake diagrams}
\label{subsection_restrictedroots}

Let $U/K$ be an irreducible symmetric space of compact type and recall that 
$T^*(U/K) \cong U \times_K \ss^*$, with $U$-moment map
given by (\ref{e-moment-symm-space}).
We will assume that the Cartan
subalgebra $\tt$ of $\uu$ is chosen $\sigma$-invariant such that
$$i \aa := \tt \cap \ss \subset \ss$$
is a maximal Abelian subspace of
$\ss$. 

We will now follow \cite{araki:1962, warner:1972, helgason:1984, helgason:2001} 
to describe aspects of the
geometry of the symmetric space $U/K$ through the
properties of the symmetric
pair
$(\uu_\CC,\kk_\CC)$.
Let, again, $\sigma$ denote the antilinear involution 
extended to $\uu_\CC$ by antilinearity
from the following linear involution on $\uu$,
$$
\sigma(X) = \left\{
\begin{array}{rl}
X     &  \hbox{if}  \, \,  X \in \kk\\
-X     & \hbox{if}  \, \,  X \in \ss  \, .
\end{array}
\right.
$$
{}
Note that $\sigma_{\vert_{\aa}}= \mathrm{Id}_{\aa}.$
The antilinear involution $\sigma$
defines an antilinear 
involution on $\uu^*_\CC$,
which leaves the root system $\Phi$
of $(\uu_\CC, \tt_\CC)$ invariant, 
and  defines an involutive isometry of $\Phi$
\cite[{\bf 1.3}]{araki:1962}.
\begin{eqnarray}
\nonumber
\label{e-anti-lin-inv}
\uu_\CC^* \, \ni  \xi & \mapsto & \xi^\sigma   \in \uu_\CC^* \\
\xi^\sigma(X) & := & \overline{\xi(\sigma(X))} \, , \quad \forall X \in \uu_\CC  \, . 
\end{eqnarray}
Let $E_\alpha, \alpha\in \Phi$, be a choice of root vectors such that $[E_\alpha, E_{-\alpha}]=H_\alpha$, where $H_\alpha$ denotes the coroot vector corresponding to $\alpha$, that is $\beta(H_\alpha) = \langle \alpha,\beta\rangle, \beta\in \Phi$, where $\langle\cdot, \cdot\rangle$ denotes the inner product on $i\tt$ obtained from the fixed invariant inner product on $\uu$ and extended to $\uu_\CC$.
If $\uu_\alpha := \CC E_\alpha$ denotes the 
root space generated by the vector $E_\alpha$,
then
$$
\sigma (E_\alpha)  \in 
\uu_{\alpha^\sigma}  \, .
$$

The pair $(\Phi, \sigma)$
is called a {\it $\sigma$-root system}.
The subset
$$
\Phi_0 := \left\{\alpha \in 
\Phi \mid
\alpha^\sigma = -\alpha \, 
\right\} = \left\{\alpha \in 
\Phi \mid   \alpha_{|_\aa} = 0  \right\}
\, 
$$
is a root subsystem of
$\Phi$.

The order 
of $i \tt^*$ 
with positive cone 
${\rm Cone}(\Lambda_+)$, where $\Lambda_+$ is the set of dominant integral weights induced by the choice of positive roots, $\Phi_+ \subset \Phi$,
is called
a {\it $\sigma$-order} if 
\cite[{\bf 2.8}]{araki:1962}, 
\cite[p. 23]{warner:1972}
\begin{equation}
\label{e-sigma-order}
\alpha \in \left(\Phi \setminus \Phi_0\right) \cap \Phi_+ 
\Rightarrow   \alpha^\sigma \in 
\Phi_+ \, .
\end{equation}
If $\Delta$ is a system of 
simple roots for a $\sigma$-order,
then $\Delta_0 := \Delta \cap \Phi_0$ is a system of simple roots for $\Phi_0$. Let $r:={\rm rank}(\Phi), \, 
r_0:={\rm rank}(\Phi_0)$
and
$$
\Delta = \left\{\alpha_1, \dots, 
\alpha_{r-r_0},
\alpha_{r-r_0+1}, \dots 
, \alpha_r\right\}
$$
with
$$
\Delta_0 = \left\{\alpha_{r-r_0+1}, \dots, 
\alpha_{r}\right\}   \, .
$$
Then $\sigma$ induces a permutation of 
$\Delta \setminus \Delta_0$ as
there is an involutive  permutation $\tilde \sigma$ of
$\{1, \dots , r-r_0\}$ such that \cite[Lemma 1.1.3.2]{warner:1972}
\begin{equation}
\label{e-perm-sigma-tilda}
\alpha_j^{\sigma} =
\alpha_{\tilde \sigma(j)} +
\sum_{k=r-r_0+1}^r \, c_k^{(j)}
\, \alpha_k    \, , \quad j = 1, \dots , r-r_0 \, ,
\end{equation}
with coefficients
$c_k^{(j)} \geq 0$.
Then the restriction of
roots to $\aa$,
$$
\Sigma : = \left(
\Phi \setminus \Phi_0\right)|_{\aa} \subset \aa^*  \, ,
$$
is a root system, called the
{\it restricted root system} 
of the $\sigma$-system 
$(\Phi, \sigma)$,
with system of simple
roots,  $\Delta^-$ given by
\begin{equation}
  \label{e-simple-restricted-roots}
\Delta^- :=
\left\{\beta_1, \dots , \beta_{l}  \right\} :=
\left(
\Delta \setminus \Delta_0\right)|_{\aa}   \, , \, 
  \end{equation}
where $l := \dim(\aa)= {\rm rank}(U/K)$. 
Note that, if $\alpha\in \Phi\setminus\Phi_0$ then, since $\alpha\in i\tt^* = i\, \mathrm{Hom_\RR}(\tt,\RR)$, one obtains  
$\alpha^\sigma_{\vert_{\mathfrak a}} = \alpha_{\vert_{\mathfrak a}}$. 

We see from
(\ref{e-perm-sigma-tilda})
that the simple roots
$\alpha_j$ and $\alpha_{\tilde \sigma(j)}$ map to the same
restricted root $\beta_j$.
These are the only relations 
for the restriction of simple roots from $\Delta \setminus \Delta_0$ to $\aa$ and the
restricted root systems can be obtained from the Dynkin diagram 
of $\Delta$ by a {\it Satake diagram}:
Simple roots from $\Delta_0$ are
represented by black circles and
roots from $\Delta \setminus \Delta_0$ are represented by white circles. White circles related by the permutation $\tilde \sigma$
are linked by a curved arrow. One has, therefore, 
\begin{equation}
    \label{e-role-of-arrows}
    l = r - n_b -n_a,
\end{equation}
where $n_b$ is the number of black circles and $n_a$ the number of arrows in the Satake diagram for $U/K$.
The classification of irreducible
symmetric spaces  
is then given by
the possible Satake diagrams. Those corresponding to
simple $\uu$ are listed by
the tables in \cite[p. 32, 33]{araki:1962} and
\cite[p. 30--32]{warner:1972}.

It follows from \cite[Lemma VI.3.6]{helgason:2001}
that $\ss_\CC$ has the following decomposition
\begin{equation}
\label{e-rest-root-space-dec}
 \ss_\CC = \aa_\CC + \sum_{\alpha \in (\Phi \setminus \Phi_0)\cap \Phi_+} \, 
 \CC \left(E_\alpha - \theta   E_{\alpha}\right)
    \, , \\
\end{equation}
where 
$\theta$ denotes the 
$\CC$--linear involution 
on $\uu_\CC$ that on 
$\uu$ coincides with $\sigma$ so that
$\theta|_{\kk_\CC} = {\rm Id}_{\kk_\CC}$
and 
$\theta|_{\ss_\CC} = - {\rm Id}_{\ss_\CC}$. 

\subsection{Spherical representations}
\label{subsection_sphericalreps}

Recall that we are assuming  that $U$ is simply-connected.\footnote{Just as in \cite{baier.hilgert.kaya.mourao.nunes:2023}, this condition makes the presentation simpler but we expect the generalization to other cases to be straightforward. See Remark \ref{remark-unotsimplyc}.} (Note that since $U$ is compact one has a splitting $\uu = [\uu,\uu]\oplus \mathfrak{b}$ where $\mathfrak{b}$ is abelian; since $U$ is simply-connected one has $\mathfrak{b}=\left\{0\right\}$ so that $U$ is  semisimple.)
We have the isomorphism 
$$
 L^2(U/K) \cong L^2(U)^K  \, ,
$$
where $L^2(U)^K$
denotes the subspace of
right $K$-invariant functions. 
Let $\hat U$ denote the set of equivalence classes of irreducible representations $V_\lambda$ of $U$, labelled by highest weight $\lambda$. Recall the Peter-Weyl theorem giving an orthogonal decomposition
$$
L^2(U) \cong \widehat \bigoplus_{\lambda \in \hat U} \mathrm{End}(V_\lambda),
$$
where the hat over the sum denotes the norm completion. The summand 
$$
\mathrm{End} (V_\lambda) \cong V_\lambda\otimes V_\lambda^*
$$
is realized by 
$$
f_{\lambda, v\otimes w^*} (x)= \tr (\pi_\lambda(x) v\otimes w^*) \in L^2(U), \, \, \lambda \in \hat U, v\in V_\lambda, w^*\in V_\lambda^*,
$$
where $\pi_\lambda(x)\in \mathrm{End}(V_\lambda)$ 
denotes the representative of $x\in U$.
Then $L^2(U/K)$ is  given by the terms which are invariant under the right $K$-action. From, Theorem 4.1 in Chapter 5 in \cite{helgason:1984}, each $V_\lambda$ in $\hat U$ contains at most a one-dimensional subspace of so-called $K$-spherical vectors which are invariant under $K$. The corresponding representations are called $K$-spherical representations and we denote them by $\hat U_K$. 

If $\lambda\in \hat U_K$ let $v_\lambda^K\in V_\lambda$ be a non-trivial $K$-spherical vector and let $V_\lambda^K=\CC \cdot v_\lambda^K$ be the one-dimensional subspace of $K$-spherical vectors. Then, we obtain an orthogonal decomposition
$$
L^2(U/K) \cong \widehat \bigoplus_{\lambda\in \hat U_K}
\mathrm{Hom}(V_\lambda,V_\lambda^K),
$$
where the summand $\mathrm{Hom}(V_\lambda,V_\lambda^K)\cong V_\lambda^K\otimes V_\lambda^*$ is realized by the right $K$-invariant functions
$$
f_{\lambda, v_\lambda^K\otimes v^*}\in L^2(U), \, v^*\in V_\lambda^*.
$$
(Note that the choice of spherical vector $v_\lambda^K$ is unique up to a multiplicative constant $c$ and that $f_{\lambda, cv_\lambda^K\otimes v^*}= c f_{\lambda, v_\lambda^K\otimes v^*}$.)

From Theorem 4.1 in Chapter 5 in \cite{helgason:1984} we also obtain the explicit characterization of the highest weights for the $K$-spherical representations. If $\tt_{\mathbb Z}^*$ denotes the weight lattice for $U$, we have that a highest weight 
$\lambda\in \tt_{\mathbb Z}^*$ is the highest weight of a $K$-spherical representation if and only if 
$$
\lambda(i(\tt \cap \kk)) =0.
$$
We denote by $\Lambda_+^K$ the set of highest weights of $K$-spherical representations. In particular, the cone generated by $\Lambda^K_+$ is
$$
\mathrm{Cone}(\Lambda^K_+) = \aa_+^*,
$$
where $\aa_+^* := i(\tt^*_+ \cap \ss^*)$ and $\tt^*_+$ is the closed positive Weyl chamber associated to the choice of positive roots $\Phi_+\subset \Phi$, defined by
$$
\tt^*_+ = \left\{ \xi\in\tt^*\,: \, \langle\alpha, i\xi\rangle\geq 0, \alpha\in \Phi_+\right\}
$$
(Note that 
$\aa^* = \mathrm{Hom}_\RR (\aa, \RR) = \mathrm{Hom}_\RR (\tt\cap \ss, i\RR) = i(\tt^*\cap \ss^*).$)

\begin{remark}\label{remark-unotsimplyc}
    If $U$ is not simply-connected, the above needs to be adapted since not all representations of $\uu$ will integrate to representations of $U$. 
\end{remark}

\begin{remark}\label{rmk-legendreok2}
 In particular, from Remark \ref{legendreok}, since the Legendre transform maps the  positive Weyl chamber in $\uu^*$ to its dual (see Lemma 4.7 in \cite{baier.hilgert.kaya.mourao.nunes:2023}), $d_{\xi_+}g\in -i\aa_+=\tt_+\cap \ss$ if and only if $\xi_+ \in -i\aa_+^*=\tt_+^*\cap \ss^*.$    
\end{remark}

{}

\subsection{The invariant moment map and invariant torus action}

Recall the 
sweeping map $s:\uu^*\to \tt^*_+$, given by conjugation to the positive Weyl chamber. The invariant moment map is
\begin{equation}\label{eq:mu_inv}
   \mu_\mathrm{inv} := s \circ \mu, 
\end{equation}
whose image defines the Kirwan polytope. The $\mu$-regular stratum
$T^*(U/K)_\mathrm{reg}$ is the pre-image under $\mu_\mathrm{inv}$ of the relative interior of the top-dimensional face of the Kirwan polytope which is called the principal stratum. Along $T^*(U/K)_\mathrm{reg}$, as we will recall later, $\mu_\mathrm{inv}$ is the moment map for a smooth effective Hamiltonian torus action by a torus $T_\mathrm{inv}$ which will play a crucial role in our analysis. The torus $T_\mathrm{inv}$ is a quotient by a discrete subgroup of the torus $\tilde T_\mathrm{inv}$ whose Lie algebra is determined by the principal stratum. Let $\tau_{U/K}$ denote the principal stratum of $T^*(U/K)\cong U\times_K \ss^*.$

\begin{proposition}
A point $[x, \xi] \in U\times_K \ss^*$ is 
$\mu$--regular if and only if
\begin{equation}
\label{e-muhat-regular}
\langle s(\xi) , \alpha\rangle \neq 0
\, , \quad \forall \alpha \in \Phi \setminus \Phi_0  \, .
\end{equation}
\end{proposition}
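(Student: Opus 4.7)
The plan is to reduce the statement to a characterization of the relative interior of the Kirwan polytope, which for this symmetric space is nothing but the closed positive restricted Weyl chamber $i\aa^*_+ \subset \tt^*_+$, and then translate the regularity condition on the restricted roots into the condition \eqref{e-muhat-regular} on $\Phi\setminus\Phi_0$.

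First I would observe that, since the sweeping map $s$ is $\mathrm{Ad}^*_U$-invariant by construction, one has
\[
 \mu_\mathrm{inv}([x,\xi]) \;=\; s(\mathrm{Ad}^*_x \xi) \;=\; s(\xi),
\]
so whether a point $[x,\xi]$ is $\mu$-regular depends only on $\xi\in\ss^*$. In particular I may assume $x=e$ and work directly with $s(\xi)$. Next I would identify the Kirwan polytope $\mu_\mathrm{inv}(T^*(U/K))=s(\ss^*)$. Every $K$-orbit in $\ss^*$ meets $i\aa^*_+=\tt^*_+\cap\ss^*$ (this is the standard ``polar decomposition'' of the symmetric pair, which underlies Subsection~\ref{subsection_restrictedroots} and Remark~\ref{rmk-legendreok2}); since $s$ is $U$-invariant and fixes elements of $\tt^*_+$, this gives $s(\ss^*) = i\aa^*_+$. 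The top-dimensional face of this (closed, convex, polyhedral) cone is $i\aa^*_+$ itself, and its relative interior is the open restricted positive Weyl chamber
\[
 \bigl\{\xi_+ \in i\aa^* \,\bigm|\, \langle \beta,\xi_+\rangle > 0 \ \forall \beta \in \Delta^-\bigr\}
 \;=\; \bigl\{\xi_+ \in i\aa^*_+ \,\bigm|\, \langle \beta,\xi_+\rangle \neq 0 \ \forall \beta \in \Sigma \bigr\}.
\]

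Finally I would translate this condition on restricted roots into the condition on $\Phi\setminus\Phi_0$. For $\xi_+\in i\aa^*\subset\tt^*$, write any $\alpha\in\Phi\subset i\tt^*$ as $\alpha=\alpha_\kk+\alpha_\ss$ relative to the $\sigma$-orthogonal decomposition $i\tt^* = (i\tt^*\cap\kk^*)\oplus i\aa^*$; then $\langle\alpha,\xi_+\rangle = \langle\alpha_\ss,\xi_+\rangle$. If $\alpha\in\Phi_0$, then $\alpha|_\aa=0$, i.e.\ $\alpha_\ss=0$, and the pairing vanishes identically; if $\alpha\in\Phi\setminus\Phi_0$, then $\alpha_\ss$ is (up to the identification $i\aa^*\simeq\aa^*$) precisely the restricted root $\beta=\alpha|_\aa\in\Sigma$. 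Since distinct $\alpha$'s with the same restriction give the same $\beta$, requiring $\langle s(\xi),\alpha\rangle\neq 0$ for every $\alpha\in\Phi\setminus\Phi_0$ is equivalent to requiring $\langle s(\xi),\beta\rangle\neq 0$ for every $\beta\in\Sigma$, which by the previous paragraph is exactly the condition that $s(\xi)$ lie in the relative interior of the principal face.

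The only real friction I expect is bookkeeping: keeping the pairings between $i\tt^*$, $\tt^*$ and $\aa^*$ compatible via the chosen $\mathrm{Ad}_U$-invariant inner product, and, if one wants to be thorough, verifying that the relative interior of $i\aa^*_+$ really does coincide with the principal stratum of the Kirwan polytope (i.e.\ the stratum where $\mu_\mathrm{inv}$ is the moment map for the effective $T_\mathrm{inv}$-action announced just before the proposition). The substance, however, is entirely captured by the elementary observation that $s(\ss^*)=i\aa^*_+$ together with the Satake-diagram description of $\Sigma$ as $(\Phi\setminus\Phi_0)|_\aa$.
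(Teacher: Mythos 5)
Your proposal is correct and follows essentially the same route as the paper: identify $\mu$-regularity with $s(\xi)$ lying in the relative interior of the principal stratum $-i\check\aa^*_+$, characterize that relative interior by nonvanishing of the restricted roots $\beta\in\Sigma$, and then use that for $s(\xi)\in\tt^*_+\cap\ss^*$ one has $\langle s(\xi),\alpha\rangle=\langle s(\xi),\alpha|_\aa\rangle$ so the condition on $\Sigma$ is equivalent to the condition on $\Phi\setminus\Phi_0$. The paper states this last equivalence in one line where you spell it out via the decomposition $\alpha=\alpha_\kk+\alpha_\ss$, but the substance is identical.
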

\begin{proof}
The point $[x,\xi]$ being
{\it not} $\mu$-regular means that
$s(\mu([x, \xi]))
= s(\xi)$ is in the 
(relative) boundary of $\tau_{U/K}$. 
From the discussion in Section \ref{subsection_restrictedroots},
we see that this
happens
if and only if
$\langle s(\xi), \beta \rangle =0$, for some $\beta \in \Sigma$, which,
since $s(\xi) \in i \aa_+^*= (\tt^*_+\cap\ss^*)$, is equivalent
to
$\langle s(\xi), \alpha \rangle =0$
for some root
$\alpha \in \Phi\setminus \Phi_0$.
\end{proof}

Concerning the momentum
set of $T^*(U/K)$
one has the following.
\begin{proposition}
\label{e-mom-set}
The Kirwan polytope of $T^* (U/K)$
is $-i$ times the cone generated by dominant $K$-spherical weights
\begin{equation}
\label{e-cone}
\mu(T^*(U/K)) \cap \tt^*_+ 
%= -i \, \, {\rm Cone}(\Lambda^K_+)
= -i \, \aa^*_+\, . \end{equation}
\end{proposition}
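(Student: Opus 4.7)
The plan is to compute $\mu(T^*(U/K)) \cap \tt^*_+$ as the image $s(\ss^*)$ of the sweeping map restricted to $\ss^*$, to identify this image with $\tt^*_+ \cap \ss^*$ via the Cartan decomposition and restricted Weyl group action, and then to conclude using the definition $\aa^*_+ := i(\tt^*_+\cap\ss^*)$ together with the result $\mathrm{Cone}(\Lambda^K_+) = \aa^*_+$ established in Section \ref{subsection_sphericalreps}.

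From (\ref{e-moment-symm-space}), $\mu([x,\xi]) = \mathrm{Ad}^*_x \xi$, so $\mu(T^*(U/K)) = \mathrm{Ad}^*_U(\ss^*)$ is the union of $U$-coadjoint orbits meeting $\ss^*$. Since each $\mathrm{Ad}^*_U$-orbit in $\uu^*$ meets the positive Weyl chamber in the single point $s(\xi)$, the intersection equals $s(\ss^*)$, and the claim reduces to $s(\ss^*) = \tt^*_+ \cap \ss^*$. The inclusion $\tt^*_+ \cap \ss^* \subseteq s(\ss^*)$ is immediate, as any $\xi$ already in $\tt^*_+$ satisfies $s(\xi) = \xi$. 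For the reverse inclusion, I would apply the Cartan decomposition for $(\uu,\kk)$: the maximal abelian $i\aa \subset \ss$ meets every $\mathrm{Ad}_K$-orbit in $\ss$, so dually every $\xi \in \ss^*$ is $\mathrm{Ad}^*_K$-conjugate to some $\xi_0 \in \tt^* \cap \ss^*$ (using $\xi|_\kk = 0$ to see that the resulting covector is supported on $i\aa$). The restricted Weyl group $W(\Sigma) = N_K(\aa)/Z_K(\aa)$ acts on $\tt^* \cap \ss^*$ through $\mathrm{Ad}^*_{N_K(\aa)}$, and since $\mathrm{Ad}_{N_K(\aa)}$ preserves both $i\aa$ and $\kk$, a representative of a suitable Weyl element conjugates $\xi_0$ to an element of $\tt^*_+ \cap \ss^*$. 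This conjugate is both $\mathrm{Ad}^*_U$-equivalent to $\xi$ and lies in $\tt^*_+$, hence equals $s(\xi)$. Combining, $s(\ss^*) = \tt^*_+ \cap \ss^* = -i\aa^*_+$, and by $\mathrm{Cone}(\Lambda^K_+) = \aa^*_+$ of Section \ref{subsection_sphericalreps} this equals $-i\,\mathrm{Cone}(\Lambda^K_+)$.

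The main obstacle is verifying that $\tt^*_+ \cap \ss^*$ is indeed a fundamental domain for $W(\Sigma)$, i.e., that it corresponds under the identification $\tt^* \cap \ss^* = -i\aa^*$ to the positive chamber of the restricted root system $\Sigma$. This uses the $\sigma$-order property (\ref{e-sigma-order}) and the observation that for $\xi \in \tt^* \cap \ss^*$ one has $\xi|_{\tt \cap \kk} = 0$, so that $\alpha(\xi) = 0$ automatically when $\alpha \in \Phi_0$ and the positivity condition $\alpha(\xi) \geq 0$ for $\alpha \in \Phi_+$ reduces to $\beta(\xi|_{i\aa}) \geq 0$ for all $\beta = \alpha|_\aa \in \Sigma_+$, exactly as required. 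The rest of the argument is a matter of assembling these standard ingredients.
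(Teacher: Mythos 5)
Your proof is correct and takes essentially the same route as the paper's: the inclusion $\tt^*_+\cap\ss^*\subseteq \mu(T^*(U/K))\cap\tt^*_+$ is immediate from the form of the moment map, and the reverse inclusion follows by conjugating an arbitrary $\xi\in\ss^*$ into $-i\aa^*_+$ via the $\Ad^*_K$-action. The only difference is that the paper cites \cite[Theorem III.4.14]{helgason:1984} for this conjugation step, whereas you sketch its standard proof (maximal abelian subspace meeting every $\Ad_K$-orbit in $\ss$, then the restricted Weyl group together with the $\sigma$-order to land in the positive chamber), which is a fine, if slightly longer, way to supply the same ingredient.
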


\begin{remark}    
The principal stratum
$\tau_{U/K}$ is the
unique stratum with 
nonempty intersection with the
relative interior of this cone. Therefore, the dimension of $\tau_{U/K}$ is equal to the rank of $U/K$. 
\end{remark}
%Its codimension in $\tt^*$ is equal to
%the number of simple roots
%which restrict to zero on $\aa$
%(= number of black circles, $n_b$,  in the Satake diagram), 
%$$
%\dim(\tt^*) - {\rm dim}(\tau_{U/K}) = 
%n_b \, . 
%$$
%momentum cone $\aa_+$ ($=$ rank of the symmetric space), 
%i.e. the dimension of the kernel $C$ in $\hat T_\mu = T_\mu/C$,
%$\cc_\tau$ of $i^*$ in (\ref{e-ex-seq-lie-algeb})

\begin{remark}
Equation (\ref{e-cone}) implies that two different roots, $\alpha, \gamma
\in \Phi \setminus \Phi_0$, 
that are equal when restricted to $\aa$ correspond to the same component of $\mu_\mathrm{inv}$ with respect to a basis of $i\aa^*$ given by restricted roots.
(The components of $\mu_\mathrm{inv}$ are also called Guillemin-Sternberg action 
coordinates.) 
%That is, $Lie (T_\mathrm{inv}) = {\mathfrak t}_\mu^* /\hskip-%0.3em\sim$, where $\hat \alpha \sim \hat \gamma$ if  %$\hat \alpha_{\vert_{\mathfrak a}}= \hat %\gamma_{\vert_{\mathfrak a}}$, where
%$$
%\hat \alpha = \hat \gamma  \, 
%\in C^0(T^*(U/K)).
%$$
\end{remark}

\begin{proof}
From the form of the moment map (\ref{e-moment-symm-space})
it follows  that the image of the moment map contains the cone 
\begin{equation}
\label{e-aplus-cone}
\nonumber
 -i\aa^*_+ = \tt_+^*  \cap \ss^*   \, .
\end{equation}
On the other hand 
\cite[Theorem III.4.14]{helgason:1984}
implies that any element $\xi \in \ss^*$
can be conjugated to
 $-i \aa_+$ by the
$\Ad^*_K$-action,
and therefore also
by the $\Ad^*_U$-action, 
to a unique element so that we have $$
\mu(T^*(U/K)) \cap \tt^*_+ = \tt_+^* \cap \ss^* = -i \aa^*_+    \, . 
$$
%But from \cite[Theorem V.4.1]{helgason:1984}
%we conclude that 
%$$
%{\rm Cone}(\Lambda^K_+) = \aa_+^*,
%$$
%and also that the relative interior of $\aa^*_+$ is orthogonal %to the simple 
%roots of $\Phi_0$ and only to those, thus proving that 
%${\rm codim}(\tau_{U/K})=r_0$. 
%Equality (\ref{e-role-of-arrows})
%follows from the classification of symmetric spaces given by %the Satake diagrams (see 
%(\ref{e-simple-restricted-roots})
%and the discussion after). 
\end{proof}

\begin{example}
\label{ex-no-arrows}
%From Proposition \ref{e-mom-set} we see that 
%$\tilde T_\mathrm{inv} = T_\mathrm{inv}$ if and 
%only if there are no arrows in the Satake diagram for the symmetric space. On the other %hand, a simple example with a noneffective action of $\tilde T_\mathrm{inv}$ is given by
%the group $K$ considered as a symmetric space,
Let us consider the example of the group $K$ considered as a symmetric space 
$$
K \cong (K \times K)/K_{\rm diag}  \, ,
$$
where the quotient is taken with respect to the  diagonal action
and 
$ K_{\rm diag}  := {\rm diag}(K) =
\left\{(x, x)  \, : \, x \in K   \right\}$.
Then, if $\tt_\kk$ is a Cartan subalgebra of $\kk$,
we have that $\tt := \tt_\kk \oplus \tt_\kk$ is a Cartan subalgebra of 
$U = K \times K$ and we have
\begin{eqnarray*}
\kk_{\rm diag}  &=&
\left\{(X, X) \mid X \in \kk    \right\} \\
\ss &=&
\left\{(X, -X) \mid X \in \kk    \right\} \\
\aa &=& i \left\{(H, -H) \mid H \in \tt_\kk    \right\} \, .
\end{eqnarray*}
The antilinear involution $\sigma$
is defined on $i \tt $
by
\begin{eqnarray*}
 (iX, iX)^\sigma &:=& -i(X,X) \\
 (iY, -iY)^\sigma &:=& i(Y,-Y) \, , \quad X, Y \in \tt_\kk \, ,
\end{eqnarray*}
and therefore, on
$i\tt^*$ takes the form
\begin{equation*}
 (\eta_1, \eta_2)^\sigma = -(\eta_2,\eta_1)   \, , \quad \eta_1, \eta_2 \in i\tt_\kk^*  \,  .
\end{equation*}
Let $r_0$ be the rank of $K$ (so that $r=2r_0$) and consider an order on 
$i\tt_\kk^*$ corresponding to the simple 
roots $\Delta_\kk = \left\{\alpha_1, \dots, \alpha_{r_0} \right\}$ 
for the root system $\Phi_\kk \subset 
i \tt_\kk^*$
and an order on
$i\tt^*$ corresponding to the
following simple roots for $\tt = \tt_\kk \oplus \tt_\kk$,
$$\Delta := \left\{(\alpha_1,0) , \dots, (\alpha_{r_0},0), (0,-\alpha_1), \dots ,  (0, -\alpha_{r_0})\right\}  \, .
$$
Then, 
$$
\Phi = \left\{(\alpha,0), (0,\beta) \mid \alpha, \beta \in \Phi_{\kk}
\right\} \, , 
$$
and
\begin{eqnarray*}
\Phi_+ &=& \left\{(\alpha,0), (0,-\beta) \mid \alpha, \beta \in \Phi_{\kk, +}
\right\} \, , 
\\
\Phi_0 &=& \left\{(\alpha,\beta) \in \Phi \mid (\alpha,\beta)_{|_\aa} = 0 \right\}
=
\emptyset \, .
\end{eqnarray*}
Then,
$$
\Delta_0 = \Delta \cap \Phi_0 =
\emptyset   \, , 
$$
so that there are no black circles
in the Satake diagram and therefore the principal stratum 
is the interior of the Weyl chamber, with $$\dim(\tau_{U/K})=l=r_0   \, .$$
Let us now verify that this order is a $\sigma$--order. 
We have 
that 
\begin{equation}
\label{e-sigma-on-roots}
 (\alpha_j,0)^\sigma = (0,-\alpha_j) , \, \, 
 (0, -\alpha_j)^\sigma = (\alpha_j, 0) \, ,
    \end{equation}
and 
$$
(\Phi \setminus \Phi_0) \cap \Phi_+ = \Phi_+ \, .
$$
Then, since $(\alpha, -\beta)^\sigma = (\beta, -\alpha)$
we find that
$$
\sigma \, : \, 
(\Phi \setminus \Phi_0) 
\cap \Phi_+ = \Phi_+ \longrightarrow \Phi_+ \, ,
$$
so that indeed the condition (\ref{e-sigma-order})
is verified.
From 
(\ref{e-perm-sigma-tilda}) and
(\ref{e-sigma-on-roots})
we see that $\tilde \sigma$ 
simply permutes the simple
roots of one factor with the 
simple roots of the second factor
so that the Satake diagram reads: 
\begin{center}
\begin{tikzpicture}
  \draw[line width=1.5pt] (2,1.5) edge (0,1.5) edge (3,2.5) edge (3,0.5);
  \draw[line width=1.5pt] (2,-1.5) edge (0,-1.5) edge (3,-2.5) edge (3,-0.5);

  \draw[line width=1.5pt, fill, color=white] (0,1.5) circle [radius=3pt];
  \draw[line width=1.5pt, fill, color=white] (1,1.5) circle [radius=3pt];
  \draw[line width=1.5pt, fill, color=white] (2,1.5) circle [radius=3pt];
  \draw[line width=1.5pt, fill, color=white] (3,2.5) circle [radius=3pt];
  \draw[line width=1.5pt, fill, color=white] (3,0.5) circle [radius=3pt];
  \draw[line width=1.5pt, fill, color=white] (0,-1.5) circle [radius=3pt];
  \draw[line width=1.5pt, fill, color=white] (1,-1.5) circle [radius=3pt];
  \draw[line width=1.5pt, fill, color=white] (2,-1.5) circle [radius=3pt];
  \draw[line width=1.5pt, fill, color=white] (3,-2.5) circle [radius=3pt];
  \draw[line width=1.5pt, fill, color=white] (3,-0.5) circle [radius=3pt];

  \draw[line width=1.5pt] (0,1.5) circle [radius=3pt];
  \draw[line width=1.5pt] (1,1.5) circle [radius=3pt];
  \draw[line width=1.5pt] (2,1.5) circle [radius=3pt];
  \draw[line width=1.5pt] (3,2.5) circle [radius=3pt];
  \draw[line width=1.5pt] (3,0.5) circle [radius=3pt];
  \draw[line width=1.5pt] (0,-1.5) circle [radius=3pt];
  \draw[line width=1.5pt] (1,-1.5) circle [radius=3pt];
  \draw[line width=1.5pt] (2,-1.5) circle [radius=3pt];
  \draw[line width=1.5pt] (3,-2.5) circle [radius=3pt];
  \draw[line width=1.5pt] (3,-0.5) circle [radius=3pt];

  \draw (-0.1,1.35)  edge[{Stealth[width=6pt,length=10pt]}-{Stealth[width=6pt,length=10pt]}, bend  right] (-0.1,-1.35);
  \draw (0.9,1.35)  edge[{Stealth[width=6pt,length=10pt]}-{Stealth[width=6pt,length=10pt]}, bend  right] (0.9,-1.35);
  \draw (1.9,1.35)  edge[{Stealth[width=6pt,length=10pt]}-{Stealth[width=6pt,length=10pt]}, bend  right] (1.9,-1.35);
  \draw (2.95,2.35)  edge[{Stealth[width=6pt,length=10pt]}-{Stealth[width=6pt,length=10pt]}, bend  right] (2.95,-0.35);
  \draw (3.05,0.35)  edge[{Stealth[width=6pt,length=10pt]}-{Stealth[width=6pt,length=10pt]}, bend  left] (3.05,-2.35);
\end{tikzpicture}
\end{center}

\begin{center}
{\it Satake diagram of $\left(\mathfrak{so}(10)\oplus \mathfrak{so}(10), {\mathfrak{so}_{\rm diag}(10)}\right)$}
\end{center}
%Then, from (\ref{e-role-of-arrows}), it follows that
%$\dim \aa = \dim(\tau_{U/K})=r$
%and
For the Guillemin--Sternberg coordinates 
we  therefore have the  relations,
${(\alpha_j, 0)} = - {(0, \alpha_j)}   \, .$
\end{example}

Let us now describe the action of the invariant torus in more detail. Recall from \cite{guillemin.sternberg:1984, lane:2017, knop:2011} that, under mild conditions, if 
$X$ is a smooth manifold with an Hamiltonian $U$-action with equivariant moment map
$\mu:X \to \uu^*$, then along the (open dense) regular stratum $X_\mathrm{reg}= \mu_\mathrm{inv}^{-1}(\tau_X)$, where $\mu_\mathrm{inv}=s\circ\mu$ as in \eqref{eq:mu_inv} and $\tau_X$ is the principal stratum of the Kirwan polytope associated with the $U$-action on $X$, is the moment map for a Hamiltonian action of a torus $T_\mathrm{inv}$,
$$
t \star p = (u^{-1} t u) \cdot p,\,\, p\in X_\mathrm{reg}, \, t\in T_\mathrm{inv},
$$
where $\mathrm{Ad}^*_u \mu(p)\in \tau_X$. In our case, $T^*(U/K)_\mathrm{reg}$, writing $\tau_{U/K}$ as before instead of $\tau_{T^*(U/K)}$, we have that 
$$
\tau_{U/K} = -i\check\aa_+^*,
$$
where 
$$\check\aa^*_+=\{\eta\in \aa^*\mid \forall \beta\in \Sigma_+: \langle \beta,\eta\rangle>0\},$$
and $\Sigma_+:=\{\alpha|_\aa\mid \alpha\in \Phi_+\setminus \Phi_0\}$.
Using the $K$-action, as in the proof of Proposition \ref{e-mom-set}, an element $[x,\xi]\in T^*(U/K)_\mathrm{reg}$ can be written uniquely in the form $[u,\xi_+]$ where $u\in U$ and $\xi_+\in -i\check \aa^*_+.$ The invariant torus is then
$$
T_\mathrm{inv} \cong T_\ss := \exp (i\aa),
$$
with action
$$
[u,\xi_+]\star t = [ut^{-1}, \xi_+], t\in T_\mathrm{inv}.
$$
{}

\subsection{Fourier harmonics for $T_\mathrm{inv}$}

Let $f:T^*(U/K)_\mathrm{reg}\to \CC$. We obtain a Fourier decomposition in terms of characters of $T_\mathrm{inv}$,
$$
f = \sum_{\nu\in \hat T_\mathrm{inv}} \hat f_\nu,
$$
where 
$$
\hat f_\nu (p) = \int_{T_\mathrm{inv}} \chi_\nu (t) f(p\star  t)dt,
$$
and $\chi_\nu$ is the character associated to the weight $\nu$ of $T_\mathrm{inv}$. We have
$$
\hat f_\nu (p\star t) = \chi_\nu(t^{-1}) \hat f_\nu (p).
$$

Let now $g$ be an invariant uniformly convex function on $\uu^*$ and denote by $\hat I_g$ the corresponding $U$-invariant K\"ahler 
structure on $T^*(U/K)$ as described in Theorem \ref{th-4}. The coordinate ring of  $(T^*(U/K),\hat I_g)\cong U_\CC/K_\CC$
is generated by the $\hat I_g$-holomorphic functions
\begin{equation}\label{fs0}
  f^g_{\lambda, v_\lambda^K\otimes v^*}([x,\xi]) := 
\tr (\pi_\lambda (x e^{id_\xi g})v_\lambda^K\otimes v^* ),  
\end{equation}
where $\lambda \in \hat U_K, v^*\in V_\lambda^*.$ The restriction of $f^g_{\lambda, v_\lambda^K\otimes v^*}$ to the regular set $T^*(U/K)_\mathrm{reg}$ can then be conveniently written as
\begin{equation}\label{fs}
    f^g_{\lambda, v_\lambda^K\otimes v^*}([x,\xi_+]) = 
\tr (\pi_\lambda (x e^{id_{\xi_+} g})v_\lambda^K\otimes v^* ),
\end{equation}
with $\xi_+\in -i\check \aa_+^*.$ (Note that, from \cite{baier.hilgert.kaya.mourao.nunes:2023}, this implies that 
$d_{\xi_+}g\in \-i\aa_+.$) 

Let $P_\nu:V_\lambda \to V_\lambda$ denote the projection onto the weight space for the weight $\nu\in \ss^*_\mathbb Z:=\{\lambda\in  \tt^*_\mathbb Z : \lambda|_{i(\mathfrak t\cap\mathfrak k)}=0\}.$ Then, for $A\in \mathrm{End} (V_\lambda)$,
$$
\tr (\pi_\lambda(x e^{id_{\xi_+}g})P_\nu A) = e^{i\langle\nu, d_{\xi_+}g\rangle}\tr (\pi_\lambda(x)P_\nu A).
$$

We then obtain the analog of Proposition 4.5 of \cite{baier.hilgert.kaya.mourao.nunes:2023}.

{}

\begin{proposition}\label{fourierharmonics}
The Fourier harmonics for the $\hat I_g$-holomorphic functions
$f^g_{\lambda, v_\lambda^K\otimes v^*}$ read
$$
\widehat{(f^g_{\lambda, v_\lambda^K\otimes v^*})}_\nu ([x,\xi_+]) = \tr (\pi_\lambda(x e^{id_{\xi_+}g})P_\nu v_\lambda^K\otimes v^*).
$$  
\end{proposition}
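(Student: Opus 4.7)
The plan is to compute the Fourier coefficient directly from its integral definition, exploiting two structural facts: that $e^{id_{\xi_+}g}$ lies in the complexified torus $T_\CC$ (and hence commutes with $T_{\mathrm{inv}}$), and that the $\tt$-weight decomposition of the spherical vector $v_\lambda^K$ involves only weights in $\ss^*_\mathbb{Z}$, which are exactly the characters of $T_{\mathrm{inv}}$.

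First, I would substitute the explicit formula \eqref{fs} together with the $T_{\mathrm{inv}}$-action $[u,\xi_+]\star t=[ut^{-1},\xi_+]$ into the definition of the Fourier harmonic, obtaining
$$
\widehat{(f^g_{\lambda, v_\lambda^K\otimes v^*})}_\nu ([x,\xi_+])
= \int_{T_{\mathrm{inv}}} \chi_\nu(t)\,\tr\bigl(\pi_\lambda(xt^{-1}e^{id_{\xi_+}g})\, v_\lambda^K\otimes v^*\bigr)\,dt.
$$
The crucial commutation step is that, by Remark \ref{rmk-legendreok2}, $d_{\xi_+}g\in -i\aa_+\subset \tt\cap\ss$, so $e^{id_{\xi_+}g}\in\exp(\tt_\CC)=T_\CC$; since also $t^{-1}=\exp(-H)$ with $H\in i\aa=\tt\cap\ss$, these two elements commute in $U_\CC$. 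Using this commutation and the linearity of the trace, I would pull the integral inside so that it acts only on $v_\lambda^K$.

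Then I would decompose $v_\lambda^K = \sum_\mu P_\mu v_\lambda^K$ in $\tt$-weight components. Because $v_\lambda^K$ is $K$-invariant, each summand must also be invariant under $T\cap K=\exp(\tt\cap\kk)$, so every weight $\mu$ that occurs satisfies $\mu|_{i(\tt\cap\kk)}=0$, i.e.\ $\mu\in\ss^*_\mathbb{Z}$, and hence $\chi_\mu$ descends to a character of $T_{\mathrm{inv}}$. Orthogonality of characters on $T_{\mathrm{inv}}$ then yields
$$
\int_{T_{\mathrm{inv}}} \chi_\nu(t)\,\pi_\lambda(t^{-1})\,v_\lambda^K\,dt \;=\; \sum_\mu \Bigl(\int_{T_{\mathrm{inv}}} \chi_\nu(t)\chi_{-\mu}(t)\,dt\Bigr) P_\mu v_\lambda^K \;=\; P_\nu v_\lambda^K,
$$
which, combined with the previous step, produces the claimed identity.

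The only delicate point I anticipate is the lattice bookkeeping identifying $\hat{T}_{\mathrm{inv}}$ with the appropriate sublattice of $\ss^*_\mathbb{Z}$, given that $T_{\mathrm{inv}}$ is $\exp(i\aa)$ modulo a discrete subgroup coming from the effectivity of the action; once the identification is in place, the orthogonality of characters closes the argument. The rest is a direct unwinding, exactly parallel to Proposition 4.5 of \cite{baier.hilgert.kaya.mourao.nunes:2023}, of which the present statement is the symmetric-space analog.
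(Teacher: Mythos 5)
Your argument is correct and is essentially the computation that the paper's proof delegates to Proposition 4.5 of \cite{baier.hilgert.kaya.mourao.nunes:2023} (with $A=v_\lambda^K\otimes v^*$ and $\nu\in\ss^*_{\mathbb Z}$): commute $t^{-1}$ past $e^{id_{\xi_+}g}$ inside $T_\CC$, expand $v_\lambda^K$ into weight components lying in $\ss^*_{\mathbb Z}$, and apply orthogonality of characters of $T_{\mathrm{inv}}$. The only difference is that you write the argument out self-contained rather than citing it, and your observation that $K$-invariance forces the weights of $v_\lambda^K$ to vanish on $i(\tt\cap\kk)$ matches the paper's remark that the spherical vector has components along several weight spaces, all in $\ss^*_{\mathbb Z}$.
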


\begin{proof}
    This is a special case of the proof of Proposition 4.5 in \cite{baier.hilgert.kaya.mourao.nunes:2023}. Namely, in Proposition 4.5 of \cite{baier.hilgert.kaya.mourao.nunes:2023}, just take $A\in \mathrm{End}(\mathrm{V_\lambda})$ to be
    $$
    A = v_\lambda^K\otimes v^*,
    $$
    and, moreover, consider the case when $\nu \in \ss^*_{\mathbb Z}.$ Note that, in general, the spherical vector $v_\lambda^K$ will have non-zero components along different weight spaces.
\end{proof}

\section{Mabuchi rays and a  mixed polarization on $T^*(U/K)$}
\label{sect-mabuchirays}

\subsection{The polarization ${\mathcal P}_\infty$ on $T^*(U/K)_\mathrm{reg}$}
\label{sec-fibering}

In this section, we will apply the concept of fibering polarization, developed in Section 3 of \cite{baier.hilgert.kaya.mourao.nunes:2023}, to the symplectic manifold $T^*(U/K)_\mathrm{reg}$. This will provide a generalization of the Kirwin--Wu polarization on $(T^*K)_\mathrm{reg}$, $\mathcal{P}_\mathrm{KW}$, which was studied in detail in \cite{baier.hilgert.kaya.mourao.nunes:2023}, and which corresponds to the case $U=K\times K$ and $U/K \cong K.$ 

Consider the diagram
\begin{equation*}
\begin{tikzcd} 
		& \begin{array}{l}
		U\times_K \ss^\ast_{\mathrm{reg}} \\
		\quad \parallel\\
		T^\ast (U/K)_{\mathrm{reg}}
		\end{array}
		  \ar[rd, "\mu_\mathrm{inv}"] \ar[ld, swap, "\mu"] &\\
		\mathfrak{u}_{-i\aa^\ast}^\ast  \ar[rr,"\phi"] & & -i\aa^{\ast}_+
\end{tikzcd}
\end{equation*}
where $\mathfrak{u}^\ast_{-i\aa^\ast}\subset \uu^*$ is the subset of elements whose coadjoint orbits intersect 
$\tt^\ast$ in $-i\aa^\ast$ and $\phi$ is the restriction of the sweeping map $s$ to this set. As detailed in \cite{baier.hilgert.kaya.mourao.nunes:2023}, this describes a $U$-invariant mixed polarization $\mathcal{P}_\infty$, of $T^*(U/K)_\mathrm{reg}$ whose real directions are given by the orbits of $T_\mathrm{inv}$ and whose complex directions are given by the fibers of the map $\phi$.  In our case, these are coadjoint orbits
$$
\phi^{-1} (\xi_+) = \mathcal{O}_{\xi_+}, \, \xi_+\in -i\aa_+^\ast,
$$
which carry a canonical complex structure given by the identifications
$$
\mathcal{O}_{\xi_+} \cong U_\CC / B,
$$
where $B$ is a Borel subgroup. The fibers of $\mu_\mathrm{inv}$ define the coisotropic distribution 
$$
{\mathcal E} := (\mathcal{P}_\infty + \overline{\mathcal{P}_\infty})\cap T(T^*(U/K)_\mathrm{reg}).
$$
Each of the coadjoint orbits $\mathcal{O}_{\xi_+},\, \xi_+\in -i\aa_+^\ast$ is then the base of a $T_\mathrm{inv}$-principal fiber bundle given by the corresponding fiber of $\mu_\mathrm{inv}$ and corresponds to its coisotropic reduction. 

Since $T^*(U/K)_\mathrm{reg}$ is multiplicity free (the orbits of the $U$-action are separated by the values of $\mu$) we can apply Corollary 3.19 of \cite{baier.hilgert.kaya.mourao.nunes:2023} so that any $\mathcal{P}_\infty$-polarized section will be supported on the inverse image under $\mu_\mathrm{inv}$ of the intersection of the interior of the Kirwan polytope with $-i$ times the  $\rho$-translate of the character lattice of the maximal torus, where $\rho$ is the half-sum of positive roots.

The notation $\mathcal{P}_\infty$ is justified below when we show that 
$\mathcal{P}_\infty$ can be obtained at infinite geodesic time along a Mabuchi ray of $U$-invariant K\"ahler polarizations of $T^*(U/K)$.

\subsection{Local description of ${\mathcal P}_\infty$}
Let us describe the polarization $\mathcal{P}_\infty$ locally in terms of Hamiltonian vector fields. Consider the functions
$$
F_{\lambda, v_\lambda^K\otimes v^*}([x,\xi_+]):=
\tr (\pi_\lambda(x)P_\lambda v_\lambda^K\otimes v^*), \,\, [x,\xi_+]\in T^*(U/K),
$$
for $v^*\in V_\lambda^*$.
\begin{proposition}\label{prop-holomorphicdirections}
    Let $V_\lambda$ be a highest weight representation of $U$ with highest weight $\lambda\in \hat U_K.$ 
    For $v^*_1,v_2^*\in V_\lambda^*$, the complex function
    $$
    \frac{F_{\lambda, v_\lambda^K\otimes v^*_1}}{F_{\lambda, v_\lambda^K\otimes v^*_2}},
    $$
    defined on the subset $O\subset T^*(U/K)_\mathrm{reg}$ where the denominator does not vanish, is $T_\mathrm{inv}$-invariant and hence descends to a function on $O/T_\mathrm{inv}$ on an open subset of  $\ss^*.$ The family of such functions, for all possible choices of $v_1^*, v_2^*\in V_\lambda^*$ generate the complex directions of the polarization $\mathcal{P}_\infty.$ 
\end{proposition}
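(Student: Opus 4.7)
The proof splits naturally into (a) a routine $T_\mathrm{inv}$-invariance verification and (b) identifying the resulting functions with local holomorphic coordinates on the coisotropic reduction. First, since $\lambda\in\Lambda_+^K$ satisfies $\lambda|_{i(\tt\cap\kk)}=0$, the vector $v_\lambda^{\mathrm{hw}}:=P_\lambda v_\lambda^K$ is a non-zero highest weight vector of $V_\lambda$ and thus a $T$-eigenvector for the character $\chi_\lambda$. Since $T_\mathrm{inv}\subset T$ acts by $[x,\xi_+]\star t=[xt^{-1},\xi_+]$, and
$$
F_{\lambda,v_\lambda^K\otimes v^*}([x,\xi_+])=\langle v^*,\pi_\lambda(x)\,v_\lambda^{\mathrm{hw}}\rangle,
$$
we obtain
$$
F_{\lambda,v_\lambda^K\otimes v^*}([x,\xi_+]\star t)=\chi_\lambda(t^{-1})\,F_{\lambda,v_\lambda^K\otimes v^*}([x,\xi_+]).
$$
Numerator and denominator scale by the same factor, so the ratio is manifestly $T_\mathrm{inv}$-invariant and descends to a function on $O/T_\mathrm{inv}$.

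Next, to relate the descended function to the complex structure of $\mathcal{P}_\infty$, I would use that each fiber $\mu_\mathrm{inv}^{-1}(\xi_+)$, with $\xi_+\in -i\check\aa_+^*$, is a $T_\mathrm{inv}$-principal bundle over the coadjoint orbit $\mathcal{O}_{\xi_+}\cong U_\CC/B$ (the coisotropic reduction recalled in Section \ref{sec-fibering}), and that the complex directions of $\mathcal{P}_\infty$ on this leaf are the horizontal lifts of the holomorphic tangent bundle of $\mathcal{O}_{\xi_+}$. The matrix coefficient $x\mapsto\langle v^*,\pi_\lambda(x)v_\lambda^{\mathrm{hw}}\rangle$ extends holomorphically to $U_\CC$ and, since $v_\lambda^{\mathrm{hw}}$ is a highest weight vector, transforms by $\chi_\lambda$ under right multiplication by the Borel $B$. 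Thus it descends to a holomorphic section of the associated line bundle $L_\lambda\to U_\CC/B$, and the ratio $F_{\lambda,v_\lambda^K\otimes v_1^*}/F_{\lambda,v_\lambda^K\otimes v_2^*}$ is a holomorphic function on the open subset of $\mathcal{O}_{\xi_+}$ where the denominator is non-vanishing.

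The main obstacle I anticipate is the last step: showing that, as $\lambda$ and $v_1^*,v_2^*$ range over all admissible choices, the Hamiltonian vector fields of these ratios actually \emph{span} the complex directions of $\mathcal{P}_\infty$ pointwise. The plan is to invoke Borel--Weil to identify the functions $\langle v^*,\pi_\lambda(\cdot)v_\lambda^{\mathrm{hw}}\rangle$ with a basis of $H^0(U_\CC/B,L_\lambda)$ and to exploit $\mathrm{Cone}(\Lambda_+^K)=\aa_+^*$ to pick $\lambda\in\Lambda_+^K$ whose restriction to $\aa$ lies in the interior of the restricted Weyl chamber; for such $\lambda$ the sections separate points of $U_\CC/B$, so finitely many of their ratios give local holomorphic coordinates on $\mathcal{O}_{\xi_+}$. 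By the $T_\mathrm{inv}$-invariance from the first step, the Hamiltonian vector fields of these coordinates are tangent to the fibers of $\mu_\mathrm{inv}$ and project along $T_\mathrm{inv}$ to a spanning local frame of the holomorphic tangent bundle of $\mathcal{O}_{\xi_+}$, which is precisely the complex part of $\mathcal{P}_\infty$.
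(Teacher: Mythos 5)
Your proposal is correct and follows essentially the same route as the paper: $T_\mathrm{inv}$-equivariance of the $F_{\lambda, v_\lambda^K\otimes v^*}$ via the highest-weight component of the spherical vector, identification of these functions with the Borel--Weil sections $H^0(\mathcal{O}_{\xi_+},L_\lambda)\cong V_\lambda^*$, and the observation that their ratios generate the structure sheaf of the coadjoint orbit, whose holomorphic tangent directions give the complex part of $\mathcal{P}_\infty$. Your explicit remark that one should take $\lambda|_{\aa}$ in the interior of the restricted Weyl chamber so that the sections of $L_\lambda$ separate points is a useful precision that the paper leaves implicit in calling $L_\lambda$ ample.
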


\begin{proof}
This analogous to the proof of Proposition 4.2 of \cite{baier.hilgert.kaya.mourao.nunes:2023}. Namely, the functions $F_{\lambda, v_\lambda^K\otimes v^*}$ are $T_\mathrm{inv}$-equivariant,
$$
F_{\lambda, v_\lambda^K\otimes v^*}([x,\xi]\star t) = \chi_\lambda(t^{-1}) 
F_{\lambda, v_\lambda^K\otimes v^*}[x,\xi],\,\, t\in T_\mathrm{inv},
$$
so that their quotients will be invariant under $T_\mathrm{inv}.$ Moreover, we see that the functions 
$F_{\lambda, v_\lambda^K\otimes v^*}$ are defined on 
$U_\CC$, are $B$-equivariant  and that therefore they define  holomorphic sections of the ample 
Borel--Weil line bundle $L_\lambda\to \mathcal{O}_{\xi_+}$ on $U_\CC/B$, where $B\subset U_\CC$ is the Borel subgroup associated to the choice of positive roots.
For fixed $\lambda$ and varying $v^*\in V_\lambda^*$ we thus obtain 
$$H^0(\mathcal{O}_{\lambda},L_\lambda)\cong V_\lambda^* \cong \left\{F_{\lambda, v_\lambda^K\otimes v^*}\mid v^*\in V_\lambda^*\right\}.$$ The quotients therefore generate the structure sheaf of $\mathcal{O}_{\xi_+}$ and the Hamiltonian vector fields of their complex conjugates define an holomorphic polarization along the coisotropic reductions.  
\end{proof}

As described in Section \ref{sec-fibering}, the remaining  real directions of the mixed polarization $\mathcal{P}_\infty$ are generated by the Hamiltonian vector fields of the Guillemin--Sternberg action coordinates given by the components of $\mu_\mathrm{inv}.$

\subsection{Convergence of polarizations}
\label{subs-convpol}

We will now describe how the mixed polarization $\mathcal{P}_\infty$ arises at infinite geodesic time along a Mabuchi ray of $U$-invariant K\"ahler polarizations $\mathcal{P}_{g_t}, t>0$, associated to the $U$-invariant K\"ahler structures $\widehat I_{g_t}$, where $g_t:=g+th$ for $g,h\in \Conv^\infty_{\rm unif}(\uu^\ast)^{\Ad^\ast_U}$. 

\begin{lemma}\label{lemma-convpol}
For the Mabuchi ray of $U$-invariant K\"ahler polarizations on $T^*(U/K)_\mathrm{reg}$ given by the complex structures $\hat I_{g_t}, t>0$, one has, for $\lambda\in \hat U_K, v^*\in V_\lambda^*$:
\begin{enumerate}
\item[(i)]
$
\lim_{t\to +\infty} e^{-i\langle\lambda, \mathcal{L}_{g_t \circ \mu_\mathrm{inv}}\rangle }f^{g_t}_{\lambda, v_\lambda^K\otimes v^*}  = 
F_{\lambda, v_\lambda^K\otimes v^*},
$

\item[(ii)]
$
\lim_{t\to +\infty} \frac{1}{t} \mathrm{ln} f^{g_t}_{\lambda, v_\lambda^K\otimes v^*} = 
i\langle\lambda, \mathcal{L}_h\circ \mu_\mathrm{inv}\rangle.
$
\end{enumerate}
\end{lemma}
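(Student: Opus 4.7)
The plan is to base both assertions on the finite $\aa$-weight expansion contained in Proposition \ref{fourierharmonics}, summed back over the Fourier components,
\[
 f^{g_t}_{\lambda, v_\lambda^K \otimes v^*}([x,\xi_+]) = \sum_{\nu} e^{i\langle \nu,\, d_{\xi_+} g_t \rangle}\,\tr\bigl(\pi_\lambda(x)\, P_\nu\, v_\lambda^K \otimes v^*\bigr),
\]
where the sum runs over the $\aa$-weights $\nu$ of $V_\lambda$ (a finite set) and $d_{\xi_+}g_t = d_{\xi_+}g + t\,d_{\xi_+}h$. Applying Remark \ref{rmk-legendreok2} separately to $g$ and $h$, both Legendre images $d_{\xi_+}g$ and $d_{\xi_+}h$ lie strictly in the open chamber $-i\check\aa_+$ whenever $[x,\xi_+] \in T^*(U/K)_\mathrm{reg}$, so the full $t$-dependence of each summand is encoded in its exponential.

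For part (i), I would factor the highest-weight exponential out of the sum,
\[
 e^{-i\langle \lambda,\, d_{\xi_+}g_t\rangle}\, f^{g_t}_{\lambda, v_\lambda^K\otimes v^*}([x,\xi_+]) = F_{\lambda, v_\lambda^K\otimes v^*}([x,\xi_+]) + \sum_{\nu\neq \lambda} e^{i\langle \nu-\lambda,\, d_{\xi_+}g_t\rangle}\tr\bigl(\pi_\lambda(x) P_\nu v_\lambda^K \otimes v^*\bigr),
\]
and check that each term in the residual sum vanishes as $t \to +\infty$. For any $\aa$-weight $\nu \neq \lambda$ occurring in $V_\lambda$, the difference $\lambda-\nu$ is a nonzero nonnegative integer combination of positive restricted roots in $\Sigma_+$: indeed, every $\tt$-weight of $V_\lambda$ differs from $\lambda$ by a nonnegative combination of simple roots in $\Delta$, and restriction to $\aa$ kills the contributions of $\Delta_0$ while leaving a nonnegative combination of simple restricted roots which is nontrivial exactly because $\nu \neq \lambda$. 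Pairing against $d_{\xi_+}h \in -i\check\aa_+$ then forces the real part of $i\langle \nu-\lambda, d_{\xi_+}h\rangle$ to be strictly negative, so each off-diagonal exponent decays like $e^{-c_\nu t}$ with $c_\nu > 0$, proving (i).

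For part (ii), I would rewrite the identity as
\[
 f^{g_t}_{\lambda, v_\lambda^K\otimes v^*}([x,\xi_+]) = e^{i\langle \lambda,\, d_{\xi_+}g_t\rangle}\,\bigl(F_{\lambda, v_\lambda^K\otimes v^*}([x,\xi_+]) + O(e^{-c t})\bigr),
\]
take logarithms on the open dense locus where $F_{\lambda, v_\lambda^K\otimes v^*}$ is nonvanishing, and divide by $t$. Since $\tfrac{1}{t}\,d_{\xi_+}g_t \to d_{\xi_+}h$ and the correction $\tfrac{1}{t}\ln(F + O(e^{-ct}))$ vanishes, the limit equals $i\langle \lambda, d_{\xi_+}h\rangle = i\langle \lambda, \mathcal{L}_h \circ \mu_\mathrm{inv}\rangle([x,\xi_+])$, which is (ii).

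The main technical point is the strict positivity of the decay rates $c_\nu$. This rests on two facts working in tandem: that $d_{\xi_+}h$ lies strictly inside the positive restricted chamber (uniform convexity of $h$ together with Remark \ref{rmk-legendreok2}), and that $K$-sphericity of $\lambda$ makes $\lambda$ the unique maximal $\aa$-weight of $V_\lambda$, so the restricted-root decomposition of $\lambda - \nu$ is genuinely nontrivial for every $\aa$-weight $\nu \neq \lambda$. Everything else is routine manipulation of the finite exponential sum.
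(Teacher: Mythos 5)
Your argument is correct and is essentially the same computation that the paper's one-line proof delegates to Lemma 4.8 and equation (30) of \cite{baier.hilgert.kaya.mourao.nunes:2023}: expand $\pi_\lambda(e^{id_{\xi_+}g_t})$ over $\aa$-weight spaces, isolate the top term, and kill the rest using that $\lambda-\nu$ is a nonzero nonnegative combination of positive (restricted) roots paired against $d_{\xi_+}h$ in the open chamber. You correctly supply the two details specific to the symmetric-space setting --- passing from $\tt$-weights to restricted $\aa$-weights via sphericity of $\lambda$, and restricting (ii) to the locus where $F_{\lambda,v_\lambda^K\otimes v^*}$ is nonvanishing --- so nothing further is needed.
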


\begin{proof}
    This follows from Lemma 4.8 and equation $(30)$ in \cite{baier.hilgert.kaya.mourao.nunes:2023}.
\end{proof}

\begin{theorem}
    The family of K\"ahler polarizations $\mathcal{P}_{g_t}, t>0,$ on $T^*(U/K)_\mathrm{reg}$ converges pointwise to the mixed polarization $\mathcal{P}_\infty$ as $t\to +\infty.$
\end{theorem}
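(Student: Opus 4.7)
The plan is to exhibit, at every $p \in T^*(U/K)_\mathrm{reg}$, a family of sections of $\mathcal{P}_{g_t}$ whose values at $p$ converge in the Lagrangian Grassmannian of $T_p^\CC T^*(U/K)_\mathrm{reg}$ to a frame of $\mathcal{P}_{\infty,p}$. A crucial simplification is that all of the K\"ahler structures $\widehat{I}_{g_t}$ share the single symplectic form $\omega_{\mathrm{std}}$, so the Hamiltonian vector field $X_{\bar f}$ depends continuously on $f$ alone; it therefore suffices to find $\widehat{I}_{g_t}$-holomorphic functions whose smooth limits supply a complete set of generators of $\mathcal{P}_\infty$.

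For the complex directions of $\mathcal{P}_\infty$ I would use the ratios
$R^{g_t}_{\lambda, v_1^*, v_2^*} := f^{g_t}_{\lambda, v_\lambda^K \otimes v_1^*}/f^{g_t}_{\lambda, v_\lambda^K \otimes v_2^*}$,
which are $\widehat{I}_{g_t}$-holomorphic where the denominator is nonzero, so $X_{\overline{R^{g_t}_{\lambda,v_1^*,v_2^*}}}$ is a section of $\mathcal{P}_{g_t}$. By Lemma \ref{lemma-convpol}(i), the common exponential prefactor $e^{i\langle\lambda,\mathcal{L}_{g_t}\circ \mu_\mathrm{inv}\rangle}$ cancels in the quotient, so these ratios converge as $t\to +\infty$ to the corresponding ratios of $F_{\lambda, v_\lambda^K \otimes v^*}$; by the Borel--Weil argument already invoked in Proposition \ref{prop-holomorphicdirections}, for varying $\lambda \in \hat U_K$ and $v_i^* \in V_\lambda^*$ the limit ratios generate the function field of the coadjoint orbit $\mathcal{O}_{\mu_\mathrm{inv}(p)}$, so their differentials exhaust the $(1,0)$-cotangent space of this orbit at $p$ and their conjugate Hamiltonian vector fields span the complex part of $\mathcal{P}_{\infty,p}$. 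For the transverse real directions I would invoke Lemma \ref{lemma-convpol}(ii): the local $\widehat{I}_{g_t}$-holomorphic functions $\frac{1}{t}\ln f^{g_t}_{\lambda, v_\lambda^K \otimes v^*}$ converge to $i\langle \lambda, \mathcal{L}_h \circ \mu_\mathrm{inv}\rangle$, a pullback from $\tt^*_+$ through $\mu_\mathrm{inv}$. Its Hamiltonian vector field is hence a linear combination of Hamiltonian vector fields of components of $\mu_\mathrm{inv}$, i.e. a real-direction generator of $\mathcal{P}_\infty$; since $\mathrm{Cone}(\Lambda^K_+) = \aa^*_+$, picking $\lambda^{(1)}, \ldots, \lambda^{(l)} \in \hat U_K$ whose restrictions to $\aa$ form a basis of $\aa^*$ produces $l$ such limits spanning the real part of $\mathcal{P}_{\infty,p}$.

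Combining the two families yields $n = \dim_\CC T^*(U/K)$ sections of $\mathcal{P}_{g_t}$ whose pointwise limits at $p$ span $\mathcal{P}_{\infty,p}$, which gives convergence in the Grassmannian. The main technical difficulty will be the nondegeneracy of the limit frame: one must ensure that the $l$ ``radial'' limit vector fields coming from Lemma \ref{lemma-convpol}(ii) are linearly independent from the orbitwise complex ones coming from (i), and that finitely many ratios suffice to exhaust the full $(1,0)$-cotangent space of the coadjoint orbit at $p$. Both follow from the structure of $T^*(U/K)_\mathrm{reg}$ as the total space of a $T_\mathrm{inv}$-bundle over the fibration by coadjoint orbits — the components of $\mu_\mathrm{inv}$ have linearly independent differentials transverse to the orbit directions on the regular stratum, and the Peter--Weyl/Borel--Weil theory spans the orbitwise tangent space — while any isolated degeneracy arising from a chosen denominator $f^{g_t}_{\lambda, v_\lambda^K \otimes v_2^*}(p)$ vanishing can be removed by perturbing $v_2^*$ so that $F_{\lambda, v_\lambda^K \otimes v_2^*}(p)\neq 0$, which forces nonvanishing for all sufficiently large $t$.
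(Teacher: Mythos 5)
Your proposal is correct and follows essentially the same route as the paper: the complex directions are obtained from the Hamiltonian vector fields of the ratios $f^{g_t}_{\lambda, v_\lambda^K\otimes v^*}/f^{g_t}_{\lambda, v_\lambda^K\otimes w^*}$ converging via Lemma \ref{lemma-convpol}(i) to the ratios of the $F_{\lambda, v_\lambda^K\otimes v^*}$ handled by Proposition \ref{prop-holomorphicdirections}, and the real directions from the logarithmic derivatives $(f^{g_t})^{-1}X_{f^{g_t}}$ converging via Lemma \ref{lemma-convpol}(ii) to the span of the Hamiltonian vector fields of the components of $\mu_\mathrm{inv}$. Your additional remarks on nondegeneracy of the limit frame and on avoiding vanishing denominators are sensible elaborations of points the paper leaves implicit.
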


\begin{proof}
For the real directions of $\mathcal{P}_\infty$, we see from (ii) in Lemma \ref{lemma-convpol} that the distribution generated by the vector fields
$$
(f^{g_t}_{\lambda, v_\lambda^K\otimes v^*})^{-1} X_{f^{g_t}_{\lambda, v_\lambda^K\otimes v^*}},
$$
for $\lambda\in \hat U_K, v^*\in V_\lambda^*$, 
converges, as $t\to +\infty$, pointwise to the distribution generated by the Hamiltonian vector fields of the components of $\mu_\mathrm{inv}.$ On the other hand, from (i) in Lemma \ref{lemma-convpol}, we see that
the distribution generated by the Hamiltonian vector fields
$$
X_{\frac{f^{g_t}_{\lambda, v_\lambda^K\otimes v^*}}{f^{g_t}_{\lambda, v_\lambda^K\otimes w^*}}},\, 
\lambda\in \hat U_K, v^*,w^*\in V_\lambda^*,
$$
converges, as $t\to +\infty$, to the distribution generated by the Hamiltonian vector fields
$$
X_{\frac{F_{\lambda, v_\lambda^K\otimes v^*}}{F_{\lambda, v_\lambda^K\otimes w^*}}},\, 
\lambda\in \hat U_K, v^*,w^*\in V_\lambda^*
$$
which, from Proposition \ref{prop-holomorphicdirections}, gives the holomorphic directions of $\mathcal{P}_\infty.$
\end{proof}

\subsection{The half-form bundle for ${\mathcal P}_\infty$}
\label{subsec-half}

In this section, we will study the half-form bundle for $\mathcal{P}_\infty.$ To do so, let us 
describe the sections of the canonical bundle for the K\"ahler polarizations $\mathcal{P}_{g_t}, t>0.$ 

Recall from \cite{akhiezer:1986} that homogeneous  holomorphic vector bundles on $U_\CC/K_\CC$, that is vector bundles with a lift of the $U_\CC$-action on the base, are in bijective correspondence with holomorphic representations of $K_\CC.$ Sections of a homogeneous line bundle $L_\chi \to U_\CC/K_\CC$, defined by the character $\chi$ of $K_\CC$, are therefore 
in bijection with $\chi$-equivariant functions on $U_\CC$, that is with functions
$f:U_\CC \to \CC$ such that 
$$
f(u\cdot k) = \chi(k)^{-1} f(u),\, u\in U_\CC, k\in K_\CC.
$$
The canonical bundle $\mathcal{K}$ of $U_\CC/K_\CC$ is associated to the character defined by, see \cite{akhiezer:1986},
$$
\det \left(\Ad_h: \ss_\CC \to \ss_\CC\right), \, h\in K_\CC.
$$
Since this determinant gets a contribution from both positive and negative roots for the adjoint action of $\kk_\CC$ it defines the trivial character. (Note that these roots are with respect to a choice of Cartan subalgebra for $\uu$ which extends a Cartan subalgebra for $\kk$ and which in general will be different from $\tt$.) 
Therefore, $\mathcal{K}$ is trivializable.

Recall from Theorem \ref{th-4} that for $g\in \Conv^\infty_{\rm unif}(\uu^\ast)^{\Ad^\ast_U}$ the Legendre transform $\mathcal{L}_g$ provides a biholomorphism between $(T^*(U/K), \hat I_g)$ and $U_\CC/K_\CC$,
 and therefore also the canonical bundle $K_{g}:=\mathcal{L}_g^* \mathcal{K}$ of $(T^*(U/K), \hat I_g)$, is trivializable.

Holomorphic trivializing sections for $\mathcal{K}$ and $K_g$ can be obtained by making use of the transitive holomorphic left action of $U_\CC$, as follows. (See Proposition 3.3.22 in \cite{kaya:2015}.)
On a sufficiently small open neighborhood $A$ of 
$[e]\in U_\CC/K_\CC$ one can choose a local section $s:A \to U_\CC$ of the canonical projection $p:U_\CC \to U_\CC/K_\CC$, identifying 
$$
T_{[e]} (U_\CC/K_\CC) \cong \ss \oplus i\ss \cong \ss_\CC.
$$
Given a basis of $\ss^*$, one can then define a holomorphic form of top degree on $T^*_{[e]}(U_\CC/K_\CC)$. When $K$ is connected, as in the case that we are considering, this form can then be extended to a global holomorphic form of top degree, $\Omega$, on the whole of $U_\CC/K_\CC$ by imposing invariance under the holomorphic transitive left action of $U_\CC$. (Connectedness of $K$ ensures that the right action of $K_\CC$ preserves $\Omega$, see Proposition 3.2.33 in \cite{kaya:2015}.) $\Omega$ then defines a global holomorphic frame for $\mathcal{K}\to U_\CC/K_\CC$, so that 
$\mathcal{L}_g^* \Omega$
is a global $U_\CC$-left invariant holomorphic trivializing frame for $K_g\to T^*(U/K)$.

Letting $\dim K =n, \dim U = d+n$, let us consider a basis of $U$-left-invariant one-forms on $U$, $\omega^j, j=1, \dots, d+n$, dual to an orthonormal basis of $\uu$ adapted to the decomposition $\uu = \kk \oplus \ss$ that is, such that $(\omega^j)_{\vert_{\kk}}=0$ for $j=1, \dots d$ and $(\omega^j)_{\vert_{\ss}}=0$ for $j=d+1, \dots, d+n$. 

\begin{lemma}\label{leftinvforms} (Lemma 3.3 in \cite{kirwin.mourao.nunes:2013})
    A $U_\CC$-left-invariant holomorphic frame of one-forms on $(T^*U,I_g)$ is given by
    $$
    \Omega^j_g (x,\xi):= \sum_{k=1}^{d+n} \left[
    e^{-i \mathrm{ad}_{d_{\xi}g}}\right]_k^j 
    \omega^k + \left[
    \frac{1-e^{-i \mathrm{ad}_{d_{\xi}g}}}{\mathrm{ad}_{d_{\xi}g}}\cdot \mathrm{Hess}_g(\xi)\right]_k^j d \xi^k,
    $$
    $j=1, \dots, d+n.$
    This is obtained by pulling-back a frame of left $U_\CC$-invariant holomorphic one-forms on $U_\CC$ by the Legendre transform 
    $\mathcal{L}_g.$
\end{lemma}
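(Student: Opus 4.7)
The strategy is to identify the forms $\Omega^j_g$ directly as the pullbacks under $\mathcal{L}_g$ of the standard frame of left $U_\CC$-invariant holomorphic one-forms on $U_\CC$, and then to unwind that pullback explicitly. Since Theorem~\ref{th-4} (and the construction preceding it) establishes that $\mathcal{L}_g$ is a biholomorphism from $(T^*U, I_g)$ to $U_\CC$, any frame obtained in this way is automatically $I_g$-holomorphic, and is left $U_\CC$-invariant with respect to the $U_\CC$-action on $T^*U$ transported from left multiplication on $U_\CC$ via $\mathcal{L}_g$. Linear independence of $\{\Omega^j_g\}$ at each point will follow from linear independence of the Maurer--Cartan frame on $U_\CC$ together with $\mathcal{L}_g$ being a diffeomorphism.

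Concretely, let $\theta = g_\CC^{-1} dg_\CC$ denote the Maurer--Cartan form on $U_\CC$ and let $\theta^j$ be its components along the orthonormal basis $\{X_j\}_{j=1}^{d+n}$ of $\uu$ (complexified to $\uu_\CC$); these $\theta^j$ are the standard left-invariant holomorphic frame on $U_\CC$. Writing $\mathcal{L}_g(x,\xi) = x\, e^{i\, d_\xi g}$ and applying the Leibniz rule for $g_\CC^{-1}dg_\CC$, one obtains
$$
\mathcal{L}_g^* \theta \;=\; \mathrm{Ad}\!\left(e^{-i\, d_\xi g}\right)\!\left(x^{-1} dx\right) \;+\; e^{-i\, d_\xi g}\, d\!\left(e^{i\, d_\xi g}\right).
$$
In the first summand, $\mathrm{Ad}(e^{-i\, d_\xi g}) = e^{-i\,\mathrm{ad}_{d_\xi g}}$, and the components of the Maurer--Cartan form $x^{-1}dx$ of $U$ in the chosen basis are precisely the $\omega^k$, so this contribution gives the first term in the claimed expression for $\Omega^j_g$.

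For the second summand, the plan is to invoke the standard derivative-of-exponential identity
$$
e^{-X} d(e^X) \;=\; \frac{1-e^{-\mathrm{ad}_X}}{\mathrm{ad}_X}\, dX
$$
with $X = i\, d_\xi g$. Since $d(d_\xi g) = \mathrm{Hess}_g(\xi) \cdot d\xi$ in components, the factor of $i$ in $\mathrm{ad}_X = i\,\mathrm{ad}_{d_\xi g}$ cancels the one coming from $dX$, producing exactly
$\frac{1-e^{-i\,\mathrm{ad}_{d_\xi g}}}{\mathrm{ad}_{d_\xi g}}\cdot \mathrm{Hess}_g(\xi)\, d\xi$,
which is the second term in the statement. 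Summing the two contributions reproduces $\Omega^j_g$.

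The computation is essentially a bookkeeping exercise once the derivative-of-exponential identity is available, so I do not expect a serious obstacle. The only delicate point is interpretational: $\mathrm{Hess}_g(\xi)$ is a priori a symmetric bilinear form on $\uu^*$, and must be read as an endomorphism of (a complement of) $\uu_\CC$ via the fixed invariant inner product $\langle\cdot,\cdot\rangle_\uu$ so that its composition with $\frac{1-e^{-i\,\mathrm{ad}_{d_\xi g}}}{\mathrm{ad}_{d_\xi g}}$ makes sense and the matrix-coefficient notation $[\,\cdot\,]^j_k$ is consistent with that for the $\omega^k$-term.
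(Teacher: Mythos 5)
Your derivation is correct and is exactly the standard computation behind this formula: the paper itself offers no proof, delegating to Lemma 3.3 of \cite{kirwin.mourao.nunes:2013}, and the argument there is precisely your pullback of the holomorphic Maurer--Cartan frame via $\mathcal{L}_g(x,\xi)=xe^{i d_\xi g}$, the Leibniz rule, $\mathrm{Ad}(e^{-iX})=e^{-i\,\mathrm{ad}_X}$, and the derivative-of-exponential identity, with $d(d_\xi g)=\mathrm{Hess}_g(\xi)\,d\xi$ read as a $\uu$-valued form. Your closing remark on interpreting $\mathrm{Hess}_g(\xi)$ as a map $\uu^*\to\uu$ (so that composition with the $\mathrm{ad}$-series and the index notation are consistent) is the right and only delicate point.
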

{}

The left actions of $U_\CC$ on itself and $U_\CC/K_\CC$, and the  corresponding actions on $T^*U$ and $T^*(U/K)$ induced by the respective Legendre transforms $\mathcal{L}_g$ are, of course, all compatible.
We have then, 

\begin{proposition}\label{prop-leftinvcanonical} On $T^*U$,
\begin{equation}\label{leftinvcanonical}
\Omega_g := (\mathcal{L}_g^* \circ p^*) \Omega = \bigwedge_{j=1}^d \Omega^j_g.
\end{equation}
\end{proposition}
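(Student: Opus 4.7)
The plan is to reduce the identity on $T^*U$ to a statement on $U_\CC$, which is then an elementary left-invariance argument. By Lemma \ref{leftinvforms}, for each $j$ we have $\Omega^j_g = \mathcal{L}_g^* \eta^j$, where $\eta^1,\dots,\eta^{d+n}$ is the frame of left $U_\CC$-invariant holomorphic one-forms on $U_\CC$ dual, at the identity, to the chosen orthonormal basis of $\uu_\CC = \ss_\CC \oplus \kk_\CC$ (the first $d$ being dual to $\ss_\CC$, the last $n$ to $\kk_\CC$). Since pullback commutes with wedge products, one has
\[
 \bigwedge_{j=1}^d \Omega^j_g \;=\; \mathcal{L}_g^*\!\left(\bigwedge_{j=1}^d \eta^j\right),
\]
so the proposition reduces to the identity $p^*\Omega = \bigwedge_{j=1}^d \eta^j$ on $U_\CC$.

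Next I would argue that both sides of this reduced identity are left $U_\CC$-invariant holomorphic $d$-forms on $U_\CC$. The right-hand side is left-invariant by construction of the $\eta^j$. The left-hand side is left-invariant because $\Omega$ is left $U_\CC$-invariant by definition (it was obtained by propagating a top form at $[e]$ via the holomorphic transitive left $U_\CC$-action on $U_\CC/K_\CC$) and $p: U_\CC \to U_\CC/K_\CC$ intertwines the two left actions. It therefore suffices to compare the two forms at the identity $e \in U_\CC$.

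At $e$, the evaluation $\bigwedge_{j=1}^d \eta^j\vert_e$ is, by the choice of basis, the element of $\bigwedge^d \uu_\CC^*$ that generates $\bigwedge^d \ss_\CC^*$ and vanishes on any vector in $\kk_\CC$. On the other hand, $p^*\Omega\vert_e = (dp_e)^* \Omega_{[e]}$, and $dp_e: \uu_\CC \to T_{[e]}(U_\CC/K_\CC) \cong \ss_\CC$ is exactly the projection along $\kk_\CC$ used in the construction of $\Omega$; by definition, $\Omega_{[e]}$ is the top form on $\ss_\CC$ associated with the fixed basis of $\ss^*$ (extended $\CC$-linearly). Hence the two forms agree at $e$, and by left $U_\CC$-invariance they agree on all of $U_\CC$, which gives the claimed identity after pulling back by $\mathcal{L}_g$.

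The only real subtlety is bookkeeping: one must make sure that the basis used to define $\Omega$ at $[e]$ really is the restriction to $\ss_\CC$ of the basis dualized by $\eta^1,\dots,\eta^d$ (otherwise the equality would only hold up to a nonzero scalar, which would still suffice for all later uses of $\Omega_g$ as a trivializing frame). Given the global setup fixed in Section \ref{subsec-half} this amounts to a consistent choice of the basis of $\ss^*$, so there is no substantive geometric obstacle beyond matching conventions.
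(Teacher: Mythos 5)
Your proof is correct and follows essentially the same strategy as the paper: both sides of (\ref{leftinvcanonical}) are left $U_\CC$-invariant, so everything reduces to a comparison at a single point. The only real difference is where that comparison happens. The paper evaluates directly at $(e,0)\in T^*U$, which forces it to use the explicit formula of Lemma \ref{leftinvforms} together with $d_0g=0$ (a consequence of the $\sigma$-compatibility of $g$ from Remark \ref{gissigmacompatible}) and the fact that $\Hess_g$ preserves $\kk^*\oplus\ss^*$, and then invokes Kaya's Proposition 3.3.22 for $\Omega=\bigwedge_j\Omega^j$; you instead factor the whole identity through $\mathcal{L}_g^*$ using the second sentence of Lemma \ref{leftinvforms} and compare at $e\in U_\CC$, which sidesteps that pointwise computation entirely and makes the argument independent of the $\sigma$-compatibility of $g$. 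Your closing caveat about matching the normalization of the basis defining $\Omega$ with the basis dual to $\eta^1,\dots,\eta^d$ is exactly the right point to flag --- the paper leaves the same convention implicit, and as you note a scalar discrepancy would be harmless for the later use of $\Omega_g$ as a trivializing frame.
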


{}

\begin{proof}
The forms on both sides of (\ref{leftinvcanonical}) are $U_\CC$-left-invariant and are therefore defined by their values at $(e,0)\in 
T^*U$, where $e\in U$ is the identity. 
From Remark \ref{gissigmacompatible} we have that 
$\xi\in \ss^* \iff d_\xi g \in \ss$ and $\xi\in \kk^* \iff d_\xi g\in \kk^*$. This implies, in particular, that $d_{\xi=0}g=0$.\footnote{Recall that we are assuming that $g$ is compatible with the symmetric space involution $\sigma$ as in Remark \ref{gissigmacompatible}.} From (c) in Proposition 2.7 in \cite{baier.hilgert.kaya.mourao.nunes:2023}, we have that, as endomorphisms of $\uu^*$, 
$$
\ad^*_{\xi^*} = \Hess^{-1}_g (\xi) \ad^*_{d_\xi g},
$$
where $\xi^*\in \uu$ corresponds to $\xi\in \uu^*$ via the invariant form on $\uu$.
It follows that the endomorphism $\Hess_g(\xi)$ preserves the decomposition $\uu^*=\kk^* \oplus \ss^*$ for any $\xi\in \uu^*$. Therefore, since $[\ss,\kk]\subset \kk$ and $[\ss,\ss]\subset \kk$, we have that at $(e,0)$, for $j=1, \dots d,$
$$
\Omega^j_g (e,0)= \omega^j + i \sum_{k=1}^d [\Hess_g(0)]^j_k d\xi^k = 
(\mathcal{L}_g^* \circ p^*)\Omega^j (e), 
$$
where $e$ also denotes the identity in $U_\CC$ and  
where $\Omega^j$ is the $U_\CC$-left-invariant 1-form on $U_\CC/K_\CC$ such that $(p^* \Omega^j)(e) = d\xi^j + id\xi^j$. Then, from Proposition 3.3.22 in \cite{kaya:2015}, we get 
$$
\Omega = \bigwedge_{j=1}^d \Omega^j.
$$
This ends the proof.
\end{proof}

\begin{remark}
    Note that each individual $\Omega^j_g$ is not the pull-back of an holomorphic one-form on $U_\CC/K_\CC$ since $K_\CC$ will not act trivially on it. 
\end{remark} 

{} 

Note that, while the form $\Omega$ is fixed, the left $U_\CC$ actions on $T^*U$ and  $T^*(U/K)$ are induced by the Legendre transforms $\mathcal{L}_g$ and therefore depend on the choice of $g$. For that reason, to study the behavior of $\Omega_{g+th}$ as $t\to +\infty$ for the families of K\"ahler structures considered in Section \ref{subs-convpol}, it is not enough to consider the restriction to the identity coset $[e]\in U_\CC/K_\CC.$ On the other hand, the restriction of $p\circ \mathcal{L}_g$ to $U\times \ss^*$ is surjective onto $U_\CC/K_\CC$ so that it is enough to study the form 
$\Omega_g$ in (\ref{leftinvcanonical}) along $U\times \ss^*.$ Along $U\times \ss^*$, since $j$ also runs from $1$ to $d$, only terms with even powers of $\mathrm{ad}_{d_{\xi}g}$ contribute, again due to the conditions $[\ss,\kk]\subset \kk$ and $[\ss,\ss]\subset \kk$. We can, moreover, using the $\mathrm{Ad}^*_K$-action, restrict the study of the behavior of $\Omega_g$ even further to $U\times (-i)\aa^*_+.$

Let, for $j=1, \dots, d$ and $\xi_+\in (-i)\aa^*$,
\begin{equation}\label{leftformsdown}
    \tilde\Omega^j_g (x,\xi_+):= \sum_{k=1}^d \left[\cosh
     (i\mathrm{ad}_{d_{\xi}g})\right]_k^j 
    \omega^k + \left[
    \frac{\sinh(i \mathrm{ad}_{d_{\xi}g})}{\mathrm{ad}_{d_{\xi}g}}\cdot \mathrm{Hess}_g(\xi)\right]_k^j d\xi^k, 
\end{equation}
so that over $U\times (-i)\aa^*$ we have
\begin{equation}\label{tildepullback}
\Omega_g = \bigwedge_{j=1}^d \tilde \Omega^j_g. 
\end{equation}

\begin{remark}Note that under the involution 
$(\mathrm{Id_U\times \sigma)}$ on $T^*U$,  which descends to the anti-holomorphic involution $[x,\xi]\mapsto [x,-\xi]$ on $T^*(U/K)$, see \cite{stenzel:1990}, and using Remarks \ref{gissigmacompatible}, \ref{legendreok} and \ref{rmk-legendreok2}, we do obtain that $(\mathrm{Id}_U\times \sigma)^* \tilde \Omega^j_g =\overline{\tilde \Omega^j_g}.$ On the other hand, the involution $\sigma$ on $\uu$ lifts to a holomorphic involution $\sigma$ of $T^*(U/K)$ which acts as $[x,\xi]\mapsto [\sigma(x),-\xi]$ where $x\mapsto \sigma(x)$ is the involution on $U$ induced from $\sigma$ by the exponential map. In particular, at the points $(e,\xi)$ it is easy to check that the pull-back of $\tilde\Omega^j_g$ under the lift of this involution to $T^*U$ is $-\tilde\Omega^j_g$, consistently with $\sigma$ being holomorphic. 
\end{remark}
{}
We then have, with $g_t = g+th, t\geq 0$, 

\begin{proposition}\label{limitcanonicalsection}
On $T^*U_\mathrm{reg} \cap (U\times (-i)\aa^*)$, 
\begin{equation}\label{limitcanform}
\lim_{t\to +\infty} t^{-l} e^{-2 \langle\hat\rho, d_{\tilde \xi_+}g_t\rangle} \Omega_{g_t} (x,\xi_+)= 
c_0 i^l 2^{-\#(\Sigma\cap \Phi_+)} \det \left(\mathrm{Hess}_h(\tilde \xi_+)\right)_{i\aa}  \hat\Omega_\infty,
\end{equation}
where $\hat\rho$ is the half-sum of the positive restricted roots weighted by multiplicity, $c_0$ is a nonzero constant, and 
where
\begin{equation}\label{tildeomegainfty}
\hat \Omega_\infty:= \bigwedge_{j=1}^l d \xi^j\cdot
\bigwedge_{\alpha\in \Sigma\cap \Phi_+}(\omega^\alpha-\omega^{\alpha^\sigma}-\langle\alpha,\xi_+\rangle^{-1} (d\xi^\alpha-d\xi^{\alpha^\sigma})).
\end{equation}
The right-hand sides of (\ref{limitcanform}) and (\ref{tildeomegainfty}) 
define smooth form on $T^*U_{\mathrm{reg}}$ by the $K$ right-invariant extension from $U\times (-i)\aa^*.$
\end{proposition}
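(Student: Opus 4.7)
The plan is to analyze $\Omega_{g_t}|_{U\times(-i)\aa^*}=\bigwedge_{j=1}^d\tilde\Omega^j_{g_t}$ in a basis of $\ss$ adapted to the restricted root decomposition (\ref{e-rest-root-space-dec}). The key point is that for $\xi_+\in -i\aa^*$ one has $d_{\xi_+}g_t\in -i\aa\subset\tt_\CC$, so $\mathrm{ad}_{d_{\xi_+}g_t}^2$ acts on each line $\CC(E_\alpha-\theta E_\alpha)\subset\ss_\CC$ by the scalar $\alpha(\beta_t)^2$, with $\beta_t:=i\,d_{\xi_+}g_t\in\aa$. Consequently $\cosh(i\mathrm{ad}_{d_{\xi_+}g_t})$ and $\sinh(i\mathrm{ad}_{d_{\xi_+}g_t})/\mathrm{ad}_{d_{\xi_+}g_t}$ act there by the scalars $\cosh(\alpha(\beta_t))$ and $i\sinh(\alpha(\beta_t))/\alpha(\beta_t)$, respectively, and reduce to $1$ and $i$ on the $i\aa$-block; for $\xi_+$ in the regular stratum, $\alpha(\beta_t)$ vanishes on $\Phi_0$ and grows linearly in $t$ on $\Phi\setminus\Phi_0$ with sign fixed by the chamber.

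I would then choose a real basis $\{e_1,\dots,e_d\}$ of $\ss$ whose first $l$ vectors span $i\aa$ and whose last $N_+=d-l$ vectors index $\alpha\in(\Phi\setminus\Phi_0)\cap\Phi_+$ via the real subspace of $\ss$ spanned by $E_\alpha-\theta E_\alpha$. Using that $\mathrm{Hess}_{g_t}(\xi_+)$ is $\mathrm{Ad}^*_T$-equivariant and hence preserves the root-space decomposition, $\tilde\Omega^j_{g_t}$ is block-diagonal in this basis. For $j\le l$ one has $\tilde\Omega^j_{g_t}=\omega^j+i\sum_{k=1}^l[\mathrm{Hess}_{g_t}(\xi_+)|_{i\aa^*}]^j_k\, d\xi^k$, and the $l$-fold wedge produces, to leading order, the factor $i^l\,t^l\det(\mathrm{Hess}_h(\tilde\xi_+)|_{i\aa^*})\bigwedge_{j=1}^l d\xi^j$, which supplies the $t^l$ absorbed by the $t^{-l}$ rescaling, as well as the $i^l$ and the determinant appearing in (\ref{limitcanform}). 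For $j$ indexing a root $\alpha\in(\Phi\setminus\Phi_0)\cap\Phi_+$, the corresponding $1$-form behaves asymptotically as $\tfrac12 e^{\alpha(\beta_t)}\bigl[(\omega^\alpha-\omega^{\alpha^\sigma})+i\,\alpha(\beta_t)^{-1}h_\alpha(\xi_+,t)\,(d\xi^\alpha-d\xi^{\alpha^\sigma})\bigr]$, where $h_\alpha(\xi_+,t)$ is the scalar by which $\mathrm{Hess}_{g_t}(\xi_+)$ acts on $\uu_\alpha$, and the combinations $\omega^\alpha-\omega^{\alpha^\sigma}$, $d\xi^\alpha-d\xi^{\alpha^\sigma}$ arise as the dual forms to the $\ss$-projection of $E_\alpha-\theta E_\alpha$.

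Collecting contributions: the exponents sum to $\sum_{\alpha\in(\Phi\setminus\Phi_0)\cap\Phi_+}\alpha(\beta_t)=2\langle\hat\rho,\beta_t\rangle=2\langle\hat\rho,d_{\tilde\xi_+}g_t\rangle$, reproducing the exponential normalization; the $N_+=\#(\Sigma\cap\Phi_+)$ factors of $\tfrac12$ combine to $2^{-\#(\Sigma\cap\Phi_+)}$; and the residual factor $\alpha(\beta_t)^{-1}\cdot t\,h_\alpha(\xi_+,t)$ converges as $t\to\infty$ to $\langle\alpha,\xi_+\rangle^{-1}$, using the intertwining relation $\mathrm{ad}_{d_\xi h}=\mathrm{Hess}_h\circ\mathrm{ad}_\xi$ from Proposition 2.7 of \cite{baier.hilgert.kaya.mourao.nunes:2023} together with the $\sigma$-compatibility of $h$ from Remark \ref{gissigmacompatible}, so that the Hessian scalar $h_\alpha$ cancels against the one hidden inside the intertwiner. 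Since every regular $K$-orbit in $\ss^*_\mathrm{reg}$ meets $-i\aa^*$, the resulting limit form extends smoothly from $U\times(-i)\aa^*$ to all of $T^*U_\mathrm{reg}$ by $K$-right-invariance.

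The main obstacle will be verifying the cancellation and the overall normalization $c_0$: computing the precise scalar action of $\mathrm{Hess}_h(\xi_+)$ on each $\uu_\alpha$, keeping careful track of the factors of $i$ produced by $\sinh/\mathrm{ad}$ on the root blocks versus the $i\aa$-block, and checking that the dual $1$-forms to the chosen real basis of $\ss$ yield precisely $\omega^\alpha-\omega^{\alpha^\sigma}$ and $d\xi^\alpha-d\xi^{\alpha^\sigma}$ in the exact form stated, rather than some other $\sigma$-antisymmetric combination.
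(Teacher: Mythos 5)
Your proposal follows essentially the same route as the paper's proof: decompose $\ss_\CC$ via the restricted root spaces, note that $\mathrm{ad}_{d_{\xi_+}g_t}^2$ acts by the scalar $\langle\beta,\cdot\rangle^2$ on each line $\CC(E_\alpha-\theta E_\alpha)$, split the wedge $\bigwedge_j\tilde\Omega^j_{g_t}$ into the $i\aa$-block (yielding $t^l i^l\det(\mathrm{Hess}_h)_{i\aa}$) and the root blocks (yielding the exponential normalization $e^{2\langle\hat\rho,d_{\xi_+}g_t\rangle}$, the factor $2^{-\#(\Sigma\cap\Phi_+)}$, and the limit one-forms after the Hessian eigenvalue $\alpha(d_{\xi_+}g_t)/\alpha(\xi_+)$ cancels against the $\mathrm{ad}^{-1}$), with $c_0$ absorbing the change to a Chevalley basis. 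The sign and factor-of-$i$ bookkeeping you flag as the remaining obstacle is precisely the part the paper also leaves implicit by citing the analogous computation in Lemma 4.14 of the earlier reference.
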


\begin{proof}
If $\eta\in i\aa$, we have
$$
\mathrm{ad}_\eta^2 (E_\alpha-E_{\alpha^\sigma}) =\langle \beta, \eta\rangle^2 (E_\alpha - E_{\alpha^\sigma}),
$$
where $\beta$ is the restricted root obtained by restricting $\alpha$ to $\aa.$ (Recall from Section \ref{subsection_restrictedroots} that for restricted roots $\alpha^\sigma_{\vert_{\mathfrak a}}= \alpha_{\vert_{\mathfrak a}}$.) We then have, from (\ref{leftformsdown}), the proof of Lemma 4.14 in \cite{baier.hilgert.kaya.mourao.nunes:2023} and from Remarks \ref{gissigmacompatible} and \ref{legendreok}, for regular $\xi_+$,
$$
\lim_{t\to +\infty}t^{-l}\bigwedge_{j=1}^l \tilde \Omega^j_{g_t} (x,\xi_+)= i^l \det \left(\mathrm{Hess}_h(\xi_+) \right)_{i\aa}d\xi^1\wedge \cdots\wedge d\xi^l 
$$ 
and
$$
\lim_{t\to +\infty} e^{-2 \langle\hat\rho, d_{\xi_+}g_t\rangle} 
\bigwedge_{\alpha\in \Sigma\cap \Phi_+}\left(\tilde\Omega^\alpha_{g_t} -\tilde\Omega^{\alpha^\sigma}_{g_t}\right)(x,\xi_+) = 
$$
$$
=2^{-\#(\Sigma\cap \Phi_+)} 
\bigwedge_{\alpha\in \Sigma\cap \Phi_+} 
(\omega^\alpha-\omega^{\alpha^\sigma}-\langle\alpha,\xi_+\rangle^{-1} (d\xi^\alpha-d\xi^{\alpha^\sigma}).
$$
The constant $c_0$ comes from the fact that we are changing from an orthonormal basis in (\ref{leftformsdown}) to a Chevalley basis. 
\end{proof}

Let $\tilde p: U\times \ss^* \to U\times_K \ss^*$ be the canonical projection. 

\begin{proposition}
    The form $\hat \Omega_\infty$ is the pull-back by $\tilde p$ of a 
    trivializing section $\tilde\Omega_\infty$ of the canonical bundle of $\mathcal{P}_\infty$ over $T^*(U/K)_\mathrm{reg}$, $$\hat \Omega_\infty= \tilde p^* \tilde \Omega_\infty.$$
\end{proposition}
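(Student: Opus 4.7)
The plan is to verify that $\hat\Omega_\infty$ is basic with respect to the principal $K$-bundle $\tilde p$, so that it descends to a form $\tilde\Omega_\infty$ on $T^*(U/K)_{\mathrm{reg}}$, and then to identify this descended form as a trivializing section of the canonical bundle of $\mathcal{P}_\infty$. The first task splits into verifying (a) that the $K$-right-invariant extension from $U\times(-i)\aa^*$ to $T^*U_{\mathrm{reg}}$ is well-defined, and (b) that $\hat\Omega_\infty$ is horizontal along the $K$-orbits. The identification with a section of the canonical bundle then follows from the description of $\mathcal{P}_\infty$ recalled in Section \ref{sec-fibering}.

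For (a), at regular $\xi_+\in -i\check\aa^*_+$ the stabilizer in $K$ is the centralizer $M=Z_K(\aa)$, with Lie algebra $\mm$ containing $\tt\cap\kk$ together with the $\Phi_0$-root contributions of $\kk$. I would check directly that each bracketed factor in (\ref{tildeomegainfty}) is $M$-invariant: $T\cap K$ acts on $\omega^\alpha$ and on $d\xi^\alpha$ via the character $\alpha|_{\tt\cap\kk}$, and the permutation structure (\ref{e-perm-sigma-tilda}) together with $\alpha|_\aa = \alpha^\sigma|_\aa$ implies that $\omega^\alpha-\omega^{\alpha^\sigma}$ and $d\xi^\alpha-d\xi^{\alpha^\sigma}$ lie in a $T\cap K$-invariant line. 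The $\Phi_0$-root subgroups act within each restricted-root block of $\ss$ and fix $\aa^*$ pointwise, giving the remaining $M$-invariance. Thus the extension is well-defined and globally $K$-equivariant.

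The computational heart of the argument is (b). By $K$-equivariance, it suffices to verify horizontality at $(e,\xi_+)$ with $\xi_+$ regular. The infinitesimal action of $X\in\kk$ is $v_X = (-X^L|_e,\ad^*_X\xi_+)$. For $X\in\mm$, one has $\ad^*_X\xi_+=0$ (since $\xi_+\in\aa^*$) and the $U$-component is a $\kk$-vector, which is annihilated by every 1-form appearing in (\ref{tildeomegainfty}), all of which involve only $\ss$-duals or $\ss^*$-differentials. For $X$ in the real $\kk$-subspace associated to a root $\alpha\in(\Phi\setminus\Phi_0)^+$, the contraction of $v_X$ with the bracketed factor indexed by $\alpha|_\aa$ produces a nontrivial pairing with $\omega^\alpha-\omega^{\alpha^\sigma}$ (from $-X^L$) and with $d\xi^\alpha-d\xi^{\alpha^\sigma}$ (from $\ad^*_X\xi_+$); the coefficient $-\langle\alpha,\xi_+\rangle^{-1}$ is exactly the proportion needed to make these contributions cancel, using $[H,E_\alpha]=\alpha(H)E_\alpha$ for $H\in\aa$. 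Contractions against other bracketed factors vanish by linear independence of root-labelled differentials. Hence $\hat\Omega_\infty$ is horizontal and descends to a smooth $\tilde\Omega_\infty$.

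To conclude, I would identify $\tilde\Omega_\infty$ as a trivializing section of the canonical bundle of $\mathcal{P}_\infty$. Its total degree $l+|(\Phi\setminus\Phi_0)^+|$ equals $\dim_\CC U_\CC/K_\CC$. Annihilation of the real directions of $\mathcal{P}_\infty$ (tangent to $T_{\mathrm{inv}}$-orbits) is automatic because these are the angular directions conjugate to the action coordinates $\xi^j$, on which none of the 1-forms in (\ref{tildeomegainfty}) evaluates. Annihilation of the anti-holomorphic directions along the coadjoint-orbit fibers $\mathcal{O}_{\xi_+}\cong U_\CC/B$ follows from Proposition \ref{limitcanonicalsection}: the bracketed 1-forms are rescaled limits of the holomorphic forms $\tilde\Omega^j_{g_t}$, so they remain of type $(1,0)$ along each fiber. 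Non-vanishing is clear from the formula on the regular locus, where $\langle\alpha,\xi_+\rangle\neq 0$. The main obstacle is the horizontality check against root-space generators of $\kk$: the cancellation depends sensitively on the coefficient $-\langle\alpha,\xi_+\rangle^{-1}$ and on the permutation structure (\ref{e-perm-sigma-tilda}), and mirrors the rescaling in Proposition \ref{limitcanonicalsection} that produced $\hat\Omega_\infty$ as a limit in the first place.
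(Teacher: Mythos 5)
Your proposal reaches the right conclusion but proves the key descent step by a genuinely different, far more computational route than the paper. The paper never verifies basicness of $\hat\Omega_\infty$ by hand: it uses $p\circ\mathcal{L}_{g_t}=\mathcal{L}_{g_t}\circ\tilde p$, hence $\mathcal{L}_{g_t}^*\circ p^*=\tilde p^*\circ\mathcal{L}_{g_t}^*$, so that each $\Omega_{g_t}|_{U\times\ss^*}$ is already a pull-back under $\tilde p$ from $U_\CC/K_\CC$; since the rescaling factors in (\ref{limitcanform}) are pull-backs of functions and $\tilde p$ is a fixed submersion (the image of $\tilde p^*$ at each point being the closed subspace of forms annihilating $\ker d\tilde p$), the pointwise limit $\hat\Omega_\infty$ is again of the form $\tilde p^*\tilde\Omega_\infty$. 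The identification of $\tilde\Omega_\infty$ as a trivializing section of the canonical bundle of $\mathcal{P}_\infty$ is then read off from the pointwise convergence $\mathcal{P}_{g_t}\to\mathcal{P}_\infty$ and non-vanishing, which is essentially your final step too. What your direct verification of $M$-invariance and horizontality buys is independence from the limiting construction and a concrete check on formula (\ref{tildeomegainfty}); what it costs is exactly the root bookkeeping you flag as the main obstacle, and that is where your sketch is fragile.

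One concrete point of caution: on $\tt\cap\kk\subset\mm$ one has $\alpha^\sigma(X)=\overline{\alpha(\sigma X)}=\overline{\alpha(X)}=-\alpha(X)$, so $\alpha$ and $\alpha^\sigma$ restrict to \emph{opposite} characters of $T\cap K$; the line spanned by $\omega^\alpha-\omega^{\alpha^\sigma}$ is therefore not $T\cap K$-invariant unless the second index is understood as the weight of $\theta E_\alpha\in\uu_{-\alpha^\sigma}$, as the decomposition (\ref{e-rest-root-space-dec}) suggests it should be. So either you must fix that interpretation consistently throughout your contraction computation (where the same sign enters the pairing $[E_\alpha,\theta E_{-\alpha}]$ producing the factor $\langle\alpha,\xi_+\rangle$ that your coefficient $-\langle\alpha,\xi_+\rangle^{-1}$ is meant to cancel), or you should argue invariance only for the full wedge, where the product of the two characters is trivial. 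With that repaired, your argument goes through; the paper's softer argument avoids the issue altogether.
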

    
\begin{proof}
    We have $p\circ \mathcal{L}_{g_t} = \mathcal{L}_{g_t} \circ \tilde p,\, t>0,$ so that $\mathcal{L}_{g_t}^* \circ p^* = \tilde p^* \circ \mathcal{L}_{g_t}^*.$ From Proposition \ref{prop-leftinvcanonical} and since (\ref{limitcanform}) extends by right $K$-invariance we have that 
    the left-hand side of (\ref{limitcanform}) can be written as the limit as $t\to \infty$ of a pull-back by $\tilde p \circ \mathcal{L}_{g_t}$ from $U_\CC/K_\CC$. But since $\tilde p$ is a submersion this implies that the right-hand side of (\ref{limitcanform}) is the pull-back under $\tilde p$ of a well defined form on $T^*(U/K)_\mathrm{reg}$ whence
    $\hat \Omega_\infty = \tilde p^*\tilde\Omega_\infty$.

    Moreover, since  $\lim_{t\to \infty} \mathcal{P}_{g_t}=\mathcal{P}_\infty,$ pointwise in the Lagrangian Grassmannian over $T^*(U/K)_\mathrm{reg}$ we obtain that $\tilde \Omega_\infty$, which is never vanishing, gives a trivializing section of the corresponding canonical bundle.
\end{proof}

We also have,

\begin{remark}
    Note that, from Theorem 4.1 in Chapter 5 in \cite{helgason:1984}, as recalled in  Section \ref{subsection_sphericalreps}, we obtain that $\hat\rho$ is a spherical weight. Moreover, one can check that the representation $V_{\hat\rho}$ is self-dual so that under the 
    identification $V_{\hat\rho}\cong V_{\hat\rho}^*$ via an invariant inner product, since the subspace of spherical vectors is one-dimensional,  we can take $(v_{\hat\rho}^K)^*=v_{\hat\rho}^K$.
\end{remark}

\begin{proposition}\label{polhalfform}
 On the subset of $\check W$ where $F_{\hat \rho, v_{\hat\rho}^K\otimes v_{\hat\rho}^K}$ is nonzero, 
 $$
 \Omega_\infty:= F_{\hat \rho, v_{\hat\rho}^K\otimes v_{\hat\rho}^K}^{-2}\tilde\Omega_\infty
 $$
 is a polarized section of $\mathcal{P}_\infty.$
\end{proposition}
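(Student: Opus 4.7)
The plan is to realise $\Omega_\infty$ as a scaling limit of polarized sections along the Mabuchi ray $\mathcal{P}_{g_t}$, $t>0$, of Section~\ref{subs-convpol}. For each $t>0$, consider the section of the canonical bundle $K_{g_t}$ given by
$$
\Omega^{(t)} := \left(f^{g_t}_{\hat\rho,\, v^K_{\hat\rho}\otimes v^K_{\hat\rho}}\right)^{-2}\Omega_{g_t},
$$
defined on the open subset where the denominator is non-zero. Since $\Omega_{g_t}$ is a $\hat I_{g_t}$-holomorphic trivialising section of $K_{g_t}$ by Proposition~\ref{prop-leftinvcanonical}, and $f^{g_t}_{\hat\rho,\, v^K_{\hat\rho}\otimes v^K_{\hat\rho}}$ is an $\hat I_{g_t}$-holomorphic function by (\ref{fs0}), the product $\Omega^{(t)}$ is $\hat I_{g_t}$-holomorphic, hence a $\mathcal{P}_{g_t}$-polarized section of the canonical bundle of $\mathcal{P}_{g_t}$.

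Second, I would combine Lemma~\ref{lemma-convpol}(i) applied at $\lambda=\hat\rho$ with Proposition~\ref{limitcanonicalsection}: the exponential growth of $f^{g_t}_{\hat\rho,\,v^K_{\hat\rho}\otimes v^K_{\hat\rho}}$, driven by the highest-weight component of the spherical vector, and the exponential growth $e^{2\langle\hat\rho,\,d_{\xi_+}g_t\rangle}$ of $\Omega_{g_t}$ along the non-compact restricted-root directions in (\ref{limitcanform}) are both governed by the same weight $\hat\rho$ and cancel precisely in the product. This yields, pointwise on the open subset of $\check W$ where $F_{\hat\rho,\,v^K_{\hat\rho}\otimes v^K_{\hat\rho}}$ does not vanish,
$$
\lim_{t\to +\infty}\, t^{-l}\,\Omega^{(t)} \;=\; c\, F_{\hat\rho,\,v^K_{\hat\rho}\otimes v^K_{\hat\rho}}^{-2}\,\tilde\Omega_\infty \;=\; c\,\Omega_\infty,
$$
where $c\neq 0$ absorbs the $t$-independent prefactors $c_0\, i^l\, 2^{-\#(\Sigma\cap\Phi_+)}\det(\Hess_h)_{i\aa}$ from Proposition~\ref{limitcanonicalsection}.

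Finally, the polarization condition must be transferred to the limit. Since $\mathcal{P}_{g_t}\to \mathcal{P}_\infty$ pointwise in the Lagrangian Grassmannian by the theorem of Section~\ref{subs-convpol}, and the convergence above is smooth in local coordinates (every ingredient depends smoothly on $(x,\xi_+)$, with the $t$-dependence entering through elementary analytic expressions in $t$ and $e^{-Ct}$), the annihilation conditions satisfied by $\Omega^{(t)}$ for local frames of $\overline{\mathcal{P}_{g_t}}$ pass to the limit and give the corresponding annihilation of $\Omega_\infty$ by local frames of $\overline{\mathcal{P}_\infty}$. Hence $\Omega_\infty$ is $\mathcal{P}_\infty$-polarized on the stated subset of $\check W$. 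The main obstacle is the rigour of this $C^1$-passage-to-limit step; a way to circumvent it is to verify directly from the explicit formula (\ref{tildeomegainfty}) that $\Omega_\infty$ restricts to a holomorphic top form on each coadjoint-orbit fibre $\mathcal{O}_{\xi_+}$ of $\mu_\mathrm{inv}$ endowed with its Borel--Weil complex structure (the complex directions of $\mathcal{P}_\infty$ identified in Proposition~\ref{prop-holomorphicdirections}) and that its Lie derivative along the Hamiltonian vector fields of the components of $\mu_\mathrm{inv}$ vanishes (the real directions of $\mathcal{P}_\infty$), these two conditions together being the definition of a $\mathcal{P}_\infty$-polarized section of the canonical bundle.
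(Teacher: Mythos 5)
Your proposal is correct and takes essentially the same route as the paper, whose proof likewise deduces the claim from Proposition \ref{limitcanonicalsection} and Lemma \ref{lemma-convpol} by observing that $\Omega_{g_t}$ and $f^{g_t}_{\hat\rho,\, v^K_{\hat\rho}\otimes v^K_{\hat\rho}}$ are $\mathcal{P}_{g_t}$-polarized for all $t>0$ and passing to the limit; your write-up is merely more explicit about the rescaling, the cancellation of the $e^{\pm 2\langle\hat\rho, d_{\xi_+}g_t\rangle}$ factors, and the passage-to-the-limit step. The only small slip is that the prefactor $\det\left(\Hess_h(\xi_+)\right)_{i\aa}$ from Proposition \ref{limitcanonicalsection} is a nonvanishing function of $\xi_+$ rather than a constant, but since it depends only on $\mu_{\mathrm{inv}}$ it is itself $\mathcal{P}_\infty$-polarized and therefore harmless for the conclusion.
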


\begin{proof}
This follows from Proposition \ref{limitcanonicalsection}
and from Lemma \ref{lemma-convpol}, since both $\Omega_{g_t}$ and $f^{g_t}_{\hat \rho, v_{\hat\rho}^K\otimes v_{\hat\rho}^K}$ are $\mathcal{P}_{g_t}$-polarized for all $t>0$.
\end{proof}

Since the canonical bundle of $\mathcal{P}_\infty$ over $T^*(U/K)_\mathrm{reg}$ is trivializable, with trivialing section $\tilde \Omega_\infty$, we can define a bundle of half-forms by taking its square root with trivializing section $\tilde \Omega_\infty^\frac12$.

{}

{}

{}
\section{Quantizations of $T^*(U/K)$ along Mabuchi rays}
\label{sect-extendedquantumbundle}

In this section we will describe the quantizations of $T^*(U/K)$ for the vertical (or Schr\"odinger) polarization and for the $U$-invariant K\"ahler polarizations determined by the complex structures 
$\hat I_g, g\in \Conv^\infty_{\rm unif}(\uu^\ast)^{\Ad^\ast_U}.$ We will then relate them to the space of polarized sections for the limit polarization $\mathcal{P}_\infty$ which, as we will describe, inherits a natural Hilbert space structure, $\mathcal{H}_\infty,$ 
so that we obtain an extended bundle of quantum Hilbert spaces 
$$
\mathcal{H} \to \left\{0\right\}\cup \Conv^\infty_{\rm unif}(\uu^\ast)^{\Ad^\ast_U} \cup \left\{\infty \right\},
$$
equipped with a natural $U$-invariant flat connection.

\subsection{The Schr\"odinger and the K\"ahler quantizations for $\mathcal{P}_{g_t}, t>0$}
\label{subsec_quantizations}

The quantization of $T^*(U/K)$ in the  Schr\"odinger, or vertical, polarization $\mathcal{P}_{\mathrm{Sch}}$ is given by 
\begin{equation}
\mathcal{H}_{\mathrm{Sch}} := L^2(U/K)\otimes \sqrt{dx}
\end{equation}
where $dx$ denotes the pull-back to $T^*(U/K)$, by the canonical projection, of a unit-volume $U$-invariant volume form on $U/K$. We have then, 
$$
\mathcal{H}_{\mathrm{Sch}} =  \widehat \bigoplus_{\lambda\in \hat U_K} {\left\{  \sigma^0_{\lambda,v^*}\mid \lambda\in \hat U_K, v^*\in V_\lambda^*\right\}},
$$
where
$$
\sigma^0_{\lambda,v^*} (x):= \tr (\pi_\lambda(x) v_\lambda^K\otimes v^*)\otimes \sqrt{dx},
$$
and where the hat denotes norm completion with respect to the (usual) $U$-invariant $L^2$ inner 
product, 
$$
\langle \sigma_{\lambda,v^*}^0, \sigma_{\tilde \lambda, \tilde v^*}^0\rangle =
d_\lambda^{-1}\delta_{\lambda\tilde\lambda} \langle v^*, \tilde v^*\rangle_{V_\lambda^*}.
$$

{}

From Theorem~\ref{th-4}, the K\"ahler structure of $T^*(U/K)$ for the complex structure $\hat I_g$, for $g\in \Conv^\infty_{\rm unif}(\uu^\ast)^{\Ad^\ast_U}$, is obtained from a K\"ahler quotient of $(T^*U,\hat I_g)$.  From 
equations (\ref{fs0}), (\ref{fs}) and Section \ref{subsec-half} we also obtain that the half-form corrected quantization of $T^*(U/K)$ in the holomorphic quantization associated to the complex structure $\hat I_g$,  is given by the Hilbert space
\begin{equation}
    \mathcal{H}_{g} := \widehat \bigoplus_{\lambda\in \hat U_K}{\left\{  \sigma^g_{\lambda,v^*}\mid\lambda\in \hat U_K, v^*\in V_\lambda^*\right\}},
\end{equation}
where
\begin{equation}\label{sinatra}
    \sigma^g_{\lambda,v^*} := e^{-g(\lambda+\hat\rho)} f^g_{\lambda, v_\lambda^K\otimes v^*}e^{-\frac12\kappa_g}\otimes \Omega_g^{\frac12},
\end{equation}
and where the inner product in $\mathcal{H}_g$ is the usual inner product for half-form corrected K\"ahler quantization (see, for instance, Section 4 of \cite{kirwin.mourao.nunes:2013}).
Of course, this is an instance of the celebrated results of Guillemin and Sternberg \cite{guillemin.sternberg:1982}. (While $T^*(U/K)\cong U_\CC / K_\CC$ is not compact the quotient can be described in terms of an action of $K_\CC$.) 

As will be discussed below in Section \ref{subsect-extendedquantumbundle}, the first exponential factor in (\ref{sinatra}) is justified by the fact that 
\begin{equation}\label{frame}
\left\{\sigma^g_{\lambda,v_j^*}\mid \lambda\in \hat U_K, v_j^*\in V_\lambda^*, j=1,\dots ,\dim V_\lambda \right\}, 
\end{equation}
with $\left\{v_j\right\}_{j=1, \dots, \dim V_\lambda}$ a basis of $V_\lambda$,
gives a parallel frame for a naturally defined flat connection on the extended quantum bundle.

{}

\subsection{The flat connection on the quantum bundle}
\label{subsect-extendedquantumbundle}

In the previous subsection, for the family of $U$-invariant K\"ahler quantizations of $T^*(U/K)$, we obtained a quantum bundle of Hilbert spaces of polarized sections, 
with fiber $\mathcal{H}_g$ over $g\in\Conv^\infty_{\rm unif}(\uu^\ast)^{\Ad^\ast_U}.$ In the spirit of \cite{axelrod-dellapietra-witten91} (see also \cite{florentino.matias.mourao.nunes:2005, kirwin.mourao.nunes:2014}), this bundle comes equipped with a flat connection, $\nabla^Q$,
which relates the different $U$-invariant K\"ahler quantizations. This connection is obtained by covariantly differentiating sections of the quantum bundle along a tangent vector $h$ to $\Conv^\infty_{\rm unif}(\uu^\ast)^{\Ad^\ast_U}$ by combining the prequantum operator $\hat h$ and a fiber-preserving ``quantum" operator $\mathcal{Q}(h)$. In the case of the quantization of symplectic vector spaces along translation invariant K\"ahler polarizations, this combination defines a flat connection whose (unitary) parallel transport intertwines natural representations of the Heisenberg group on the different fibers.

For $h\in \Conv^\infty_{\rm unif}(\uu^\ast)^{\Ad^\ast_U}$, consider the half-form corrected Kostant-Souriau prequantum operator (let $\nabla$ denote the connection on the prequantum bundle obtained from the prequantum data on $T^*U$ by K\"ahler reduction, \`a la Guillemin-Sternberg)
$$
\hat h := (i\nabla_{X_h} +h) \otimes 1 + 1\otimes iL_{X_h}.
$$
Consider also the quantum operator $\mathcal{Q}(h)$ defined by 
$$
\mathcal{Q}(h) \sigma_{\lambda,v^*}^g := h(\lambda+\hat \rho) \sigma_{\lambda,v^*}^g,
$$
for any $g\in \Conv^\infty_{\rm unif}(\uu^\ast)^{\Ad^\ast_U},\, \lambda \in \hat U_K, v^*\in V_\lambda^*.$ As in Section 2.4 of \cite{florentino.matias.mourao.nunes:2005} and Section 5.3 of \cite{baier.hilgert.kaya.mourao.nunes:2023}, consider sections of the quantum bundle of the form
$$
s(g,[x,\xi]) = f(g, xe^{id_\xi g}) e^{-\frac12\kappa_g}\otimes \Omega_g^\frac12,
$$
such that, for fixed $g$, $f$ is holomorphic in $U_\CC/K_\CC$.  The connection $\nabla^Q$ is then defined by
$$
\nabla_h^Q s := \frac{\delta}{\delta h} s + \hat h s - \mathcal{Q}(h) s
$$
where  $h$ is a tangent vector to $\Conv^\infty_{\rm unif}(\uu^\ast)^{\Ad^\ast_U}$. (This is an analog of equation (1.30) in \cite{axelrod-dellapietra-witten91}.) The evaluation of the connection form, in this frame, along the tangent vector $h$ is therefore given by $(\hat h - \mathcal{Q}(h))$. Note that the quantum operator $\mathcal{Q}$ acts through the isotypical decomposition of $s$ under $U$-representations, corresponding to the decomposition of $s$ in the frame. 

For $T^*U$, as described in Section 5.3 of \cite{baier.hilgert.kaya.mourao.nunes:2023}, the equation of covariant constancy determined by $\nabla^Q$ (along quadratic $h$) corresponds to the heat equation described in \cite{florentino.matias.mourao.nunes:2005}. (The first order differential operator in the connection form, given by the prequantum operator $\hat h$, does not show up explicitly in the heat equation in  \cite{florentino.matias.mourao.nunes:2005} because there the equation was written in $U_\CC$, not in $T^*U$, so the that term corresponding to the Legendre transform $\mathcal{L}_{g+th}$ is implicit there.) Also for quadratic $h$, the condition of covariant constancy is an analog of the heat equation satisfied by theta functions for varying moduli in the quantization of an abelian variety.

{}

Note that the sections $\sigma^{g+th}_{\lambda, v^*}$, for $\lambda\in \hat U_K, v^*\in V_\lambda^*$, which are linear combinations with constant coefficients of the frame sections in (\ref{frame}), 
satisfy the equation of parallel transport:
$$
\frac{d}{dt} \sigma^{g+th}_{\lambda, v^*} = 
\nabla_h^Q \sigma^{g+th}_{\lambda, v^*} = \hat h \sigma^{g+th}_{\lambda, v^*} - h(\lambda+\hat\rho) 
\sigma^{g+th}_{\lambda, v^*},
$$
which follows explicitly from (\ref{sinatra}) and 
from (see Proposition 3.19 in \cite{kirwin.mourao.nunes:2013})
$$
{}
e^{t\hat h}  \left(f^g_{\lambda, v_\lambda^K\otimes v^*}e^{-\frac12\kappa_g}\otimes \Omega_g^{\frac12}\right)
= f^{g+th}_{\lambda, v_\lambda^K\otimes v^*}e^{-\frac12\kappa_{g+th}}\otimes \Omega_{g+th}^{\frac12}.
$$
In particular, this shows that $\nabla^Q$ is flat.

Since the operators $\hat h$ and $\mathcal{Q}(h)$ commute, the parallel transport of $\nabla^Q$ along the Mabuchi ray generated by $h$ is given by exponentiating the connection form, which gives a generalized coherent state transform (gCST) defined by 
$$
C_{t,h} := e^{t\hat h} \circ e^{-t\mathcal{Q}(h)}.
$$
In fact, one can take this parallel transport operator to act also on the Hilbert space of the vertical polarization $\mathcal{H}_\mathrm{Sch}$. The following is an immediate corollary of \cite{kirwin.mourao.nunes:2013} or Section 5.2 of \cite{baier.hilgert.kaya.mourao.nunes:2023}.

\begin{proposition}
    Let $g=0$ or $g\in \Conv^\infty_{\rm unif}(\uu^\ast)^{\Ad^\ast_U}$. For $t\geq 0$, 
    $$
    C_{t,h} \sigma^g_{\lambda,v^*} = \sigma^{g+th}_{\lambda,v^*}.
    $$
     Therefore, $C_{t,h}$ is a $U$-equivariant isomorphism $$C_{t,h}:\mathcal{H}_g\to \mathcal{H}_{g+th},\, t\geq 0.$$ (Here, $\mathcal{H}_0:= \mathcal{H}_{\mathrm{Sch}}.$)
    \end{proposition}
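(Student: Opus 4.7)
The proposition is a direct computation building on two ingredients already in hand: the explicit form of $\sigma^g_{\lambda,v^*}$ given in (\ref{sinatra}), and the transport identity
\[
  e^{t\hat h}\bigl(f^g_{\lambda,v_\lambda^K\otimes v^*}\,e^{-\frac12\kappa_g}\otimes\Omega_g^{\frac12}\bigr)
  = f^{g+th}_{\lambda,v_\lambda^K\otimes v^*}\,e^{-\frac12\kappa_{g+th}}\otimes\Omega_{g+th}^{\frac12},
\]
cited from Proposition 3.19 of \cite{kirwin.mourao.nunes:2013} (and reproduced on $T^*(U/K)$ by the same argument, passing through the Legendre transform on the Kempf--Ness set, as already used in Section \ref{subsec-half}). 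The first thing I would do is verify that the two operators $\hat h$ and $\mathcal{Q}(h)$ appearing in $C_{t,h}=e^{t\hat h}\circ e^{-t\mathcal Q(h)}$ commute, which is immediate because $\mathcal{Q}(h)$ acts as the scalar $h(\lambda+\hat\rho)$ on each isotypical component, while $e^{t\hat h}$ preserves that isotypical decomposition (this follows from the $\Ad^*_U$-invariance of $h$, so that $\hat h$ commutes with the $U$-action).

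With commutativity in place, the computation runs as follows. Start with $\sigma^g_{\lambda,v^*}$ and first apply $e^{-t\mathcal Q(h)}$, which multiplies by the scalar $e^{-th(\lambda+\hat\rho)}$. Then apply $e^{t\hat h}$, using the transport identity above to replace $f^g\, e^{-\frac12\kappa_g}\otimes\Omega_g^{\frac12}$ with $f^{g+th}\,e^{-\frac12\kappa_{g+th}}\otimes\Omega_{g+th}^{\frac12}$. Combining the two scalar exponentials gives
\[
  C_{t,h}\,\sigma^g_{\lambda,v^*}
  = e^{-(g+th)(\lambda+\hat\rho)}\,f^{g+th}_{\lambda,v_\lambda^K\otimes v^*}\,e^{-\frac12\kappa_{g+th}}\otimes\Omega_{g+th}^{\frac12}
  = \sigma^{g+th}_{\lambda,v^*},
\]
exactly as claimed. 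This treatment covers the $g=0$ case uniformly, with the convention $\kappa_0=0$ and $f^0_{\lambda,v_\lambda^K\otimes v^*}=\tr(\pi_\lambda(\cdot)\,v_\lambda^K\otimes v^*)$, and with $\Omega_0^{\frac12}$ identified with $\sqrt{dx}$ (this is precisely where the first exponential factor in (\ref{sinatra}) is designed to make the $g=0$ endpoint match $\sigma^0_{\lambda,v^*}$, as advertised in the discussion preceding the proposition).

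For the final assertions, $U$-equivariance of $C_{t,h}$ follows from that of both $e^{t\hat h}$ (since $h$ is $\Ad^*_U$-invariant, $\hat h$ intertwines the $U$-action) and $\mathcal{Q}(h)$ (which is diagonal on isotypical components), while the isomorphism property is immediate from the bijection of Hilbert bases $\{\sigma^g_{\lambda,v^*}\}\leftrightarrow\{\sigma^{g+th}_{\lambda,v^*}\}$ given by the formula. There is no real obstacle here: the content has been front-loaded into the existence of the transport identity and into the definition of $\mathcal Q(h)$, which was precisely chosen to absorb the representation-dependent prefactor $e^{-g(\lambda+\hat\rho)}$ in (\ref{sinatra}). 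The only point requiring care is the convention at $g=0$, where one should verify that the limiting form of $\Omega_g^{\frac12}$ is compatible with $\sqrt{dx}$ on the nose, so that no stray constants appear in the $t\to 0^+$ limit of the transport formula.
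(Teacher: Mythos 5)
Your proposal is correct and follows essentially the same route as the paper: the paper states the result as an immediate corollary of the transport identity $e^{t\hat h}\bigl(f^g_{\lambda,v_\lambda^K\otimes v^*}e^{-\frac12\kappa_g}\otimes\Omega_g^{1/2}\bigr)=f^{g+th}_{\lambda,v_\lambda^K\otimes v^*}e^{-\frac12\kappa_{g+th}}\otimes\Omega_{g+th}^{1/2}$ (Proposition 3.19 of \cite{kirwin.mourao.nunes:2013}) combined with the scalar action of $e^{-t\mathcal{Q}(h)}$ and the prefactor $e^{-g(\lambda+\hat\rho)}$ in the definition of $\sigma^g_{\lambda,v^*}$, which is exactly the computation you carry out. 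Your explicit verification of the commutativity of $\hat h$ and $\mathcal{Q}(h)$ and of the $g=0$ endpoint conventions only makes explicit what the paper leaves implicit.
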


\begin{corollary}
    For $g=0$ or $g\in \Conv^\infty_{\rm unif}(\uu^\ast)^{\Ad^\ast_U}$, letting
\begin{equation}
    \mathcal{H}_{g}^\lambda := {\left\{  \sigma^g_{\lambda,v^*}\mid v^*\in V_\lambda^*\right\}}, \,\, \lambda\in \hat U_K,
\end{equation}
we obtain from $U$-equivariance that there exist constants $a_{t,h,\lambda}\in \RR\setminus \left\{0\right\}$, for $t>0, \lambda\in \hat U_K$, such that
$$
a_{t,h,\lambda}C_{t,h} :  \mathcal{H}_{g}^\lambda \to  \mathcal{H}_{g+th}^\lambda
$$
are unitary isomorphisms.
\end{corollary}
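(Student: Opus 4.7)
The plan is a direct application of Schur's lemma. First, by the preceding Proposition, $C_{t,h}$ sends $\sigma^g_{\lambda,v^*}$ to $\sigma^{g+th}_{\lambda,v^*}$, and therefore restricts to a $\CC$-linear isomorphism $C_{t,h}\colon\mathcal{H}_g^\lambda \to \mathcal{H}_{g+th}^\lambda$. The assignment $v^* \mapsto \sigma^g_{\lambda,v^*}$ is $\CC$-linear and, by the $K$-sphericity of $v_\lambda^K$ combined with irreducibility of $V_\lambda$, injective; hence it identifies $\mathcal{H}_g^\lambda$ with $V_\lambda^*$ as a vector space. A direct computation from (\ref{fs0}) and the $U$-invariance of the K\"ahler potential, the measure, and the trivializing half-form $\Omega_g^{1/2}$ shows that, under this identification, the induced $U$-action on $\mathcal{H}_g^\lambda$ corresponds to the dual representation $\pi_\lambda^*$ on $V_\lambda^*$. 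Since the same identification is used on $\mathcal{H}_{g+th}^\lambda$, the restricted map $C_{t,h}|_{\mathcal{H}_g^\lambda}$ becomes the identity on $V_\lambda^*$ and is in particular $U$-equivariant.

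Next, both inner products on $\mathcal{H}_g$ and $\mathcal{H}_{g+th}$ are $U$-invariant (by $U$-invariance of the K\"ahler data, the measure, and the half-form bundle), so their restrictions to the isotypical pieces $\mathcal{H}_g^\lambda$ and $\mathcal{H}_{g+th}^\lambda$ are positive-definite $U$-invariant Hermitian forms on $V_\lambda^*$. Because $V_\lambda^*$ is irreducible, Schur's lemma guarantees that any two such forms differ by a positive real scalar. Pulling back the inner product on $\mathcal{H}_{g+th}^\lambda$ along $C_{t,h}$ and comparing with the inner product on $\mathcal{H}_g^\lambda$ therefore yields a constant $c_{t,h,\lambda} > 0$ such that
\[
 \langle C_{t,h}\sigma,\, C_{t,h}\sigma'\rangle_{g+th} \;=\; c_{t,h,\lambda}\,\langle\sigma,\sigma'\rangle_g
 \qquad \text{for all } \sigma,\sigma'\in\mathcal{H}_g^\lambda.
\]

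Setting $a_{t,h,\lambda} := c_{t,h,\lambda}^{-1/2} \in \RR\setminus\{0\}$ then makes $a_{t,h,\lambda}\,C_{t,h}$ an isometry $\mathcal{H}_g^\lambda\to\mathcal{H}_{g+th}^\lambda$, and thus (being a linear isomorphism between finite-dimensional Hilbert spaces of the same dimension) a unitary isomorphism. No deep technical obstacle is anticipated: the whole argument reduces to (i) identifying each $\mathcal{H}_g^\lambda$ with a single irreducible copy of $V_\lambda^*$, which is guaranteed by the one-dimensionality of the $K$-spherical subspace recalled in Section \ref{subsection_sphericalreps}, and (ii) one invocation of Schur's lemma to compare two $U$-invariant inner products on that irreducible representation.
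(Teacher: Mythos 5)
Your proof is correct and is essentially the argument the paper intends: the corollary is stated as following ``from $U$-equivariance'' with no written proof, and your filling-in --- identifying $\mathcal{H}_g^\lambda$ with the irreducible $V_\lambda^*$, noting both inner products are $U$-invariant, and invoking Schur's lemma to get the positive scalar --- is the standard argument behind that assertion.
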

{}

As we describe next, parallel transport along Mabuchi rays of the frame ${\left\{  \sigma^g_{\lambda,v^*}\mid v^*\in V_\lambda^*,\, \lambda\in \hat U_K\right\}}$ has a nice behavior at infinite geodesic time.

\subsection{Convergence of quantizations as $t\to +\infty$}

In this section, in the spirit of \cite{baier.hilgert.kaya.mourao.nunes:2023}, we want to study the limit of the family of K\"ahler quantizations $\mathcal{H}_{g+th}$ as $t\to +\infty.$

\begin{definition} The distributions $\delta_{\mu_{\mathrm{inv}}^{-1}(\lambda +\hat\rho)}$, $\lambda\in \hat U_K$, are defined by the unit mass measure supported on $\mu_{\mathrm{inv}}^{-1}(\lambda +\hat\rho)$ such that, for $f\in C_c(T^*(U/K)),$
\begin{equation}\label{deltas}
    \int_{T^*(U/K)} f \delta_{\mu_{\mathrm{inv}}^{-1}(\lambda +\hat\rho)} = \int_{U/K} f([x, \lambda+\hat\rho]) dx,
\end{equation}
where $dx$ is the normalized $U$-invariant measure on $U/K$.
\end{definition}

Let, for $\lambda\in \hat U_K , v^*\in V_\lambda^*,$
\begin{equation}\label{007}
\sigma^\infty_{\lambda, v^*} := c_0 (2\pi)^{l/2}i^l 
2^{-\# (\Sigma \cap \Phi_+)} P(\lambda+\hat\rho)^2 F_{\lambda, v_\lambda^K\otimes v^*} F_{\hat \rho, v^K_{\hat\rho}\otimes v_{\hat\rho}^K}\delta_{\mu_{\mathrm{inv}}^{-1}(\lambda +\hat\rho)} \otimes F^{-1}_{\hat \rho, v^K_{\hat\rho}\otimes v_{\hat\rho}^K}\tilde \Omega_\infty^{1/2},
\end{equation}
where
$P(\lambda+\hat \rho) := \Pi_{\alpha\in \Sigma\cap \Phi_+} \langle \alpha,\lambda+\hat\rho\rangle.$

\begin{theorem}\label{quantizationsconverge}
Let $g=0$ or $g\in \Conv^\infty_{\rm unif}(\uu^\ast)^{\Ad^\ast_U}$ and $h\in \Conv^\infty_{\rm unif}(\uu^\ast)^{\Ad^\ast_U}$. In the distributional sense, 
    $$
    \lim_{t\to +\infty} C_{t,h} \sigma^{g}_{\lambda,v^*} = \sigma^\infty_{\lambda, v^*},\,\, \lambda\in \hat U_K, v^*\in V_\lambda^*.
    $$
\end{theorem}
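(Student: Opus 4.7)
The plan is to exploit the identity $C_{t,h}\sigma^{g}_{\lambda,v^*}=\sigma^{g+th}_{\lambda,v^*}$ from the preceding Proposition so that the theorem reduces to computing the distributional limit of $\sigma^{g_t}_{\lambda,v^*}$ as $t\to+\infty$, where $g_t:=g+th$. Pairing $\sigma^{g_t}_{\lambda,v^*}$ with a smooth compactly supported test half-form on $T^*(U/K)_{\mathrm{reg}}$ and integrating in the polar-type decomposition of $\ss^*_{\mathrm{reg}}$ provided by the $K$-action on $\ss^*$ (identifying $\ss^*_{\mathrm{reg}}$ with the $K$-saturation of $-i\check\aa_+^*$), one is reduced to a Laplace asymptotic along the $\xi_+$-direction.

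Substituting the pointwise asymptotics of Lemma \ref{lemma-convpol}(i) for $f^{g_t}_{\lambda,v_\lambda^K\otimes v^*}$ and of Proposition \ref{limitcanonicalsection} for $\Omega_{g_t}^{1/2}$ into the definition of $\sigma^{g_t}_{\lambda,v^*}$, the amplitude takes the schematic form
$$t^{l/2}\,e^{\Psi_t(\xi_+)}\,F_{\lambda,v_\lambda^K\otimes v^*}(x,\xi_+)\,\bigl[c_0\,i^l\,2^{-\#(\Sigma\cap\Phi_+)}\det(\Hess_h(\xi_+))_{i\aa}\bigr]^{1/2}\tilde\Omega_\infty^{1/2}$$
plus subdominant Fourier-harmonic contributions for weights $\nu\neq\lambda$, with
$$\Psi_t(\xi_+)=-g_t(\lambda+\hat\rho)+i\langle\lambda,d_{\xi_+}g_t\rangle+\langle\hat\rho,d_{\xi_+}g_t\rangle-\tfrac{1}{2}\kappa_{g_t}(\xi_+).$$
Under the standard identification of the weight space $i\tt^*$ with its real counterpart $\tt^*$ by multiplication by $-i$, both $i\langle\lambda,d_{\xi_+}g_t\rangle$ and $\langle\hat\rho,d_{\xi_+}g_t\rangle$ become real, and the Fenchel inequality $g_t(\eta)+\kappa_{g_t}(\xi_+)\geq\langle\eta,d_{\xi_+}g_t\rangle$ (with equality iff $\eta=\xi_+$) associated to the uniformly convex $g_t$ identifies the unique non-degenerate maximum of $\Psi_t$ on $-i\check\aa_+^*$ with $\xi_+^{\ast}=\lambda+\hat\rho$ and Hessian asymptotically proportional to $-t\,\Hess_h(\xi_+^{\ast})_{i\aa^*}$.

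A Laplace expansion along the $\xi_+$ direction then produces a Gaussian factor of order $(2\pi/t)^{l/2}\det(\Hess_h(\xi_+^{\ast}))^{-1/2}_{i\aa^*}$, which cancels both the $t^{l/2}$ and the determinant coming from Proposition \ref{limitcanonicalsection} and leaves a clean factor $(2\pi)^{l/2}$. The transverse $U$-integration on the level set $\mu_{\mathrm{inv}}^{-1}(\lambda+\hat\rho)$, combined with the measure of (\ref{deltas}), reconstructs the Dirac distribution. The Weyl-integration-type Jacobian $\prod_{\alpha\in\Sigma\cap\Phi_+}\langle\alpha,\xi_+\rangle^2$ of the polar decomposition $\ss^*_{\mathrm{reg}}=K\cdot(-i\check\aa_+^*)$, evaluated at $\xi_+^{\ast}=\lambda+\hat\rho$, yields exactly the factor $P(\lambda+\hat\rho)^2$ in (\ref{007}). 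Finally, a second application of Lemma \ref{lemma-convpol}(i), now to the representation $V_{\hat\rho}$ with its spherical vector $v_{\hat\rho}^K$, converts the surviving factor $e^{\langle\hat\rho,d_{\xi_+}g_t\rangle}$ from the half-form asymptotic into $F_{\hat\rho,v_{\hat\rho}^K\otimes v_{\hat\rho}^K}$; via Proposition \ref{polhalfform} this rewriting rewrites $\tilde\Omega_\infty^{1/2}$ in the polarized frame as $F_{\hat\rho,v_{\hat\rho}^K\otimes v_{\hat\rho}^K}^{-1}\tilde\Omega_\infty^{1/2}$ up to multiplication by $F_{\hat\rho,\cdot}$, and reproduces exactly the prefactor $c_0(2\pi)^{l/2}i^l 2^{-\#(\Sigma\cap\Phi_+)}P(\lambda+\hat\rho)^2$ and the half-form frame appearing in (\ref{007}).

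The main obstacle is the Laplace step itself: one must carefully verify that, under the conventional identifications of weight and momentum spaces, $\Psi_t$ genuinely has a clean non-degenerate maximum at $\xi_+^{\ast}=\lambda+\hat\rho$; control the subdominant Fourier harmonics $\widehat{(f^{g_t})}_{\nu}$ for $\nu\neq\lambda$ uniformly on the integration domain; and match the Hessian determinants so that the polynomial-in-$t$ prefactors cancel cleanly against Proposition \ref{limitcanonicalsection}. The matching of the polar-decomposition Jacobian with the Weyl-type product $P(\lambda+\hat\rho)^2$ is equally delicate, requiring a $K$-equivariant version of the Weyl integration formula for the slice $-i\check\aa_+^*\subset\ss^*$.
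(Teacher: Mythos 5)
Your skeleton is the right one, and it is in fact all the paper itself supplies (its proof of Theorem \ref{quantizationsconverge} is a one-line reference to the analogous computation for $T^*U$): reduce to $\sigma^{g+th}_{\lambda,v^*}$ via the parallel-transport identity, pair with test objects, and localize by a Laplace argument onto $\mu_{\mathrm{inv}}^{-1}(\lambda+\hat\rho)$ using Lemma \ref{lemma-convpol} and Proposition \ref{limitcanonicalsection}. However, two of the steps you rely on do not close as written.

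The Fenchel step is the more serious gap. The inequality you invoke, $g_t(\eta)+\kappa_{g_t}(\xi_+)\ge\langle\eta,d_{\xi_+}g_t\rangle$ with equality iff $\xi_+=\eta$, bounds the combination $-g_t(\eta)+\langle\eta,d_{\xi_+}g_t\rangle-\kappa_{g_t}(\xi_+)$, but your phase $\Psi_t$ carries only $-\tfrac12\kappa_{g_t}$ while the linear term $\langle\lambda+\hat\rho,d_{\xi_+}g_t\rangle$ enters with coefficient $1$. The critical point of that phase sits where $\Hess_{g_t}(\xi_+)\bigl(\lambda+\hat\rho-\tfrac12\xi_+\bigr)=0$, i.e.\ at $\xi_+=2(\lambda+\hat\rho)$, and its value there is strictly positive (equal to $\tfrac t2\Vert\lambda+\hat\rho\Vert^2$ for $g=0$ and $h=\tfrac12\Vert\xi\Vert^2$), so as written the integrand neither peaks at $\lambda+\hat\rho$ nor stays bounded; you must either track the factors of $i$ so that part of $i\langle\lambda,d_{\xi_+}g_t\rangle$ stays oscillatory, or exhibit the second factor of $e^{-\frac12\kappa_{g_t}}$ that the pairing actually supplies, before the localization at $\lambda+\hat\rho$ with vanishing maximum is legitimate. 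The second gap is the Jacobian: the polar integration formula on $\ss^*$ reads $\int_{\ss^*}f\,d\xi=a'\int_{\aa^*_+}f(\xi_+)\hat P(\xi_+)\,d\xi_+$ with $\hat P(\xi_+)=\prod_{\alpha\in\Sigma\cap\Phi_+}\alpha(\xi_+)^{m_\alpha}$, the $m_\alpha$ being the restricted-root multiplicities --- exactly what the paper uses in the proof of Theorem \ref{notunitary}, where the surviving factor is $\hat P(\lambda+\hat\rho)^{1/2}c(\lambda+\hat\rho)$. Your $\prod_{\alpha}\langle\alpha,\xi_+\rangle^2$ is correct only when every $m_\alpha=2$ (the group case $U=K\times K$); for $SU(n)/SO(n)$ all multiplicities equal $1$, and for $S^n$ the single restricted root has multiplicity $n-1$. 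Hence the factor $P(\lambda+\hat\rho)^2$ in (\ref{007}) cannot be produced by the polar Jacobian; it has to be extracted from the normalization of $\tilde\Omega_\infty$ in (\ref{tildeomegainfty}), whose factors carry $\langle\alpha,\xi_+\rangle^{-1}$, against the transverse Gaussian and the half-form pairing, so your accounting of the prefactor does not yet match the target formula.
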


\begin{proof}
The proof follows the same calculations as in the proof of theorems 5.5 and 5.8 of \cite{baier.hilgert.kaya.mourao.nunes:2023}. 
\end{proof}

{}

\begin{corollary}
    The distributional sections $\sigma^\infty_{\lambda, v^*}, \, \lambda\in \hat U_K, v^*\in V_\lambda^*$ are $\mathcal{P}_\infty$-polarized.
\end{corollary}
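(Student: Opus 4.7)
My plan is to verify the two conditions defining $\mathcal{P}_\infty$-polarization directly, exploiting the explicit distributional form of $\sigma^\infty_{\lambda, v^*}$ given in Theorem \ref{quantizationsconverge} and the description of $\mathcal{P}_\infty$ from Section \ref{sec-fibering}.

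First, I would treat the complex directions of $\mathcal{P}_\infty$, which are the anti-holomorphic tangent vectors to the coadjoint orbits $\mathcal{O}_{\xi_+} \cong U_\CC/B$ foliating the coisotropic distribution $\mathcal{E}$. Proposition \ref{prop-holomorphicdirections} identifies $F_{\lambda, v_\lambda^K\otimes v^*}$ as a holomorphic section of the ample Borel--Weil line bundle over each such orbit, so these vectors annihilate $F_{\lambda, v_\lambda^K\otimes v^*}$ along the leaves. The support of $\delta_{\mu_\mathrm{inv}^{-1}(\lambda+\hat\rho)}$ is the $T_\mathrm{inv}$-principal bundle over $\mathcal{O}_{\lambda+\hat\rho}$, which is saturated by these leaves, and Proposition \ref{polhalfform} guarantees that the half-form factor $F^{-1}_{\hat\rho, v^K_{\hat\rho}\otimes v_{\hat\rho}^K}\tilde\Omega_\infty^{1/2} = \Omega_\infty^{1/2}$ is itself $\mathcal{P}_\infty$-polarized. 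Together these ensure the full product is annihilated by the complex directions in the distributional sense.

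Next, for the real directions generating the $T_\mathrm{inv}$-action (namely, the Hamiltonian vector fields of the components of $\mu_\mathrm{inv}$), I would use that $\delta_{\mu_\mathrm{inv}^{-1}(\lambda+\hat\rho)}$ is $T_\mathrm{inv}$-invariant as a distribution: the torus preserves each level set of $\mu_\mathrm{inv}$, and $dx$ on $U/K$ is $U$-invariant. The polarization condition then reduces to matching the prequantum connection's contribution against the $T_\mathrm{inv}$-character by which the remaining factors of \eqref{007} transform: $F_{\lambda, v_\lambda^K\otimes v^*}$ transforms by $\chi_\lambda$, while the half-form piece $F^{-1}_{\hat\rho, v^K_{\hat\rho}\otimes v_{\hat\rho}^K}\tilde\Omega_\infty^{1/2}$ carries the $\chi_{\hat\rho}$-shift inherent to the metaplectic correction. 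The resulting total character $\chi_{\lambda+\hat\rho}$ is exactly the one required by the Bohr--Sommerfeld condition of Corollary 3.19 of \cite{baier.hilgert.kaya.mourao.nunes:2023}, yielding $\mathcal{P}_\infty$-polarization along the $T_\mathrm{inv}$-directions.

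The step I expect to be most delicate is making the $\chi_{\hat\rho}$-equivariance of the half-form fully explicit: using \eqref{tildeomegainfty}, one must count that each restricted root vector $E_\alpha - E_{\alpha^\sigma}$ appearing in the wedge carries $i\aa$-weight $\alpha|_\aa$, so that the total $T_\mathrm{inv}$-weight of $\tilde\Omega_\infty$ adds up to $2\hat\rho$ and its square root carries weight $\hat\rho$. A more structural alternative would be to pass to the distributional limit in $\nabla_{X_t}\sigma^{g+th}_{\lambda, v^*} = 0$ using the convergence of polarizations from Section \ref{subs-convpol}; the obstacle there is justifying the interchange of the distributional limit with first-order differentiation against test half-forms, which is precisely what the direct verification sidesteps.
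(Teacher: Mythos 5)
Your proposal is essentially correct, but it takes a genuinely different route from the paper. The paper's proof is a one-sentence limit argument: since $\mathcal{P}_\infty$ is generated pointwise by limits of the Hamiltonian vector fields spanning the K\"ahler polarizations $\mathcal{P}_{g_t}$ (Section \ref{subs-convpol}), and $\sigma^\infty_{\lambda,v^*}$ is the distributional limit of the $\mathcal{P}_{g_t}$-polarized sections $C_{t,h}\sigma^g_{\lambda,v^*}$ (Theorem \ref{quantizationsconverge}), the polarization condition passes to the limit --- this is exactly the ``structural alternative'' you mention at the end and set aside because of the limit-interchange issue. That interchange is in fact unproblematic: differentiation is continuous on distributions, and the coefficients of the generating vector fields converge in $C^\infty_{\mathrm{loc}}$ on $T^*(U/K)_\mathrm{reg}$, so the pairing of $\nabla_{\bar X_t}\sigma^{g_t}_{\lambda,v^*}=0$ with test half-forms passes to the limit. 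Your direct verification instead anticipates the computation the paper postpones to the proof of the subsequent theorem (the characterization of \emph{all} $\mathcal{P}_\infty$-polarized sections), where covariant constancy along the $T_\mathrm{inv}$-orbits is matched against the toric model and the Bohr--Sommerfeld condition; what it buys is a self-contained, explicit check, at the cost of the weight bookkeeping. On that bookkeeping one correction is in order: in \eqref{007} the factor $F_{\hat\rho, v^K_{\hat\rho}\otimes v_{\hat\rho}^K}$ multiplying $\delta_{\mu_\mathrm{inv}^{-1}(\lambda+\hat\rho)}$ carries the $\chi_{\hat\rho}$-shift on the line-bundle side, while the half-form factor $F^{-1}_{\hat\rho, v^K_{\hat\rho}\otimes v_{\hat\rho}^K}\tilde\Omega_\infty^{1/2}=\Omega_\infty^{1/2}$ is $T_\mathrm{inv}$-invariant (the weight $-\hat\rho$ of $F_{\hat\rho}^{-1}$ cancels the weight $+\hat\rho$ of $\tilde\Omega_\infty^{1/2}$ that you correctly extract from \eqref{tildeomegainfty}), consistent with Proposition \ref{polhalfform}. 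The total character $\chi_{\lambda+\hat\rho}$ you arrive at is the right one, and only the total matters if one uses the total derivation $\nabla_{\bar X}\otimes 1+1\otimes \mathcal{L}_{\bar X}$ on the tensor product, but as written your attribution of the $\hat\rho$-shift to the half-form piece relies on two compensating omissions rather than on the actual distribution of weights between the two tensor factors.
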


\begin{proof}
    This is an analog of the proof of Corollary 5.7 of \cite{baier.hilgert.kaya.mourao.nunes:2023}: one can generate $\mathcal{P}_\infty$ by limits of Hamiltonian vector fields generating the K\"ahler polarizations and $\sigma^\infty_{\lambda,v^*}$ arises as a limit of K\"ahler polarized sections.
\end{proof}

In fact, these distributional sections obtained at infinite Mabuchi geodesic time comprise all of the polarized sections of $\mathcal{P}_\infty.$

\begin{theorem}The vector space of $\mathcal{P}_\infty$-polarized sections is given by the closure of 
$$
W_\infty := \bigoplus_{\lambda\in \hat U_K}\left\{ \sigma^\infty_{\lambda,v^*}\mid v^*\in V_\lambda^*,\, \lambda\in \hat U_K\right\}.
$$
in $\left(C_c(T^*(U/K))\right)^*\otimes \tilde \Omega^\frac12_\infty$.
\end{theorem}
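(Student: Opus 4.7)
The plan is to combine the general structural result for fibering polarizations from \cite{baier.hilgert.kaya.mourao.nunes:2023} with the Borel--Weil identification of holomorphic sections on the coisotropic reductions. In one direction we already know from Theorem \ref{quantizationsconverge} and its corollary that each $\sigma^\infty_{\lambda,v^*}$ is $\mathcal{P}_\infty$-polarized, so $W_\infty$ and its closure sit inside the space of polarized distributional sections. The work is therefore in the reverse inclusion.

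For that, I would first invoke the multiplicity-free analysis recalled in Section \ref{sec-fibering} (Corollary 3.19 of \cite{baier.hilgert.kaya.mourao.nunes:2023}) to conclude that every $\mathcal{P}_\infty$-polarized section is supported on the Bohr--Sommerfeld set $\bigcup_{\lambda \in \hat U_K} \mu_\mathrm{inv}^{-1}(\lambda+\hat\rho)$, where discreteness along the torus directions comes from equivariance under $T_\mathrm{inv}$, and the restriction to the $K$-spherical sublattice of $\aa^*_+$ comes from Proposition \ref{e-mom-set}. This reduces the problem to analyzing polarized sections leaf by leaf.

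Second, fix $\lambda \in \hat U_K$ and restrict attention to the level set $\mu_\mathrm{inv}^{-1}(\lambda+\hat\rho)$, which is a principal $T_\mathrm{inv}$-bundle over the coisotropic reduction, namely the coadjoint orbit $\mathcal{O}_{\lambda+\hat\rho} \cong U_\CC/B$ as described in Section \ref{sec-fibering}. A $\mathcal{P}_\infty$-polarized section supported there must be (i) $T_\mathrm{inv}$-equivariant with character $\chi_\lambda$ along the torus fiber, and (ii) holomorphic along the complex base. By Borel--Weil these sections form a finite-dimensional space isomorphic to $V_\lambda^*$, and Proposition \ref{prop-holomorphicdirections} identifies $\{F_{\lambda, v_\lambda^K \otimes v^*} \mid v^* \in V_\lambda^*\}$ as the corresponding basis realized on $T^*(U/K)_\mathrm{reg}$. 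Pairing each such holomorphic datum with the torus-equivariant distributional factor $\delta_{\mu_\mathrm{inv}^{-1}(\lambda+\hat\rho)}$ and with the half-form $\tilde\Omega_\infty^{1/2}$ reproduces, up to a scalar depending only on $\lambda$, the sections $\sigma^\infty_{\lambda,v^*}$ of formula (\ref{007}). Summing over $\lambda \in \hat U_K$ and passing to the weak-$*$ closure in $(C_c(T^*(U/K)))^* \otimes \tilde\Omega_\infty^{1/2}$ then captures every $\mathcal{P}_\infty$-polarized section.

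The main obstacle I expect is the bookkeeping that connects the half-form trivialization $\tilde\Omega_\infty^{1/2}$ to the Borel--Weil line bundle $L_\lambda$ on each leaf: one must verify that the explicit $\lambda$-dependent normalization $c_0 (2\pi)^{l/2} i^l 2^{-\#(\Sigma \cap \Phi_+)} P(\lambda+\hat\rho)^2$ entering (\ref{007}) is precisely the scalar required to match the abstract Borel--Weil identification fiber-wise. The cleanest route is not to compute this intrinsically at $t=\infty$, but to import the normalizations from the K\"ahler side by using Theorem \ref{quantizationsconverge} to transport the parallel frame $\sigma^g_{\lambda,v^*}$ through the gCST and to read off the constants from the known behavior of $\Omega_{g_t}$ in Proposition \ref{limitcanonicalsection}.
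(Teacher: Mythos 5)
Your overall architecture (forward inclusion from the convergence results, reverse inclusion by localizing on the Bohr--Sommerfeld set and then arguing leaf by leaf via Borel--Weil) matches the paper's proof, but there are two concrete gaps in the reverse inclusion.

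First, support on $\bigcup_\lambda \mu_\mathrm{inv}^{-1}(\lambda+\hat\rho)$ does not by itself pin down the transverse distributional structure: a polarized section supported on a fiber could a priori involve derivatives of delta distributions in the directions transverse to the fiber. The paper excludes this by computing the prequantum connection form explicitly over $U\times(-i)\aa_+^*$, observing that along $T^*(U/K)_\mathrm{reg}$ it coincides with the toric model $T^*((S^1)^l)$, and then invoking Proposition 3.1 of \cite{baier.florentino.mourao.nunes:2011}, which says the covariant-constancy equations along the $T_\mathrm{inv}$-orbits admit only genuine Dirac deltas (no derivatives). Your appeal to $T_\mathrm{inv}$-equivariance and Corollary 3.19 of \cite{baier.hilgert.kaya.mourao.nunes:2023} gives the support statement but not this exclusion, so the step needs to be supplied.

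Second, and more seriously, your leafwise identification is off by the half-form shift. The leaf sits over $\lambda+\hat\rho$, and once the half-form $\Omega_\infty^{1/2}=F_{\hat\rho, v_{\hat\rho}^K\otimes v_{\hat\rho}^K}^{-1}\tilde\Omega_\infty^{1/2}$ is factored out, the holomorphic datum on the coisotropic reduction is a section of the Borel--Weil bundle $L_{\lambda+\hat\rho}\to\mathcal{O}_{\lambda+\hat\rho}$, so the leafwise space is $H^0(\mathcal{O}_{\lambda+\hat\rho},L_{\lambda+\hat\rho})\cong V_{\lambda+\hat\rho}^*$, not $V_\lambda^*$ with basis $F_{\lambda,v_\lambda^K\otimes v^*}$ as you assert. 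Reconciling this with the span of the $\sigma^\infty_{\lambda,v^*}$, $v^*\in V_\lambda^*$, is not a matter of matching the scalar $c_0(2\pi)^{l/2}i^l2^{-\#(\Sigma\cap\Phi_+)}P(\lambda+\hat\rho)^2$; it is a representation-theoretic step. The paper handles it by noting that $V_{\lambda+\hat\rho}^*$ is the Cartan component of $V_\lambda^*\otimes V_{\hat\rho}^*$, that the products $F_{\lambda,v_\lambda^K\otimes v^*}F_{\hat\rho,v_{\hat\rho}^K\otimes z^*}$ realize it, and that one is free to change the trivializing half-form by polarized factors $F_{\hat\rho,v_{\hat\rho}^K\otimes v_{\hat\rho}^K}F_{\hat\rho,v_{\hat\rho}^K\otimes z^*}^{-1}$ with compensating factors on the other tensor slot; it also checks, via Proposition \ref{prop-holomorphicdirections}, that no irreducible component of $V_\lambda^*\otimes V_{\hat\rho}^*$ other than the Cartan one yields polarized sections on that fiber. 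Without this argument your proposal does not establish that the $\sigma^\infty_{\lambda,v^*}$ exhaust the polarized sections supported on a given leaf, which is the crux of the theorem.
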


{}

\begin{proof}
Half-form corrected $\mathcal{P}_\infty$-polarized sections can be written in the form 
$$
\sigma = s\otimes \Omega_\infty^\frac12,
$$
where, from Proposition \ref{polhalfform}, $\Omega_\infty^\frac12$ is already $\mathcal{P}_\infty$-polarized. Note that instead of $\Omega_\infty^\frac12$ one can use other polarized sections of the half-form bundle of $\mathcal{P}_\infty$, with different divisors, by multiplying $\Omega_\infty$ by ($\mathcal{P}_\infty$-polarized) factors of the form
$F_{\hat\rho, v_{\hat\rho^K}\otimes v_{\hat\rho}^K} F_{\hat\rho, v_{\hat\rho}\otimes z^*}^{-1}, z^*\in V_{\hat\rho}^*$ and by multiplying with the inverse factor on the left factor of the tensor product. The section $\sigma$ will then be $\mathcal{P}_\infty$-polarized iff $s$ is $\mathcal{P}_\infty$-polarized.
The prequantum connection on $T^*U$ is given by $\nabla = d + i\theta$  where the
connection form can be written as
$$
\theta = \sum_{j=1}^n \xi_j \tilde \omega_j,
$$
where $\left\{\tilde \omega_j\right\}_{j=1, \dots, n}$ is a basis of right-invariant 1-forms on $U$, corresponding to an orthonormal basis of $\uu$, pulled-back to $T^*U$ by the canonical projection $T^*U\to U$ (see Sections 2 and 4 of \cite{kirwin.mourao.nunes:2013}), and $\xi_j$ are the coordinates of $\xi$ in the corresponding dual basis of $\uu^*$. By symplectic reduction 
\cite{guillemin.sternberg:1982}, this induces the prequantum connection on the prequantum line bundle on $T^*(U/K)$, so that over a point $[u,\xi_+]\in T^*(U/K)$, with $\xi_+\in -i\aa^*_+$, the connection form reads
$$
\sum_{j=1}^l (\xi_+)_j \tilde \omega_j,
$$
where indices have been chosen such that $j=1, \dots ,l$ 
runs over a basis of $\aa= i(\tt\cap \ss).$ This connection form exactly matches the one that one obtains for symplectic toric manifolds, along the open dense subset diffeomorphic to $T^*((S^1)^l)$. Therefore, using Fourier decomposition of sections with respect to the 
action of $T_\mathrm{inv}$, the equations for covariant constancy along the real directions of 
$\mathcal{P}_\infty$, given precisely by the orbits of $T_\mathrm{inv}$, match the equations of covariant constancy along the real toric polarization of a symplectic toric manifold. 
From Proposition 3.1 in \cite{baier.florentino.mourao.nunes:2011}, the solutions are proportional to a Dirac delta distribution supported on the Bohr-Sommerfeld set, which in our case is given by the level sets $\mu_\mathrm{inv}^{-1}(\lambda+\hat \rho)$ for highest weights $\lambda\in \hat U_K,$ as mentioned in Section \ref{sec-fibering} and as can be explicitly checked from the previous formula for the connection form. In particular, there are no solutions proportional to derivatives of Dirac delta distributions. On the other hand, covariant constancy along the holomorphic directions of $\mathcal{P}_\infty$ just correspond to holomorphicity along the coadjoint orbits 
$\mathcal{O}_{\lambda+\hat\rho}$. But then all such holomorphic sections arise by taking the sections $\sigma^\infty_{\lambda,v^*}$ above, with $v^*\in V_\lambda^*.$ Indeed,
$V_{\lambda+\hat\rho}^*$ is an irreducible component in the tensor product 
$V_\lambda^*\otimes V_{\hat\rho}^*$.  
Thus $F_{\lambda+\hat\rho,v^K_{\lambda+\hat \rho}\otimes w^*}$, for $w\in V_{\lambda+\hat\rho}^*$, decomposes into sums of products of $F_{\lambda,v_\lambda^K\otimes v^*} F_{\hat\rho,v_{\hat\rho}^K\otimes z^*}$, $v^*\in V_\lambda^*, z^*\in V_{\hat\rho}^*.$
So, by multiplying and dividing 
$\Omega_\infty^\frac12$ by different 
$F_{\hat\rho, v^K_{\hat \rho}\otimes z^*}, \, z^*\in V_{\hat\rho}^*$, in (\ref{007}) we can describe all holomorphic sections of the Borel-Weil line bundle $L_{\lambda+\hat\rho}\to \mathcal{O}_{\lambda+\hat\rho}.$ (Note also that, as described in Proposition \ref{prop-holomorphicdirections}, the holomorphic directions of $\mathcal{P}_\infty$ along $\mu_\mathrm{inv}^{-1}(\lambda+\hat\rho)$ select as polarized functions elements of the ring generated by quotients of the form $F_{\lambda+\hat\rho,v^K_{\lambda+\hat \rho}\otimes w_1^*}F_{\lambda+\hat\rho,v^K_{\lambda+\hat \rho}\otimes w_2^*}^{-1}$, for $w_1,w_2\in V_{\lambda+\hat\rho}^*$, so that no other irreducible components of $V_\lambda^*\otimes V_{\hat\rho}^*$ contribute to give $\mathcal{P}_\infty$-polarized sections supported on that fiber of $\mu_\mathrm{inv}.$
\end{proof}
{}
Since $U$ itself can be described as a symmetric space of compact type, we obtain the 

\begin{corollary}The Hilbert space 
$$
\mathcal{H}_\mathrm{KW} = \widehat \bigoplus_{\lambda\in \hat U_K}{\left\{\sigma_{\lambda,A}^\infty \mid \lambda \in \hat U, A\in \mathrm{End}(V_\lambda)\right\}},
$$
as described in Section 5.3 of \cite{baier.hilgert.kaya.mourao.nunes:2023}, is the Hilbert space of 
polarized sections for the Kirwin-Wu polarization of $T^*U$.
\end{corollary}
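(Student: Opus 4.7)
The plan is to specialize the preceding theorem to the case where $U$ itself is realized as the symmetric space of compact type $(U\times U)/U_\mathrm{diag}$, as detailed in Example \ref{ex-no-arrows}. In that realization the ambient group becomes $U\times U$ with isotropy $U_\mathrm{diag}$, and $T^\ast((U\times U)/U_\mathrm{diag}) \cong T^\ast U$ as symplectic manifolds. By construction of $\mathcal{P}_\infty$ in Section \ref{sec-fibering} (introduced as a direct generalization of the Kirwin--Wu polarization studied in \cite{baier.hilgert.kaya.mourao.nunes:2023}), the mixed polarization produced on this $T^\ast U$ coincides with $\mathcal{P}_\mathrm{KW}$.

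Next, I would identify the set of spherical representations. By Schur's lemma, a $U_\mathrm{diag}$-invariant vector in an irreducible $V_\mu\boxtimes V_\nu$ of $U\times U$ exists iff $V_\nu\cong V_\mu^\ast$, and then the invariant subspace is one-dimensional, generated by the identity in $V_\mu\otimes V_\mu^\ast\cong\End(V_\mu)$. Hence the spherical representations are naturally labelled by $\lambda\in\hat U$; the spherical vector $v_\lambda^K$ corresponds to $\mathrm{Id}_{V_\lambda}\in\End(V_\lambda)$, and the dual space $(V_\lambda\boxtimes V_\lambda^\ast)^\ast\cong\End(V_\lambda)$, so the parameter $v^\ast$ of the general construction translates into an endomorphism $A$. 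Applying the preceding theorem then produces the closure of $\bigoplus_{\lambda\in\hat U}\{\sigma^\infty_{\lambda,A}\mid A\in\End(V_\lambda)\}$ as the space of $\mathcal{P}_\mathrm{KW}$-polarized sections, which by definition is $\mathcal{H}_\mathrm{KW}$.

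The main obstacle is to verify that the explicit formula (\ref{007}) for $\sigma^\infty_{\lambda,v^\ast}$ reproduces the Kirwin--Wu sections of \cite{baier.hilgert.kaya.mourao.nunes:2023} under this dictionary. Concretely, one needs to check: that the half-sum $\hat\rho$ of positive restricted roots for $(\uu\oplus\uu, \uu_\mathrm{diag})$ matches the expected multiple of the usual $\rho$ of $U$, each restricted root arising from the identified pair of roots in (\ref{e-sigma-on-roots}) and contributing with multiplicity two; that $T_\mathrm{inv}$ and $\mu_\mathrm{inv}$ coincide with their counterparts used to define $\mathcal{P}_\mathrm{KW}$; and that the product $P(\lambda+\hat\rho)$ together with the spherical Borel--Weil function $F_{\hat\rho,v_{\hat\rho}^K\otimes v_{\hat\rho}^K}$ reproduce the corresponding factors in the earlier paper. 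All of these identifications are immediate from the analysis carried out in Example \ref{ex-no-arrows}, so the obstacle is purely one of bookkeeping.
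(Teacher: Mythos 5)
Your proposal is correct and follows essentially the same route as the paper, which offers no proof beyond the remark that $U$ itself is the compact symmetric space $(U\times U)/U_{\mathrm{diag}}$ (Example \ref{ex-no-arrows}) and that the preceding theorem therefore specializes to it. Your identification of the spherical representations of $U\times U$ with $\hat U$, of $v_\lambda^K$ with $\mathrm{Id}_{V_\lambda}$, and of the dual-space parameter $v^*$ with $A\in\End(V_\lambda)$, together with the multiplicity-two bookkeeping for the restricted roots, is exactly the intended (and here made explicit) content of that specialization.
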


{}

\subsection{The Hilbert space $\mathcal{H}_\infty$}
\label{subsection-hinfinity}

In this section, we will see that the inner products on the fibers of the quantum bundle induce naturally an inner product structure on $W_\infty$ so that one obtains the Hilbert space of 
$\mathcal{P}_\infty$-polarized sections, 
$\mathcal{H}_\infty$,  by taking the norm completion of $W_\infty$.

For the Hilbert space of the Schr\"odinger quantization $\mathcal{H}_\mathrm{Sch}=L^2(U/K)\otimes \sqrt{dx}$, we have the $U$-invariant inner 
product, 
$$
\langle \sigma_{\lambda,v^*}^0, \sigma_{\tilde \lambda, \tilde v^*}^0\rangle =
d_\lambda^{-1}\delta_{\lambda\tilde\lambda} \langle v^*, \tilde v^*\rangle_{V_\lambda^*}.
$$

Let us now study the evolution of the norms of $\sigma^{th}_{\lambda, v^*}$, $t>0$, for $h\in \Conv^\infty_{\rm unif}(\uu^\ast)^{\Ad^\ast_U}$ and $\lambda\in \hat U_K.$
From Theorem 3.3.25 in \cite{kaya:2015} we obtain that, for $g_\mathrm{std}=\frac12\vert\vert\xi\vert\vert^2$, $\vert\vert\mathcal{L}_{g_\mathrm{std}}^*\Omega^\frac12\vert\vert$ is an $\mathrm{Ad}^*_K$-invariant function 
$$
\vert\vert\mathcal{L}_{g_\mathrm{std}}^*\Omega^\frac12\vert\vert^2 ([x,\xi]) = \eta(\xi),
$$
where 
$$
\eta(\xi_+) := \Pi_{\alpha \in \Sigma\cap \Phi_+} \left(\frac{\mathrm{sinh}(2\alpha(\xi_+))}{2\alpha(\xi_+)}\right)^{\frac{m_\alpha}{2}},
$$
for $\xi_+\in -i\aa^*_+$.
For other K\"ahler structures determined by a symplectic potential 
$g\in\Conv^\infty_{\rm unif}(\uu^\ast)^{\Ad^\ast_U}$, we need to  pull-back by the composition of Legendre transforms $\LL_g \circ \LL_{g_{\mathrm{std}}}^{-1}$. This composition of Legendre transforms is not a symplectomorphism of $T^*(U/K)$ but it is straightforward to obtain the correcting factor for the pull-back of the Liouville measure (see, for instance, Lemma 2.4 in \cite{kirwin.mourao.nunes:2014}) to obtain for the contribution of the half-form
$$
\vert\vert\mathcal{L}_g^*\Omega^\frac12\vert\vert^2 ([x,\xi_+]) = \eta(d_{\xi_+}g).
\det({\mathrm{Hess_g}_{\vert_{\ss^*}}(\xi_+))^\frac12}.
$$
We will now study the behavior of the norms of the polarized sections 
$\sigma^{g_t}_{\lambda, v^*}$ as $t\to +\infty$ where, throughout this section
we will take
$$
g_t=th, \, t>0, h\in \Conv^\infty_{\rm unif}(\uu^\ast)^{\Ad^\ast_U},
$$
so that we are considering a Mabuchi geodesic ray that connects the Schr\"odinger (vertical) polarization to the polarization $\mathcal{P}_\infty$.
Since the half-form corrected inner product on $\mathcal{H}_{g_t}$ is $U$-invariant, we obtain that two different isotypical components $\mathcal{H}^\lambda_{g_t}, \mathcal{H}^{\tilde \lambda}_{g_t}$ are orthogonal for $\lambda \neq \tilde \lambda.$ Moreover, we have that on each isotypical component $\mathcal{H}^\lambda_g$ the inner product is fixed up to a multiplicative constant and it is determined, in particular, by the norm of the $K$-left-invariant vectors
$$
\sigma^{g_t}_{\lambda, (v_\lambda^K)^*}, \, \, \lambda\in \hat U_K
$$
 with $(v_\lambda^K)^*$ defined from $v_\lambda^K$ via the anti-linear bijection $V_\lambda\to V_\lambda^*$ explained in Appendix~\ref{appendix}.
For $\lambda\in \hat U_K$ let $v_\lambda^K$ be a unit norm spherical vector (for an $U$-invariant inner product on $V_\lambda$) and consider the Harish-Chandra $c$-function \cite[\S 4]{HPVinberg:2002},
$$
c(\lambda+\hat \rho) = \vert\langle v_\lambda, v_\lambda^K\rangle_{V_\lambda}\vert^2\not= 0.
$$
Note here that $v_\lambda$ and $v_\lambda^K$ were assumed to be unit vectors so that the expression for $c(\lambda+\hat\rho)$ is independent of the normalization of the inner product on $V_\lambda$.  

\begin{theorem}\label{notunitary}There is a non-zero 
constant $C_0$ (independent of $h$ and $\lambda$) such that
$$\lim_{t\to +\infty} \vert\vert \sigma^{g_t}_{\lambda, (v_\lambda^K)^*} 
\vert\vert_{\mathcal{H}_{g_t}}^2 = C_0\,  \hat P(\lambda+\hat\rho)^\frac12 c(\lambda+\hat\rho)  \,\vert\vert\sigma^{0}_{\lambda, (v_\lambda^K)^*} 
\vert\vert_{\mathcal{H}_{\mathrm{Sch}}}^2,$$
where
$\hat P(\xi_+) = \Pi_{\alpha\in \Sigma \cap \Phi_+} \alpha(\xi_+)^{m_\alpha}.$
\end{theorem}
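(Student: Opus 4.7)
\medskip

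\textbf{Proof plan for Theorem \ref{notunitary}.} The plan is to express $\|\sigma^{g_t}_{\lambda,(v_\lambda^K)^*}\|^2_{\mathcal{H}_{g_t}}$ as an explicit oscillatory integral on $-i\aa^*_+$ and then apply Laplace's method as $t\to\infty$. First I would write out the norm squared using the formula (\ref{sinatra}), the K\"ahler potential (\ref{ee-9cc}), and the half-form norm $\eta(d_{\xi_+}g_t)\det(\mathrm{Hess}_{g_t}|_{\ss^*}(\xi_+))^{1/2}$, obtaining an integral over $T^*(U/K)$ of
\[
 e^{-2t h(\lambda+\hat\rho)}\,\bigl|f^{g_t}_{\lambda,v_\lambda^K\otimes(v_\lambda^K)^*}\bigr|^{2}\,e^{-\kappa_{g_t}}\,\eta(d_{\xi_+}g_t)\,\det(\mathrm{Hess}_{g_t}|_{\ss^*})^{1/2}\,d\mu_L.
\]
Using $U$-invariance of both the measure and the integrand on the $U/K$ factor, and then reducing the $\mathrm{Ad}^*_K$-invariant integral over $\ss^*$ to an integral over the positive Weyl chamber $-i\aa^*_+$, the symmetric space Weyl integration formula introduces a polynomial Jacobian proportional to $\hat P(\xi_+)$, so the whole quantity takes the shape $\int_{-i\aa^*_+} e^{-t\Phi(\xi_+)}A_t(\xi_+)\,d\xi_+$ with $\Phi$ independent of $t$ and $A_t$ of at most polynomial growth in $t$.

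Next I would analyze $|f^{g_t}|^2$ by decomposing the spherical vector into weight components $v_\lambda^K=\sum_\nu c_\nu v_{\lambda,\nu}$, where by normalization $|c_\lambda|^2=c(\lambda+\hat\rho)$. Since $id_{\xi_+}g_t\in\aa_+$ for $\xi_+\in-i\aa^*_+$, the operator $e^{i d_{\xi_+}g_t}$ acts on the weight vector $v_{\lambda,\nu}$ with growth factor $e^{t\langle\nu,id_{\xi_+}h\rangle}$, so after squaring and integrating in $x$ over $U$ (which, by Schur orthogonality, kills off-diagonal weight cross-terms up to negligible contributions), the highest weight $\nu=\lambda$ dominates exponentially and supplies the overall factor $c(\lambda+\hat\rho)$. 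Inserted into the exponent, the combined phase function becomes
\[
 \Phi(\xi_+)\;=\;\kappa_{h}(\xi_+)+2h(\lambda+\hat\rho)-2\langle\lambda,id_{\xi_+}h\rangle
 \;=\; h(\xi_+)-\langle\xi_+-2(\lambda+\hat\rho),d_{\xi_+}h\rangle+\mathrm{const}.
\]
Legendre duality together with the uniform strict convexity of $h$ shows that $\Phi$ attains its unique nondegenerate minimum at $\xi_+^\ast=-i(\lambda+\hat\rho)$, with Hessian $\mathrm{Hess}_h(\xi_+^\ast)|_{\aa^*}$.

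Finally I would assemble the Laplace asymptotics. The standard saddle point formula produces a factor $(2\pi/t)^{l/2}\,|\det\mathrm{Hess}_h(\xi_+^\ast)|_{\aa^*}|^{-1/2}$, which precisely cancels the $\det(\mathrm{Hess}_{g_t}|_{\ss^*})^{1/2}$ contribution from the half-form along the $\aa^*$ directions, up to an absolute constant $C_0$ independent of $h$ and $\lambda$; the transverse Hessian directions contribute only to this universal constant. At the critical point the half-form density $\eta(d_{\xi_+^\ast}g_t)$ contributes, in its large-$t$ regime, the polynomial factor $\prod_{\alpha\in\Sigma\cap\Phi_+}\alpha(\lambda+\hat\rho)^{-m_\alpha/2}=\hat P(\lambda+\hat\rho)^{-1/2}$ (the exponential growth $e^{2\hat\rho(d_{\xi_+^\ast}g_t)}$ being absorbed by the $e^{-\kappa_{g_t}}$ term and the exponential piece of $|f^{g_t}|^2$ at the critical point). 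Together with the Weyl integration Jacobian $\hat P(\lambda+\hat\rho)$, this gives the net $\hat P(\lambda+\hat\rho)^{1/2}$ announced in the statement, while $c(\lambda+\hat\rho)$ comes from the highest weight coefficient and $\|\sigma^0_{\lambda,(v_\lambda^K)^*}\|^2_{\mathcal{H}_{\mathrm{Sch}}}$ just absorbs the remaining representation-independent normalization $d_\lambda^{-1}$. The main obstacle is the bookkeeping in the last step: carefully matching the exponential factors at $\xi_+^\ast$ so that the $e^{2\hat\rho(\cdot)}$ piece of $\eta$ cancels against the $\rho$-shift in the exponent, thus leaving \emph{exactly} the polynomial power $\hat P^{1/2}$ and no extra exponential or polynomial residue depending on $h$ beyond the universal constant $C_0$.
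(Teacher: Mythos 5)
Your overall strategy is exactly the paper's: write the half-form corrected norm as an integral over $T^*(U/K)$, integrate out $U$ via Schur/Weyl orthogonality, reduce the resulting $\Ad^*_K$-invariant integral over $\ss^*$ to $\aa^*_+$ with the polar-coordinates Jacobian $\hat P(\xi_+)$, extract $c(\lambda+\hat\rho)=|\langle v_\lambda, v_\lambda^K\rangle|^2$ from the dominance of the highest-weight component of the spherical vector, and finish with a Laplace/delta-function localization at $\xi_+=\lambda+\hat\rho$ in which $(2\pi/t)^{l/2}\det(\Hess_h|_{\aa^*})^{-1/2}$ cancels the $\aa^*$-block of the half-form Hessian. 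All of that matches the paper's proof.

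The one place where your accounting goes wrong is the step you yourself flag as the main obstacle, and your proposed resolution of it does not work as stated. You claim that at the critical point $\eta(d_{\xi_+^\ast}g_t)$ contributes the polynomial factor $\prod_{\alpha}\alpha(\lambda+\hat\rho)^{-m_\alpha/2}=\hat P(\lambda+\hat\rho)^{-1/2}$ and that the transverse Hessian directions contribute only a universal constant. Neither is correct: the large-$t$ polynomial part of $\eta(td_{\xi_+}h)$ is $\prod_\alpha\bigl(\alpha(d_{\xi_+}h)\bigr)^{-m_\alpha/2}=\hat P\bigl((d_{\xi_+}h)^*\bigr)^{-1/2}$, evaluated at the Legendre image $d_{\xi_+}h$ and hence $h$-dependent (it equals $\hat P(\lambda+\hat\rho)^{-1/2}$ only for $h=\tfrac12\|\xi\|^2$). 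Correspondingly, the transverse block of $\Hess_h|_{\ss^*}$ is not a constant: using $\Hess_h(\xi_+)(E_\alpha-E_{\alpha^\sigma})=\frac{\alpha(d_{\xi_+}h)}{\alpha(\xi_+)}(E_\alpha-E_{\alpha^\sigma})$ one gets $\det\Hess_h(\xi_+)|_{\ss^*}=\frac{\hat P((d_{\xi_+}h)^*)}{\hat P(\xi_+)}\det\Hess_h(\xi_+)|_{\aa^*}$. It is precisely the cancellation of $\hat P((d_{\xi_+}h)^*)^{\pm 1/2}$ between $\eta$ and this transverse Hessian block that removes the $h$-dependence, and the leftover $\hat P(\xi_+)^{-1/2}$ combines with the Weyl Jacobian $\hat P(\xi_+)$ to give $\hat P(\xi_+)^{1/2}\to\hat P(\lambda+\hat\rho)^{1/2}$. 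Your two misattributions happen to cancel numerically, but since the whole point of the theorem is that $C_0$ is independent of $h$ while the representation-dependent factor is exactly $\hat P(\lambda+\hat\rho)^{1/2}c(\lambda+\hat\rho)$, the argument as you wrote it does not establish $h$-independence; you need the eigenvalue computation for $\Hess_h$ on the restricted-root directions to close the gap.
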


{}

\begin{proof}From the expression for the half-form corrected inner product on the space of K\"ahler polarized sections (see, for example, Section 4 in \cite{kirwin.mourao.nunes:2013}), we need to compute
$$
\vert\vert \sigma^{g_t}_{\lambda, (v_\lambda^K)^*} 
\vert\vert_{\mathcal{H}_{g_t}}^2 = e^{-2th(\lambda +\hat \rho)}\cdot
$$
$$
\cdot \int_{T^*(U/K)} \overline{\mathrm{tr}(\pi_\lambda(x e^{itd_\xi h}v_\lambda^K\otimes (v_\lambda^K)^*))} \mathrm{tr}(\pi_\lambda(x e^{itd_\xi h}v_\lambda^K\otimes (v_\lambda^K)^*))  \cdot
$$
$$
\cdot e^{-t\kappa_{h}}\eta (td_\xi h) \det (\mathrm{Hess}_{th})_{\vert_{\ss^*}}^\frac12 \varepsilon,
$$
where $\varepsilon$ is the Liouville measure.
From, Theorem 3.3.41 in \cite{kaya:2015} (or Theorem A.4 in \cite{kaya:2020}), we obtain that, for some non-zero constant $a$, and $f$ any integrable function on 
$T^*(U/K)\cong U\times_K \ss^*$
$$
\int_{T^*(U/K)} f \varepsilon = a \int_{U\times \ss^*} (p^*f) dx d\xi,
$$
where $p:U\times \ss^* \to U\times_K \ss^*$ is the canonical projection, $dx$ is the normalized Haar measure on $U$ and $d\xi$ is the Lebesgue measure on $\ss^*.$ Applying this above and using the Weyl orthogonality relations
$$
\int_U \overline{\pi_\lambda (x)_{ij}} \pi_\lambda(x)_{kl} dx = 
d_\lambda^{-1} \delta_{ik} \delta_{jl},
$$
we obtain
$$
\vert\vert \sigma^{g_t}_{\lambda, (v_\lambda^K)^*} 
\vert\vert_{\mathcal{H}_{g_t}}^2 = a d_{\lambda}^{-1} e^{-2th(\lambda +\hat \rho)}\cdot
$$
$$
\cdot
\int_{\ss^*} \mathrm{tr}( \pi_\lambda(e^{2itd_\xi h}) 
v_\lambda^K\otimes (v_\lambda^K)^* ) e^{-t\kappa_{h}} \eta(td_\xi h) \det (\mathrm{Hess}_{th})_{\vert_{\ss^*}}^\frac12 d\xi,
$$
where we take a spherical vector $v^K_\lambda$ of norm one.
The integrand is $\mathrm{Ad}^*_K$-invariant. From Theorem I.5.17
in \cite{helgason:1984}, we obtain that for $f$ any integrable $\mathrm{Ad}^*_K$-invariant function on $\ss^*$
$$
\int_{\ss^*} f(\xi) d\xi = a' \int_{\aa^*_+} f(\xi_+) 
\hat P(\xi_+)d\xi_+
$$
for some non-zero constant $a'$.
Therefore,
$$
\vert\vert \sigma^{g_t}_{\lambda, (v_\lambda^K)^*} 
\vert\vert_{\mathcal{H}_{g_t}}^2 = 
$$
$$
= a^{''} d_{\lambda}^{-1} e^{-2th(\lambda +\hat \rho)}
\int_{\aa^*_+} \mathrm{tr}( \pi_\lambda(e^{2itd_{\xi_+} h}) 
v_\lambda^K\otimes (v_\lambda^K)^* ) e^{-t\kappa_{h}} \eta(td_{\xi_+} h) \det (\mathrm{Hess}_{th})_{\vert_{\ss^*}}^\frac12 \hat P (\xi_+) d\xi_+,
$$
where $a^{''}$ is a non-zero constant.
Therefore, the leading term in the expression for the norm as $t\to +\infty$ is
$$
\lim_{t\to +\infty} \vert\vert \sigma^{g_t}_{\lambda, (v_\lambda^K)^*} 
\vert\vert_{\mathcal{H}_{g_t}}^2 = a^{''} d_{\lambda}^{-1} c(\lambda+\hat\rho)\cdot
$$
$$
\lim_{t\to +\infty} t^{d/2} e^{-2th(\lambda +\hat \rho)}
\int_{\aa^*_+} e^{-2t\langle\lambda,d_{\xi_+} h\rangle} 
 e^{-t\kappa_{h}} \eta(td_{\xi_+} h) \det (\mathrm{Hess}_{h})_{\vert_{\ss^*}}^\frac12 \hat P (\xi_+) d\xi_+,
$$
where $c(\lambda+\hat\rho)$ is the Harish-Chandra $c$-function and $d=\dim \ss^*$.
From (\ref{e-rest-root-space-dec}) and the proof of Proposition \ref{prop-leftinvcanonical}, we obtain that 
$$
\mathrm{Hess}_h(\xi_+) (E_\alpha - E_{\alpha^\sigma}) = 
(2\alpha(\xi_+))^{-1} \mathrm{ad}^*_{(d_{\xi_+}h)^*} (E_\alpha + E_{\alpha^\sigma}),
$$
so that along the subspace 
$$
(\aa^*)^\perp = \bigoplus_{\alpha\in (\Phi\setminus\Phi_0)\cap \Phi_+} (E_\alpha - E_{\alpha^\sigma})\subset \ss^*
$$
$\mathrm{Hess}_{h}(\xi_+)$ is diagonal with entries
$$
\frac{\alpha (d_{\xi_+}h)}{\alpha(\xi_+)},
$$
which is analogous to the case of $T^*U$ (see the proof of Lemma 4.14 in \cite{baier.hilgert.kaya.mourao.nunes:2023}). Therefore
$$
\det \mathrm{Hess}_{h} (\xi_+)_{\vert_{\ss^*}} = \frac{\hat P((d_{\xi_+}h)^*)}{\hat P(\xi_+)} \det \mathrm{Hess}_{h} (\xi_+)_{\vert_{\aa^*}}.
$$
{}
Therefore, the leading term for the norm as $t\to +\infty$ is
$$
\lim_{t\to +\infty} \vert\vert \sigma^{g_t}_{\lambda, (v_\lambda^K)^*} 
\vert\vert_{\mathcal{H}_{g_t}}^2 = a^{''} d_{\lambda}^{-1} c(\lambda+\hat\rho) \cdot
$$
$$
\cdot \lim_{t\to +\infty}  t^{d/2}e^{-2th(\lambda +\hat \rho)}
\int_{\aa^*_+} e^{-2t\langle\lambda,d_{\xi_+} h\rangle} 
 e^{-t\kappa_{h}} \eta(td_{\xi_+} h) \det (\mathrm{Hess}_{h})_{\vert_{\aa^*}}^\frac12 (\hat P (\xi_+))^\frac12 (\hat P ((d_{\xi_+}h)^*))^\frac12 d\xi_+.
$$
where in leading order
$$
\eta (td_{\xi_+}h) \sim \mathrm{const.}\, t^{-\# (\Sigma\cap \Phi_+)}e^{2t\hat\rho(d_{\xi_+}h)} P((d_{\xi_+}h)^*)^{-\frac12}.
$$

We obtain,
$$
\lim_{t\to +\infty} \vert\vert \sigma^{g_t}_{\lambda, (v_\lambda^K)^*} 
\vert\vert_{\mathcal{H}_{g_t}}^2 = a^{'''} d_{\lambda}^{-1} c(\lambda+\hat\rho) \cdot
$$
$$
\cdot\lim_{t\to +\infty}  t^{l/2}e^{-2th(\lambda +\hat \rho)}
\int_{\aa^*_+} e^{-2t\langle(\lambda +\hat\rho),d_{\xi_+} h\rangle} 
 e^{-t\kappa_{h}}  \det (\mathrm{Hess}_{h})_{\vert_{\aa^*}}^\frac12 (\hat P (\xi_+))^\frac12  d\xi_+,
$$
where $a^{'''}$ is a non-zero constant.
Just as in the proof of Theorem 4.1 in \cite{kirwin.mourao.nunes:2014}
or in the proof of Lemma 5.4 in \cite{baier.hilgert.kaya.mourao.nunes:2023}, the exponentials, the power $t^{l/2}$ and the determinant of the Hessian of $h$ along $\aa^*$ produce in the limit $t\to +\infty$, up to a constant,
a delta function $\delta(\lambda+\hat\rho)$ so that 
$$
\lim_{t\to +\infty} \vert\vert \sigma^{g_t}_{\lambda, (v_\lambda^K)^*} 
\vert\vert_{\mathcal{H}_{g_t}}^2 = a^{''''} d_{\lambda}^{-1} \hat P(\lambda+\hat\rho)^\frac12 c(\lambda+\hat\rho),
$$
for a non-zero constant $a^{''''}$.
{}
\end{proof}

\begin{remark}
    The factor of $c(\lambda+\hat \rho)$ in the statement of Theorem \ref{notunitary} is consistent with the results of Stenzel in \cite{stenzel:1999}. By identifying the fibers of $T^*(U/K)$ with the non-compact dual symmetric space to $U/K$, he considers a natural measure on $T^*(U/K)$ for which the time-$t$ CST, 
    for the case $h=\frac12 \vert\vert\xi\vert\vert^2$, becomes as unitary transform from $L^2(U/K)$ to a space of holomorphic functions on $U_\CC/K_\CC$. This measure involves the time-$2t$ heat kernel on the non-compact dual symmetric space (see, for instance, \cite{anker.ostellari.2004}) which carries a factor of 
    $\vert c(\mu)\vert^{-2}$ (from the Plancherel formula for non-compact symmetric spaces) and an integration along $\mu\in \aa^*$. From 
    Theorem 3 in \cite{stenzel:1999}, since this CST is unitary with respect to this choice of measure, one can evaluate the norm of 
    $\mathrm{tr} (\pi_\lambda(x e^{i\xi_+}) v_\lambda^K\otimes v^* ) $ asymptotically as $t\to +\infty.$ The spherical function $\varphi_\mu$ which features in the heat kernel will give a factor of $c(\mu)$ in the asymptotic limit (see Section 2 of \cite{Helgason:1964} or Section 4 of \cite{HPVinberg:2002}). The integration along $\xi_+$ then produces Dirac delta functions supported on the points $(\nu+\hat\rho)$, where $\nu$ runs over the weights of $V_\lambda$. In the asymptotic limit only the highest weight survives and the remaining factor of $\vert c\vert^{-1}$ localizes on $(\lambda+\hat\rho)$, consistently with Theorem \ref{notunitary}.
\end{remark}
{}

Using Theorems \ref{quantizationsconverge} and \ref{notunitary}, we define an inner product on
$W_\infty$ induced from the asymptotic limit of the inner products on the Hilbert spaces for half-form corrected K\"ahler quantizations $\mathcal{H}_\mathrm{th}$ along {\it any} geodesic path of $U$-invariant K\"ahler structures $th, t>0,$ that is by declaring that the generators
$$
\left(\frac{\hat P (\lambda+\hat\rho)^\frac12 c(\lambda+\rho)}{d_\lambda}\right)^{-\frac12} \sigma^\infty_{\lambda, e_j^*}, \lambda\in \hat U_K, 
$$
give an orthonormal set, where $e_j, j=1, \dots, d_\lambda$ is an orthonormal basis for $V_\lambda$.
Taking the norm completion we obtain 
the Hilbert space 
$$
\mathcal{H}_\infty := \widehat\bigoplus_{\lambda\in \hat U_K}{\left\{\sigma^\infty_{\lambda,v^*}\mid  \lambda\in \hat U_K, v^*\in V_\lambda^* \right\}}
$$
of $\mathcal{P}_\infty$-polarized sections.

Since the inner products on $U$-isotypical components in $\mathcal{H}_\mathrm{Sch}$ and in 
$\mathcal{H}_\infty$ are related by $\lambda$-dependent constants we obtain, as a corollary, 

\begin{theorem}\label{thm_asympnot}
The maps $C_{t,h}: \mathcal{H}_{\mathrm{Sch}} \to \mathcal{H}_{th}$ are not asymptotically unitary up to a ($h$-independent) constant as $t\to +\infty$ for any $h\in  \Conv^\infty_{\rm unif}(\uu^\ast)^{\Ad^\ast_U}.$
\end{theorem}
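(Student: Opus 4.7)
The plan is to exploit the $U$-equivariance of $C_{t,h}$ to reduce asymptotic unitarity to a single scalar on each isotypical component, then to use Theorem \ref{notunitary} to show that no single constant can match all these scalars simultaneously.

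First I would observe that, by $U$-equivariance of $C_{t,h}$ together with irreducibility of the isotypical components $\mathcal{H}_{\mathrm{Sch}}^\lambda \cong V_\lambda^* \cong \mathcal{H}_{th}^\lambda$, Schur's lemma forces the restriction $C_{t,h}\vert_{\mathcal{H}_{\mathrm{Sch}}^\lambda}$ to be a scalar multiple of a $U$-equivariant unitary intertwiner. The squared-norm ratio
$$
r_{t,h}(\lambda) := \frac{\|C_{t,h}\sigma^0_{\lambda,v^*}\|^2_{\mathcal{H}_{th}}}{\|\sigma^0_{\lambda,v^*}\|^2_{\mathcal{H}_{\mathrm{Sch}}}}
$$
is therefore independent of $v^* \in V_\lambda^*$, and $C_{t,h}$ is asymptotically unitary up to an $h$-independent constant $C$ precisely when $r_{t,h}(\lambda) \to C$ as $t \to +\infty$ for every $\lambda \in \hat U_K$.

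Next I would specialise to $v^* = (v_\lambda^K)^*$; since $C_{t,h}\sigma^0_{\lambda,(v_\lambda^K)^*} = \sigma^{th}_{\lambda,(v_\lambda^K)^*}$, Theorem \ref{notunitary} yields the explicit asymptotic
$$
\lim_{t\to+\infty} r_{t,h}(\lambda) = C_0\,\hat P(\lambda+\hat\rho)^{1/2}\,c(\lambda+\hat\rho),
$$
with $C_0$ independent of both $h$ and $\lambda$. Asymptotic unitarity up to an $h$-independent constant would thus force the identity $\hat P(\lambda+\hat\rho)^{1/2}\,c(\lambda+\hat\rho) = C/C_0$ for every $\lambda \in \hat U_K$.

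The final step is to exhibit the obstruction. Since $\hat P(\lambda+\hat\rho) = \prod_{\alpha\in\Sigma\cap\Phi_+}\alpha(\lambda+\hat\rho)^{m_\alpha}$ is a non-constant polynomial of positive total degree, it grows without bound as $\lambda$ runs to infinity along any ray in the cone generated by $\Lambda_+^K$. On the other hand $c(\lambda+\hat\rho) = \vert\langle v_\lambda, v_\lambda^K\rangle\vert^2 \leq 1$ by Cauchy--Schwarz applied to unit vectors, so the product $\hat P(\lambda+\hat\rho)^{1/2}\,c(\lambda+\hat\rho)$ is itself unbounded in $\lambda$ and cannot equal a constant. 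The only point that warrants any caution is verifying that $c(\lambda+\hat\rho)$ does not conspire to kill the polynomial growth of $\hat P^{1/2}$; the uniform bound $c \leq 1$ disposes of this possibility immediately, but a Gindikin--Karpelevich style lower bound on $c$ would be available if a sharper estimate were needed.
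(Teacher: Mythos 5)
Your reduction is the same as the paper's: by $U$-equivariance and Schur's lemma the question localizes to a single scalar on each isotypical component, and Theorem \ref{notunitary} identifies the limiting squared-norm ratio as $C_0\,\hat P(\lambda+\hat\rho)^{1/2}c(\lambda+\hat\rho)$, so everything comes down to showing that this quantity is a non-constant function of $\lambda\in\hat U_K$. The paper treats that last point as immediate (``related by $\lambda$-dependent constants''); your proposal is the place where an actual argument for it is attempted, and that argument does not work.

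The gap is in the final step: from $\hat P(\lambda+\hat\rho)^{1/2}\to\infty$ and $c(\lambda+\hat\rho)\le 1$ you conclude that the product is unbounded. An upper bound on $c$ only yields an upper bound on the product; it cannot rule out that $c$ decays fast enough to cancel the polynomial growth of $\hat P^{1/2}$, which is exactly the scenario you flag and then dismiss. This scenario is not hypothetical: by the Gindikin--Karpelevich/Plancherel asymptotics the Harish--Chandra $c$-function decays polynomially in $\lambda$ at a rate comparable to a negative power of $\hat P(\lambda+\hat\rho)$, so $\hat P(\lambda+\hat\rho)^{1/2}c(\lambda+\hat\rho)$ is \emph{not} unbounded. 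Indeed, in the group case $U/K=(K\times K)/K_{\rm diag}$ this factor is constant in $\lambda$ and the gCST \emph{is} asymptotically unitary up to a constant --- this is precisely the contrast with $T^*U$ drawn in the abstract and in the remark following the theorem --- so your argument, applied verbatim there, would prove a false statement. What the theorem actually requires is that $\hat P(\lambda+\hat\rho)^{1/2}c(\lambda+\hat\rho)$ takes at least two distinct values on $\hat U_K$ for the symmetric spaces in question, and establishing that needs quantitative control of $c$ (for instance the explicit Gindikin--Karpelevich product formula, or a direct comparison of the value at $\lambda=0$, where $c=1$, with the value at a nontrivial spherical weight), not merely the bound $c\le 1$.
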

{}

Note, however, that for Mabuchi rays generated by quadratic Hamiltonians, the Laplace approximation used in the proof of Theorem \ref{notunitary} is exact so that, for strictly positive time, the corresponding  gCST are unitary, that is we have

\begin{proposition}Let $h\in \Conv^\infty_{\rm unif}(\uu^\ast)^{\Ad^\ast_U}$ be quadratic and $s>0$. Then, the $U$-equivariant maps 
$$
C_{t,h}: \mathcal{H}_{sh} \to \mathcal{H}_{(s+t)h}
$$
are unitary for $t> -s$.    
\end{proposition}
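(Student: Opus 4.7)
The plan is to reduce unitarity to an equality of norms on each $U$-isotypical component, and then to deduce this equality from the corresponding (known) statement on $T^*U$ by K\"ahler reduction. By $U$-equivariance of $C_{t,h}$, distinct $U$-isotypical components of $\mathcal{H}_{sh}$ and $\mathcal{H}_{(s+t)h}$ remain orthogonal; on each $\mathcal{H}_{rh}^\lambda$ the $U$-invariant inner product is determined up to rescaling by the single norm $\|\sigma^{rh}_{\lambda,(v_\lambda^K)^*}\|^2_{rh}$; and since $C_{t,h}\sigma^{rh}_{\lambda,v^*} = \sigma^{(s+t)h}_{\lambda,v^*}$, unitarity reduces to the identity
\[
\|\sigma^{(s+t)h}_{\lambda,v^*}\|^2_{(s+t)h} = \|\sigma^{sh}_{\lambda,v^*}\|^2_{sh}
\]
for every $\lambda\in\hat U_K$ and every $v^*\in V_\lambda^*$.

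My first choice of argument is a descent from $T^*U$. By Theorem \ref{th-4}, the K\"ahler structure $\widehat I_{rh}$ on $T^*(U/K)$ is the K\"ahler reduction of $I_{rh}$ on $T^*U$, and by the quantization-commutes-with-reduction principle of Guillemin-Sternberg, the half-form corrected Hilbert space $\mathcal{H}_{rh}$ embeds as the $K$-invariant subspace of the analogous K\"ahler quantization of $T^*U$; the gCST $C_{t,h}$ on $T^*(U/K)$ is the restriction of the corresponding $(U\times U)$-equivariant gCST on $T^*U$. For quadratic $h$, the latter is the generalized Hall coherent state transform studied in \cite{kirwin.mourao.nunes:2013}, and it is shown there to be unitary between half-form corrected K\"ahler Hilbert spaces at any two strictly positive times $s$ and $s+t$; the exponential prefactor $e^{-rh(\lambda+\hat\rho)}$ built into (\ref{sinatra}) is tuned precisely so that parallel transport along the Mabuchi ray preserves norms on each $U$-isotypical component, not merely up to constants. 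Restricting to $K$-invariants then yields the unitarity of $C_{t,h}\colon\mathcal{H}_{sh}\to\mathcal{H}_{(s+t)h}$.

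Alternatively, one can verify the norm equality directly by retracing the computation of $\|\sigma^{rh}_{\lambda,(v_\lambda^K)^*}\|^2_{rh}$ in the proof of Theorem \ref{notunitary}, but now at finite $r>0$ rather than in the limit $r\to+\infty$. For quadratic $h$, the operator $e^{t\hat h}$ appearing in $C_{t,h}=e^{t\hat h}\circ e^{-t\mathcal{Q}(h)}$ acts on holomorphic sections as a heat-type operator whose action on the frame $\sigma^{sh}_{\lambda,v^*}$ is expressible in closed form, and the Gaussian phase structure of the resulting integral representation lets one match the $s$- and $(s+t)$-expressions exactly rather than only to leading Laplace order.

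The main obstacle in either approach is tracking the half-form contribution: in the reduction approach one must verify that the half-form bundle on $T^*(U/K)$ constructed in Section \ref{subsec-half}, with trivializing section $\tilde\Omega_\infty^{1/2}$ and the Liouville-type measure coming from $\varepsilon$, corresponds under the Kempf-Ness inclusion $U\times\ss^*\hookrightarrow T^*U$ precisely to the reduction of the $U_\CC$-invariant half-form bundle on $T^*U$, with no residual weight-dependent rescaling; in the direct approach one must maintain exact (not merely asymptotic) control of the $r$-dependence of the half-form measure $\eta(rd_{\xi_+}h)\det(\mathrm{Hess}_{rh}|_{\ss^*})^{1/2}$ and of the polynomial factor $\hat P(\xi_+)$ at the critical point of the quadratic phase. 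Once this compatibility is in place, the proposition follows.
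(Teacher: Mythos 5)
Your reduction of unitarity to the single norm identity $\Vert\sigma^{(s+t)h}_{\lambda,v^*}\Vert^2_{\mathcal{H}_{(s+t)h}}=\Vert\sigma^{sh}_{\lambda,v^*}\Vert^2_{\mathcal{H}_{sh}}$ on each isotypical component is exactly the right setup, and your second, ``alternative'' argument is in fact the paper's own proof: the paper justifies the proposition in a single sentence, by observing that for quadratic $h$ the Laplace approximation carried out in the proof of Theorem \ref{notunitary} is exact, so that the norm $\Vert\sigma^{rh}_{\lambda,(v_\lambda^K)^*}\Vert^2_{\mathcal{H}_{rh}}$ is independent of $r$ for $r>0$ and the constants $a_{t,h,\lambda}$ relating the isotypical components all equal $1$. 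If you commit to that branch you are on the paper's route (and supplying more detail than the paper does).

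Your primary route, descent from $T^*U$, is where I would push back. The Guillemin--Sternberg identification of $\mathcal{H}_{rh}$ with the $K$-invariant part of the quantization of $(T^*U, I_{rh})$ is a linear isomorphism, not an isometry: the intrinsic half-form corrected norm on $T^*(U/K)$ integrates over $\ss^*$ against the density $\eta(r\,d_{\xi_+}h)\det(\mathrm{Hess}_{rh}\vert_{\ss^*})^{1/2}$, while the upstairs norm integrates over all of $\uu^*$ against the $T^*U$ density, and the discrepancy is a factor depending on both $\lambda$ and $r$. That such comparison factors are genuinely representation dependent is the central point of Theorems \ref{notunitary} and \ref{thm_asympnot} --- the $c$-function and $\hat P(\lambda+\hat\rho)$ have no analogue on $T^*U$ --- so unitarity upstairs does not descend unless one proves the comparison factor is $r$-independent, and proving that is essentially the same computation as the direct one. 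You flag this as ``the main obstacle'' but leave it open, so as written the first branch is not a proof; I would demote it to a remark and carry out the finite-$r$ computation instead. There the one genuinely nontrivial verification is the one you already name at the end of your proposal: for quadratic $h$ the exponential factors assemble into an exact Gaussian peaked at $\xi_+=\lambda+\hat\rho$, and one must check that the non-Gaussian prefactors $\eta(r\,d_{\xi_+}h)\det(\mathrm{Hess}_{rh}\vert_{\ss^*})^{1/2}\hat P(\xi_+)$ conspire to make the full integral independent of $r$.
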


Thus, the inner product structure on $\mathcal{H}_\infty$ can be obtained through the continuous family of unitary gCST maps for the Mabuchi geodesic going through quadratic Hamiltonians. For geodesics generated by non-quadratic Hamiltonians one obtains only asymptotic unitarity of the 
gCST maps $C_{t,h}$. Moreover, the $U$-invariant inner product structure for 
$\mathcal{H}_\mathrm{Sch}$ is related to the inner products for the Hilbert spaces of the K\"ahler polarizations and, hence also to the inner product on $\mathcal{H}_\infty$, through 
representation-dependent factors in the $U$-isotypical decompositions.

\begin{remark} 
     In the case of $T ^*U$ \cite{baier.hilgert.kaya.mourao.nunes:2023}, the inner product for the limit Kirwin-Wu polarization is also induced asymptotically by taking the inner product along Mabuchy rays. By contrast, however, one can take the rays to begin at the vertical polarization, that is, in the case of $T^*U$, the inner products for $\mathcal{H}_\mathrm{Sch}$, for the K\"ahler polarizations $\mathcal{H}_g,\, g\in \Conv^\infty_{\rm unif}(\uu^\ast)^{\Ad^\ast_U}$, and for $\mathcal{H}_\mathrm{KW}$ are related $t,h$-dependent  overall factors which do not vary along the $U$-isotypical components. (In the case of Mabuchi rays of quadratic Hamiltonians, the gCST provides a continuous family of unitary maps from $\mathcal{H}_\mathrm{Sch}$ to $\mathcal{H}_\mathrm{KW}$.)
\end{remark}

\subsection{The extended quantum bundle}
\label{subsection_flatconn}

By including the fibers corresponding to the vertical polarization and to $\mathcal{P}_\infty$, 
 one then obtains an extended bundle of quantization Hilbert spaces 
 $$
 \mathcal{H}\to \Conv^\infty_{\rm unif}(\uu^\ast)^{\Ad^\ast_U} \cup \left\{0,\infty\right\}
 $$ 
 with fibers $\mathcal{H}_g$ over $g\in \Conv^\infty_{\rm unif}(\uu^\ast)^{\Ad^\ast_U}$, 
 $\mathcal{H}_0=\mathcal{H}_\mathrm{Sch}$ over $0$ and $\mathcal{H}_\infty$ over $\infty.$ 
As described in the previous sections, $\mathcal H$  comes equipped with a global trivializing frame
$$
\left\{ \sigma_{\lambda, v^*}^{g}\mid \lambda\in \hat U_K, v^*\in V_\lambda^*, g\in \left\{0\right\}\cup \Conv^\infty_{\rm unif}(\uu^\ast)^{\Ad^\ast_U} \cup \left\{\infty \right\}\right\},
$$
which is parallel with respect to the flat connection $\nabla^Q$ extended to $\mathcal{H}$.

The gCST maps $C_{t,h}, t\geq 0, h\in \Conv^\infty_{\rm unif}(\uu^\ast)^{\Ad^\ast_U},$ give $U$-equivariant isomorphisms between the fibers of $\mathcal{H}$ and define the parallel transport operators of $\nabla^Q$. If one excludes the fiber corresponding to the vertical polarization, the connection $\nabla^Q$ is unitary along the Mabuchi rays going through quadratic Hamiltonians while for other rays one obtains asymptotic unitarity at infinite geodesic time.

{}
\subsection{Comparing $C_{\infty,h}$ with the unitary Fourier transform}
\label{subsection_fouriertransform}

As in the case of $T^*U$, parallel transport in the extended quantum bundle along the geodesic paths $th, h\in \Conv^\infty_{\rm unif}(\uu^\ast)^{\Ad^\ast_U}, t\geq 0,$
gives an $U$-equivariant isomorphism defined by the linear extension of 
$$
L^2(U/K,dx) \cong \mathcal{H}_{\mathrm{Sch}} \ni 
 \tr (\pi_\lambda (x) v_\lambda^K\otimes v^*) \otimes \sqrt{dx} \mapsto \sigma^\infty_{\lambda, v^*} \in \mathcal{H}_\infty,\,\, \lambda\in \hat U_K, v^*\in V_\lambda^*.
$$

Clearly we have an $U$-equivariant isomorphism
$$
\Phi_\mathrm{GQ}:\mathcal{H}_\infty \cong \widehat{\bigoplus_{\lambda\in \hat U_K}}  V_\lambda^*
$$
with
$$
\Phi_\mathrm{GQ} (\sigma^\infty_{\lambda, v^*}) = v^*.
$$

From above, this defines a unitary (up to a constant) vector valued Fourier transform with respect to the inner product on 
$\widehat{\bigoplus_{\lambda\in \hat U_K}}  V_\lambda^*$ such that, for a basis  of $V_\lambda$, $\left\{e_j, j=1, \dots, d_\lambda\right\}$, orthonormal for a choice of $U$-invariant inner product,
$$
\vert\vert e_j^*\vert\vert_{\mathrm{GQ}}= \left(\frac{\hat P (\lambda+\hat\rho)^\frac12 c(\lambda+\rho)}{d_\lambda}\right)^{-\frac12}.
$$

The standard unitary vector-valued Fourier transform for compact symmetric spaces (see \cite{helgason:1984} and Appendix A) 
$$
\mathcal{F}: L^2(U/K) \to \widehat{\bigoplus_{\lambda\in \hat U_K}} V_\lambda\cong \widehat{\bigoplus_{\lambda\in \hat U_K}} V_\lambda^*
$$
can be defined by setting 
$$
L^2(U/K) \ni \sqrt{d_\lambda} \tr (\pi_\lambda (x) v_\lambda^K\otimes v^*)
\mapsto v \in V_\lambda.
$$
$\mathcal{F}$ is a unitary isomorphism of Hilbert spaces. 

Therefore, unlike what happens for the case of $T^*U$, the vector-valued Fourier transform induced by geometric quantization along the Mabuchi rays of $U$-invariant K\"ahler structures that we describe in this paper is not identical to the standard Fourier transform. That is, if one considers the natural inner products making all the arrows in the following diagram unitary maps,
\begin{equation*}
\begin{tikzcd} 
	L^2(U/K) \ar[r]\ar[d,"\mathcal F"]	& \mathcal H_\mathrm{Sch}\ar[dd]\\
	\widehat{\bigoplus}_{\lambda\in\hat U_K} V_\lambda\ar[d]	 &\\
	\widehat{\bigoplus}_{\lambda\in\hat U_K} V_\lambda^*&
    \mathcal H_\infty\ar[l,"\Phi_\mathrm{GQ}"]
\end{tikzcd}
\end{equation*}
then, the diagram commutes only up to multiplication by the representation dependent factor $\hat P(\lambda+\hat\rho)^{-\frac12} c(\lambda + \hat\rho)^{-1}$, along the isotypical component for $\lambda\in \hat U_K$, which is necessary to make $\Phi_\mathrm{GQ}$ unitary with respect to the inner product structure on $\mathcal{H}_\infty$ which is, as we described, induced by taking a limit of the half-form corrected inner products for the K\"ahler quantizations along Mabuchi geodesics connecting the vertical polarizations and $\mathcal{P}_\infty.$

{}
\subsection{Quantum-geometric interpretation of $\mathcal{P}_\infty$}
\label{subsection_interpretation}

The limit polarization $\mathcal{P}_\infty$ has an important quantum-geometric interpretation, in line with the general program outlined in Section 7 of \cite{baier.hilgert.kaya.mourao.nunes:2023}.
The Bohr-Sommerfeld set of $\mathcal{P}_\infty$, as we have seen, is given by 
$$
\bigcup_{\lambda\in \hat U_K} \mu_\mathrm{inv}^{-1}(\lambda+\hat\rho),
$$
where these level sets were called ``spectral manifolds" 
in \cite{baier.hilgert.kaya.mourao.nunes:2023}. In the limit polarization $\mathcal{P}_\infty$, the quantization of the coordinate components of $\mu_\mathrm{inv}$ is indeed given just by multiplication operators with spectrum determined by evaluation at $\lambda+\hat\rho,\, \lambda\in \hat U_K.$

From Section \ref{sec-fibering} and Proposition \ref{prop-holomorphicdirections}, the coresponding symplectic reductions for the Hamiltonian action of $T_\mathrm{inv}$ on $T^*(U/K)_\mathrm{reg}$ give coadjoint orbits 
$$
\mu_{\mathrm{inv}}^{-1} (\lambda+\hat\rho) / T_\mathrm{inv} \cong 
\mathcal{O}_{\lambda+\hat\rho}.
$$

Let 
$$
\mathcal{H}_\infty^\lambda := \oplus_{v^*\in V_\lambda^*} \langle \sigma^\infty_{\lambda, v^*}\rangle_\CC \subset \mathcal{H}_\infty, \, \, \lambda\in \hat U_K .
$$
{}
From Proposition \ref{prop-holomorphicdirections} and (\ref{007}), for $\lambda\in \hat U_K,$ we obtain a natural $U$-equivariant linear isomorphism 
$$
H^0(\mathcal{O}_\lambda, L_\lambda) \cong \mathcal{H}_\infty^\lambda,
$$
identifying the quantization of the (integral) coisotropic reductions of $\mathcal{P}_\infty$ with subspaces of $\mathcal{H}_\infty$, so that 
$$
\mathcal{H}_\infty = \overline{\oplus_{\lambda\in \hat U_K} \mathcal{H}_\infty^\lambda} \cong 
\overline{\oplus_{\lambda\in \mu_\mathrm{inv}(T^*(U/K)_\mathrm{reg})\cap \Lambda_+^K} H^0(\mathcal{O}_\lambda, L_\lambda)}.
$$

Thus, as described in Section 7 of \cite{baier.hilgert.kaya.mourao.nunes:2023}, the Hilbert space for the quantization in the limit polarization $\mathcal{P}_\infty$ ``decomposes" as a sum of the holomorphic quantizations of the symplectic reductions, with respect to the Hamiltonian action of $T_\mathrm{inv}$, of the spectral manifolds given by the components of the Bohr-Sommerfeld set for $\mathcal{P}_\infty.$
{}

\appendix

\section{The Fourier transform}\label{appendix}

\cite[Thm.~V.4.3]{helgason:1984} shows that one has a unitary vector valued Fourier transform 
\[\mathcal F: L^2(U/K)\to \widehat{\bigoplus_{\lambda\in\hat U_K}} V_\lambda,\]
defined by 
\[\mathcal F f=\sum_{\lambda\in\hat U_K} d_\lambda\int_U \overline{\chi_\lambda}(u) L_uf\, du,\]
where $L_u$ is the left regular representation of $U$ on $L^2(U/K)$. Here we use the isometric embeddings 
\[V_\lambda\to L^2(U/K),\quad v\mapsto \sqrt{d_\lambda}f_v\]
where
\[f_{\lambda,v}(u):=\langle v\mid \pi_\lambda(u) v_\lambda^K\rangle_{V_\lambda}\]
and $\langle\cdot\mid \cdot\rangle_{V_\lambda}$ denotes the inner product on $V_\lambda$. Note that 
\[\mathcal Ff_{\lambda,v}=f_{\lambda,v}\]

Let $V_\lambda^*$ be the complex dual of $V_\lambda$. The characterization of the spherical dual $\hat U_K$ given in \cite[Thm. V.4.1]{helgason:1984} shows that the contragredient representation $(\tilde \pi_\lambda, V_\lambda^*)$ of $(\pi_\lambda,V_\lambda)$ is spherical as well. Recall that
\[\langle\tilde \pi_\lambda(u)\nu,v\rangle=\langle\nu,\tilde \pi_\lambda(u^{-1})v\rangle\]
for $U\in U, v\in V_\lambda, \nu\in V_\lambda^*$ and $\langle\cdot,\cdot\rangle$ denoting the natural pairing of $V_\lambda^*$ and $V_\lambda$. Writing $\langle w\mid v\rangle_{V_\lambda}=\langle v^*,w\rangle$ then defines an anti-linear bijection 
\[V_\lambda\to V_\lambda^*,\quad v\mapsto v^*\] 
which is $U$-equivariant with respect to $\pi_\lambda$ and $\tilde\pi_\lambda$. We equip $V_\lambda^*$ with the inner product making this map an isometry, i.e.
\[\langle v^*\mid w^*\rangle_{V_\lambda^*}=\langle w\mid v\rangle_{V_\lambda}=\langle v^*, w\rangle.\]
Collecting these maps for all $\lambda\in \hat U_K$ we obtain an anti-linear bijective isometry 
\[\widehat{\bigoplus_{\lambda\in\hat U_K}} V_\lambda\longrightarrow\widehat{\bigoplus_{\lambda\in\hat U_K}} V_\lambda^*.\]
In view of 
\[\langle v\mid \pi_\lambda(u) v_\lambda^K\rangle_{V_\lambda}
=\overline{\langle \pi_\lambda(u) v_\lambda^K\mid  v\rangle_{V_\lambda}}
= \overline{\langle  v^* , \pi_\lambda(u) v_\lambda^K\rangle } \]
we have isometric embeddings  
\[V_\lambda^*\to L^2(U/K),\quad v^*\mapsto \sqrt{d_\lambda}f_{\lambda,v^*}\]
where
\[f_{\lambda,v^*}(u):=\langle v^*, \pi_\lambda(u) v_\lambda^K\rangle\]

\bigskip
\bigskip
\bigskip
{\bf Acknowledgements:} TB, JM and JN were supported by the projects UIDB/04459/2020 and UIDP/04459/2020. The research of ACF was financed by Portuguese Funds through FCT (Fundação para a Ciência e a Tecnologia, I.P.) within the Projects UIDB/00013/2020 and UIDP/00013/2020. JH was partially funded by the Deutsche Forschungsgemeinschaft (DFG, German Research Foundation) – Project-ID 491392403 – TRR 358.

\begingroup
\sloppy
%\raggedright
\printbibliography
\endgroup

\bigskip

%\bibliography{bhkmn_biblio}
%\bibliographystyle{alpha}

\end{document}